\def\pg{\mbox{\rm PG}}
\def\gf{\mbox{\rm GF}}
\def\q{\mbox{\rm Q}}
\def\h{\mbox{\rm H}}
\def\w{\mbox{\rm W}}
\newcommand{\erz}[1]{\langle #1 \rangle}
\def\C{\mathcal{C}}
\def\S{\mathcal{S}}
\def\P{\mathcal{P}}
\def\B{\mathcal{B}}
\def\G{\mathcal{G}}
\def\I{\mathrel{\mathrm I}}
\def\P{\mathcal{P}}
\def\L{\mathcal{L}}
\def\M{\mathcal{M}}
\def\Q{\mathcal{Q}}
\def\R{\mathcal{R}}
\def\IS{\S=(\P,\G,\I)}
\def\ISp{\S'=(\P',\G',\I')}
\newtheorem{Th}{Theorem}[section]
\newtheorem{Le}[Th]{Lemma}
\newtheorem{Co}[Th]{Corollary}
\newtheorem{Pro}[Th]{Proposition}
\begin{document}
\title{Sets of generators blocking all generators in finite classical polar
spaces}
\author{Jan De Beule\thanks{This author is a Postdoctoral Fellow of the
Research Foundation Flanders (FWO) (Belgium)}, Anja Hallez, Klaus Metsch and Leo Storme\thanks{This author is
an Alexander von Humboldt Fellow, and wishes to thank the Alexander von Humboldt
Foundation for granting him this fellowship}} 
\date{}

\maketitle

\begin{abstract}
We introduce generator blocking sets of finite classical polar spaces. These
sets are a generalisation of maximal partial spreads. We prove a
characterization of these minimal sets of the polar spaces $\q(2n,q)$,
$\q^-(2n+1,q)$ and $\h(2n,q^2)$, in terms of cones with vertex a subspace
contained in the polar space and with base a generator blocking set in a polar
space of rank 2.
\end{abstract}

{\bf keywords:} partial spreads, blocking sets, finite classical polar spaces.\\
{\bf MSC 2010:} 51E20, 51E21.

\noindent {\bf Affiliations:} Jan De Beule, Anja Hallez and Leo Storme: Ghent University, Department of Mathematics, Krijgslaan 281, S22, B-9000, Gent, Belgium. \url{jdebeule@cage.ugent.be}, \url{athallez@cage.ugent.be}, \url{ls@cage.ugent.be}\\
\noindent Klaus Metsch: Universit\"at Gie\ss{}en, Mathematisches Institut, Arndtstra\ss{}e 2, D-35392, Gie\ss{}en, Germany. \url{Klaus.Metsch@math.uni-giessen.de}

\section{Introduction and definitions}
Consider the projective space $\pg(3,q)$. It is well known that a line of
$\pg(3,q)$ is the smallest blocking set with relation to the planes of
$\pg(3,q)$. It is also well known that any blocking set $\B$ with relation to
the planes, such that $|\B| < q+\sqrt{q}+1$, contains a line (\cite{Bruen1970}).

Consider now any symplectic polarity $\varphi$ of $\pg(3,q)$. The points of
$\pg(3,q)$, together with the totally isotropic lines with relation to
$\varphi$, constitute the generalized quadrangle $\w(3,q)$. If $\B$ is a
blocking set with relation to the planes of $\pg(3,q)$, then $\B$ is a set of
points of $\w(3,q)$ such that on any point of $\w(3,q)$ there is at least one
line of $\w(3,q)$ meeting $\B$ in at least one point. Dualizing to the
generalized quadrangle $\q(4,q)$, we find a set $\L$ of lines of $\q(4,q)$ such
that every line of $\q(4,q)$ meets at least one line of $\L$. Together with the
known bounds on blocking sets of $\pg(2,q)$, we observe the following proposition.

\begin{Pro}\label{pro:q4q}
Suppose that $\L$ is a set of lines of $\q(4,q)$ with the property that every
line of $\q(4,q)$ meets at least one line of $\L$. If $|\L|$ is smaller than the
size of a non-trivial blocking set of $\pg(2,q)$, then $\L$ contains the pencil of
$q+1$ lines through a point of $\q(4,q)$ or $\L$ contains a regulus contained in
$\q(4,q)$.
\end{Pro}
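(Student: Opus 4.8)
The plan is to translate the statement, via the duality described in the introduction, into a question about blocking sets with respect to the planes of $\pg(3,q)$, and then to apply the quoted result on such blocking sets. Reading the correspondence between $\q(4,q)$ and $\w(3,q)$ in the reverse direction, the lines of $\L$ become points of $\w(3,q)$, that is, points of $\pg(3,q)$, forming a set $\B$ with $|\B|=|\L|$; two lines of $\q(4,q)$ meet precisely when the two corresponding points are collinear in $\w(3,q)$, i.e.\ conjugate under the symplectic polarity $\varphi$ defining $\w(3,q)$. The hypothesis that every line of $\q(4,q)$ meets a line of $\L$ thus says that every point $P$ of $\pg(3,q)$ is conjugate to some point of $\B$, i.e.\ $\varphi(P)\cap\B\neq\emptyset$. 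Since $\varphi$ maps the points of $\pg(3,q)$ bijectively onto its planes, this is exactly the statement that $\B$ is a blocking set with respect to the planes of $\pg(3,q)$.

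First I would invoke the bound. As $|\B|=|\L|$ is smaller than the size of a smallest non-trivial blocking set of $\pg(2,q)$ (the threshold $q+\sqrt{q}+1$ recalled in the introduction), the cited result forces $\B$ to contain a line $m$ of $\pg(3,q)$. It then remains to read the $q+1$ points of $m$ back as $q+1$ generators of $\q(4,q)$ and to identify the configuration they form; this identification is the real content of the proposition and splits according to whether $m$ is totally isotropic for $\varphi$.

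If $m$ is totally isotropic, its $q+1$ points are mutually conjugate, hence mutually collinear in $\w(3,q)$, so the corresponding $q+1$ generators of $\q(4,q)$ pairwise meet. Since a generalized quadrangle contains no triangles, three or more pairwise concurrent lines pass through a common point, so these generators are exactly the pencil of all $q+1$ lines of $\q(4,q)$ through one point, and this pencil lies in $\L$. If $m$ is not totally isotropic, then its polar line $m^{\perp}=\varphi(m)$ is a line skew to $m$, no two points of $m$ are conjugate, and every point of $m$ is conjugate to every point of $m^{\perp}$. Dually the $q+1$ generators arising from $m$ are mutually skew, while each of them meets each of the $q+1$ generators arising from $m^{\perp}$; these $2(q+1)$ lines lie on the hyperbolic quadric $\q^{+}(3,q)$ cut from $\q(4,q)$ by a suitable hyperplane and constitute its two reguli, so the generators coming from $m$ form a regulus $\R$ contained in $\q(4,q)$, with $\R\subseteq\L$.

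The main obstacle is not any counting but the geometric bookkeeping of this last step: one must make the duality between $\q(4,q)$ and $\w(3,q)$ explicit enough to be certain that a totally isotropic line maps to a pencil and a non-isotropic line maps to a \emph{regulus}, rather than to some other family of $q+1$ mutually skew generators. The cleanest way to secure this, which I would write out carefully, is to use $m^{\perp}$ as a set of common transversals: the incidence pattern that each generator from $m$ meets each generator from $m^{\perp}$ and that there are no further incidences forces the two families to be the two reguli of a single $\q^{+}(3,q)$, which is precisely the meaning of a regulus contained in $\q(4,q)$. A secondary point to verify is that the threshold phrased as the size of a non-trivial blocking set of $\pg(2,q)$ is indeed the right one for the line-containment result in $\pg(3,q)$, but this is exactly the reduction from $\pg(3,q)$ to $\pg(2,q)$ already recalled in the introduction.
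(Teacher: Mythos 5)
Your proposal is correct and takes essentially the same route as the paper: the paper's (implicit) proof of this proposition is exactly the dualization, via the symplectic polarity, of $\L$ to a blocking set $\B$ with respect to the planes of $\pg(3,q)$, the application of the Bruen-type result to find a line $m$ inside $\B$, and the translation of the two cases ($m$ totally isotropic versus $m$ hyperbolic) back into a pencil, respectively a regulus, of $\q(4,q)$. You simply make explicit the transversal/grid argument identifying the regulus and the no-triangle argument identifying the pencil, details the paper leaves as an observation, and you correctly flag the only delicate point (that the threshold is the size of the smallest non-trivial planar blocking set, not just Bruen's $q+\sqrt{q}+1$), a looseness already present in the paper itself.
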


This proposition motivates the study of small sets of generators of three
particular finite classical polar spaces, meeting every generator. In this
section, we define generalized quadrangles and describe briefly the finite
classical polar spaces, and we state the main theorems to be proved in the
paper.

A (finite) \emph{generalized quadrangle} (GQ) is an
incidence structure $\IS$ in which $\P$ and $\G$ are disjoint non-empty sets of
objects called {\em points} and {\em lines} (respectively), and for which $\I
\subseteq (\P \times \G) \cup (\G \times \P)$ is a symmetric point-line
incidence relation satisfying the following axioms:
\begin{compactenum}[\rm (i)]
\item Each point is incident with $1+t$ lines $(t \geq 1)$ and
two distinct points are incident with at most one line.
\item Each line is incident with $1+s$ points $(s \geq 1)$ and
two distinct lines are incident with at most one point.
\item If $X$ is a point and $l$ is a line not incident with $x$,   
then there is a unique pair $(Y,m) \in \P \times \G$ for which $X \I m \I 
Y \I l$. 
\end{compactenum}
The integers $s$ and $t$ are the {\em parameters} of the GQ and $\S$ is said to
have {\em order} $(s,t)$. If $\IS$ is a GQ of order $(s,t)$, we say that $\ISp$
is a {\em subquadrangle} of order $(s',t')$ if and only if  $\P' \subseteq \P$,
$\G' \subseteq \G$, and $\ISp$ is a generalized quadrangle with $\I'$ the
restriction of $\I$ to $\P' \times \G'$.

The {\em finite classical polar spaces} are the geometries consisting of the
totally isotropic, respectively, totally singular, subspaces of non-degenerate
sesquilinear, respectively, non-degenerate quadratic forms on a projective space
$\pg(n,q)$. So these geometries are the non-singular symplectic polar spaces
$\w(2n+1,q)$, the non-singular parabolic quadrics $\q(2n,q)$, $n\geq 2$, the
non-singular elliptic and hyperbolic quadrics $\q^-(2n+1,q)$, $n\geq 2$, and
$\q^+(2n+1,q)$, $n\geq 1$, respectively, and the non-singular hermitian
varieties $\h(d,q^2)$, $d\geq 3$. For $q$ even, the parabolic polar space
$\q(2n,q)$ is isomorphic to the symplectic polar space $\w(2n-1,q)$. For our
purposes, it is sufficient to recall that every non-singular parabolic quadric
in $\pg(2n,q)$ can, up to a coordinate transformation be described as the set of
projective points satisfying the equation $X_0^2+X_1X_2+\ldots+X_{2n-1}X_{2n} =
0$. Every non-singular elliptic quadric of $\pg(2n+1,q)$ can up to a coordinate
transformation be described as the set of projective points
satisfying the equation $g(X_0,X_1)+X_2X_3+\ldots+X_{2n}X_{2n+1} = 0$,
$g(X_0,X_1)$ an irreducible homogeneous quadratic polynomial over $\gf(q)$.
Finally, the hermitian variety $\h(n,q^2)$ can up to a coordinate
transformation be described as the set of projective points
satisfying the equation $X_0^{q+1}+X_1^{q+1}+\ldots + X_n^{q+1} = 0$.

The {\em generators} of a classical polar space are the projective subspaces of
maximal dimension completely contained in this polar space. If the generators
are of dimension $r-1$, then the polar space is said to be of {\em rank} $r$.

Finite classical polar spaces of rank $2$ are examples of generalized
quadrangles, and are called {\em finite classical generalized quadrangles}.
These are the non-singular parabolic quadrics $\q(4,q)$, the
non-singular elliptic quadrics $\q^-(5,q)$, the non-singular hyperbolic quadrics
$\q^+(3,q)$, the non-singular hermitian varieties $\h(3,q^2)$ and $\h(4,q^2)$,
and the symplectic generalized quadrangles $\w(3,q)$ in $\pg(3,q)$. The
GQs $\q(4,q)$ and $\w(3,q)$ are dual to each other, and have both order $(q,q)$.
The GQs $\q(4,q)$ and $\w(3,q)$ are self-dual if and only if $q$ is even.
Finally, the GQs $\h(3,q^2)$ and $\q^-(5,q)$ are also dual to each other, and
have respective order $(q^2,q)$ and $(q,q^2)$. The GQ $\h(4,q^2)$ has order
$(q^2,q^3)$, and the GQ $\q^+(3,q)$ has order $(q,1)$. By taking hyperplane
sections in the ambient projective space, it is clear that $\q^+(3,q)$ is a
subquadrangle of $\q(4,q)$, that $\q(4,q)$ is a subquadrangle of $\q^-(5,q)$,
and that $\h(3,q^2)$ is a subquadrangle of $\h(4,q^2)$. These well known facts
can be found in e.g. \cite{PT2009}.

Consider a finite classical polar space $\S$ of rank $r \geq 2$. A set $\L$ of
generators of $\S$ is called a {\em generator blocking set} if it has the
property that every generator of $\S$ meets at least one element of $\L$
non-trivially. We generalize this definition to non-classical GQs, and we say
that $\L$ is a generator blocking set of a GQ $\S$ if $\L$ has the property that
every line of $\S$ meets at least one element of $\L$. Clearly, for finite
classical generalized quadrangles, both definitions coincide. Suppose that $\L$
is a generator blocking set of a finite classical polar space, respectively a
GQ. We call an element $\pi$ of $\L$ {\em essential} if and only if there exists
a generator, respectively line, of $\S$ not in $\L$, meeting no element of $\L
\setminus \{\pi\}$. We call $\L$ {\em minimal} if and only if all of its
elements are essential.

A {\em spread} of a finite classical polar space is a set $\C$ of generators
such that every point is contained in exactly one element of $\C$. Hence the generators
in the set $\C$ are pairwise disjoint. A {\em cover} is a set $\C$ of generators such that
every point is contained in at least one element of $\C$. Hence a spread is a cover
consisting of pairwise disjoint generators. From the definitions,
it follows that spreads and covers are particular examples of generator blocking
sets.

In this paper, we will study small generator blocking sets of the polar spaces
$\q(2n,q)$, $\q^-(2n+1,q)$ and $\h(2n,q^2)$, $n \geq 2$, all of rank $n$. The
following theorems, inspired by Proposition~\ref{pro:q4q}, will be proved in
Section~\ref{sec:rank2}.

\begin{Th}\label{th:rank2}
Let $\L$ be a generator blocking set of a finite generalized quadrangle of order
$(s,t)$, with $|\L| = t+1$. Then $\L$ is the pencil of $t+1$ lines
through a point, or $t \geq s$ and $\L$ is a spread of a subquadrangle of order
$(s,t/s)$.
\end{Th}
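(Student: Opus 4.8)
The plan is to turn the blocking hypothesis into a statement about how the $t+1$ lines of $\L$ cover the remaining lines of the quadrangle $\S$, dispose of the cheap regime $t<s$ by a direct double count, and then recognise a subquadrangle in the expensive regime $t\ge s$. First I would record the elementary incidence data forced by the axioms: a fixed line carries $s+1$ points and is met by exactly $(s+1)t$ other lines; two disjoint lines have exactly $s+1$ common transversals, while two lines through a common point have exactly $t-1$ (both follow from axiom (iii), since through a point off a line there is a unique line meeting it). A short computation then shows that a fixed line is disjoint from exactly $st^2$ other lines, its \emph{far} lines. The consequence I want is that a line $m$ blocks $st$ of the far lines of a fixed $\ell_1$ if $m$ meets $\ell_1$, and $(s+1)(t-1)$ of them if $m$ is disjoint from $\ell_1$.

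Next, fix $\ell_1\in\L$ and let $a$ be the number of lines of $\L$ that meet $\ell_1$. Every far line of $\ell_1$ which is not itself in $\L$ must be met by one of the other $t$ lines of $\L$. Adding the contributions above, subtracting the $st^2$ far lines, and observing that exactly $t-a$ of the far lines are themselves members of $\L$ (hence need no external block), I obtain the exact identity $\sum_{n}(r(n)-1)=(t-s-1)(t-a)$, where $n$ ranges over the far lines of $\ell_1$ and $r(n)$ is the number of lines of $\L\setminus\{\ell_1\}$ meeting $n$. Since far lines outside $\L$ satisfy $r(n)\ge1$ and the $t-a$ far lines inside $\L$ satisfy $r(n)\ge0$, the left side is at least $-(t-a)$, whence $(t-a)(t-s)\ge0$. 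As $a\le t$, this forces $a=t$ whenever $t<s$. So for $t<s$ every line of $\L$ meets every other; a generalized quadrangle has no triangle, so mutually meeting lines are concurrent, and $|\L|=t+1$ makes $\L$ the full pencil through their common point. In particular, if $\L$ is not a pencil then $t\ge s$.

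It remains to treat $t\ge s$ with $\L$ not a pencil, and this is where the real work lies. The target is to prove that $\L$ is a partial spread whose set of covered points, together with the lines of $\S$ contained in it, is a subquadrangle of order $(s,t/s)$ having $\L$ as a spread; note that $t+1$ pairwise disjoint lines cover exactly $(s+1)(t+1)$ points, which is precisely the number of points of such a subquadrangle, so the two goals are numerically compatible and force $s\mid t$. The obstacle is twofold: first, to exclude \emph{mixed} intersection patterns (some but not all lines concurrent), for which the per-line inequality above is no longer decisive; second, to reconstruct the subquadrangle. For the first step I would push the counting further by summing the per-line identity over all $\ell_1\in\L$, which yields
\[
\sum_{n\notin\L}(t+1-g(n))(g(n)-1)+\sum_{n\in\L}(t-d(n))(d(n)-1)=(t-s-1)\bigl((t+1)t-2c\bigr),
\]
where $g(n)$ and $d(n)$ count the lines of $\L$ meeting $n$, and $c$ is the number of intersecting pairs in $\L$. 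Analysing this as a constraint on the distribution of the multiplicities $g(n)$ should force every line outside $\L$ to meet $\L$ in either exactly one or exactly $t+1$ of its members, the rigidity already visible for a regulus in $\q(4,q)$, where lines off the base quadric meet one spread line and lines of the opposite regulus meet all $t+1$.

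Once $\L$ is known to be a partial spread with this regularity, the covered points together with the lines they fully contain can be checked against axioms (i)--(iii) directly, producing the subquadrangle of order $(s,t/s)$ and exhibiting $\L$ as one of its spreads. I expect the distribution/rigidity step to be the hardest part, and the place most likely to require the standard structure theory of subquadrangles of order $(s,t')$ inside a quadrangle of order $(s,t)$; the two counting identities above are meant to be the engine that drives both the $t<s$ reduction and the exclusion of mixed patterns.
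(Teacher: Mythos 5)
Your handling of the regime $t<s$ is correct and genuinely different from the paper: the far-line counts, the identity $\sum_n(r(n)-1)=(t-a)(t-s-1)$, and the conclusion $(t-a)(t-s)\ge 0$ are all right, and together with triangle-freeness they do force the pencil when $t<s$. But that is the easy half; the substance of the theorem is the case $t\ge s$, and there your proposal is a plan rather than a proof, with two concrete defects. First, the exclusion of mixed intersection patterns (some but not all lines of $\L$ concurrent) is exactly what you leave to ``analysing'' the summed identity; that identity is a single equation on the multiplicity distribution and cannot by itself rule out non-geometric solutions -- some geometric input must be injected, and none is specified. Second, the rigidity you aim the machinery at is false when $t>s$: for a spread $\L$ of a subquadrangle $\q(4,q)$ inside $\q^-(5,q)$ (so $s=q$, $t=q^2$), a line of the subquadrangle not in $\L$ meets exactly $s+1=q+1$ members of $\L$, not $t+1=q^2+1$. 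The dichotomy ``one or $t+1$'' is an artifact of $s=t$ in the regulus example, so the statement your counting engine is supposed to prove is not even true in the generality you need.

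The missing idea -- and it is how the paper argues -- is local and needs only triangle-freeness plus $|\L|=t+1$. If a line $\ell\notin\L$ contains an uncovered point $P_0$, then the $t$ lines on $P_0$ other than $\ell$ must be blocked by $t$ pairwise distinct lines of $\L$ (distinct, since a line of $\L$ meeting two lines through $P_0$ would create a triangle, and none passes through $P_0$), and none of these meets $\ell$; hence $\ell$ meets at most $|\L|-t=1$ line of $\L$, equivalently every line carrying two covered points is entirely covered. The same argument applied at a point lying on two lines of $\L$ shows that \emph{all} $t+1$ lines through that point belong to $\L$, i.e.\ two meeting lines already force the full pencil -- for every $s,t$, with no case split. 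In the non-pencil case the lines of $\L$ are therefore pairwise disjoint, and the covered point set $\M$ together with the set $\G'$ of fully covered lines is a generalized quadrangle of some order $(s,t')$: the unique transversal from a point of $\M$ to a line of $\G'$ has two covered points, hence lies in $\G'$. Counting $|\M|=(t+1)(s+1)=(t's+1)(s+1)$ gives $t'=t/s$, so $t\ge s$ and $\L$ is a spread of this subquadrangle. Without this local step (or a genuine substitute for it), the second half of your proposal does not go through.
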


\begin{Th}\label{th:rank2_gap}
\begin{compactenum}[\rm (a)]
\item Let $\L$ be a generator blocking set of $\q^-(5,q)$, with $|\L| = q^2 +
\delta + 1$. If $\delta \le \frac{1}{2}(3q-\sqrt{5q^2+2q+1})$, then $\L$
contains the pencil of $q^2+1$ generators through a point or $\L$ contains a cover
of $\q(4,q)$ embedded as a hyperplane section in $\q^-(5,q)$.
\item Let $\L$ be a generator blocking set of $\h(4,q^2)$, with $|\L| = q^3 +
\delta + 1$. If $\delta < q-3$, then $\L$ contains the pencil of $q^3+1$
generators through a point.
\end{compactenum}
\end{Th}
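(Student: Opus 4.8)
The plan is to treat both parts by one mechanism: fix a point of maximal line-degree, measure how badly the pencil through it fails to block, and then show that repairing this failure is so expensive that it forces one of the two asserted configurations. Write $(s,t)=(q,q^2)$ for $\q^-(5,q)$ and $(s,t)=(q^2,q^3)$ for $\h(4,q^2)$, so that in both cases the claimed pencil consists of $t+1$ lines. Let $d$ be the maximal number of lines of $\L$ through a common point, attained at $P_0$, and let $\L_0\subseteq\L$ be the $d$ lines of $\L$ through $P_0$. If $d=t+1$ then $\L_0$ is the pencil at $P_0$ and we are done, so assume $d\le t$. By axiom (iii) every line $m$ with $P_0\notin m$ has a unique transversal line $\mathrm{proj}_{P_0}(m)$ through $P_0$, and $m$ is blocked by $\L_0$ exactly when $\mathrm{proj}_{P_0}(m)\in\L_0$. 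Counting the lines whose transversal is one of the $t+1-d$ lines through $P_0$ missing from $\L_0$ shows that precisely $st(t+1-d)$ lines remain unblocked by $\L_0$. I call the $s(t+1-d)$ points other than $P_0$ on the missing lines the \emph{white points}; through each of them pass exactly $t$ unblocked lines, accounting for all of them.

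The engine of the argument is the elementary fact, again immediate from axiom (iii), that a line not through a point $Y$ meets exactly one of the $t+1$ lines through $Y$. Hence a line of $\L$ not passing through a white point $Y$ can block at most one of the $t$ unblocked lines through $Y$; so if no line of $\L$ runs through $Y$, then blocking all $t$ of them requires $t$ distinct lines of $\L$. I therefore split the white points into the set $A$ of those lying on some line of $\L$ and the set $B$ of those lying on none. Any line of $\L$ through a white point lies in $\L\setminus\L_0$, and if it is not contained in the tangent hyperplane $P_0^\perp$ it meets $P_0^\perp$ in a single point and so accounts for at most one point of $A$; this bounds $|A|$ in terms of $|\L|-d$. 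The points of $B$, on the other hand, must each be served \emph{from the side} by $t$ distinct lines of $\L$. Double counting the incidences between $B$ and the blocking lines, using the one-line-per-white-point bound, produces an inequality tying together $d$, $|B|$ and $|\L|=t+1+\delta$.

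Eliminating $|B|$ and optimising over the free parameter $d$ turns this into a single inequality in $\delta$ and $q$. For $\q^-(5,q)$ the incidence count is quadratic, and I expect the worst case to be governed by a quadratic whose discriminant is $5q^2+2q+1$; the hypothesis $\delta\le\tfrac12\bigl(3q-\sqrt{5q^2+2q+1}\bigr)$ is exactly the statement that this quadratic is nonnegative, i.e. that the cost of side-blocking cannot be met unless the inequality is (near-)tight. Tracing back the equality case, the white points together with the repairing lines are forced to fill out a hyperplane section $H$ with $H\cap\q^-(5,q)=\q(4,q)$, and the lines of $\L$ lying in $H$ then cover every point of $\q(4,q)$, giving the second alternative. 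For $\h(4,q^2)$ the same scheme yields a much weaker, essentially linear, threshold $\delta<q-3$; moreover $\h(3,q^2)$ has no spread (its dual would be an ovoid of $\q^-(5,q)$, which does not exist), so any cover of a subquadrangle needs far more than $q^3+1$ lines, more than the budget $q^3+\delta+1$ permits. Thus for $\h(4,q^2)$ the second alternative cannot fit, $B$ must be empty, and only the pencil survives.

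The hard part will be making the side-blocking bound sharp enough to reach the stated thresholds: the crude estimate ``one blocked line per white point per line of $\L$'' is not by itself strong enough. One must control, via the projection from $P_0$ onto the base polar space ($\q^-(3,q)$, respectively $\h(2,q^2)$), how the transversals of a single line distribute among covered and white points, and in particular handle the lines of $\L$ contained in the tangent hyperplane $P_0^\perp$, which can pass through many white points at once and so behave very differently from the generic lines. Getting the constants right in this distribution, and then recognising the equality configuration as a spread or cover of a subquadrangle, is where the real work lies; the clean threshold $\tfrac12\bigl(3q-\sqrt{5q^2+2q+1}\bigr)$ is precisely the signal that the decisive estimate reduces to a single quadratic in $\delta$.
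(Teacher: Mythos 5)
Your proposal has a genuine gap: the quantitative core of the argument is never carried out, and the mechanism you propose for reaching the second alternative rests on a false premise. The only inequality your setup actually yields is the crude double count (each point of $B$ needs $t$ serving lines of $\L$; each line of $\L$ can serve at most $(s+1)t$ white points), and this is vacuous: it reads $t|B|\le|\L|(s+1)t$, which is satisfied with enormous slack for every value of $d$, including $d=1$. Everything that would make the proof work --- the refined distribution estimate, the elimination of $|B|$, the optimisation over $d$, and the claim that the resulting quadratic in $\delta$ has discriminant $5q^2+2q+1$ --- is asserted, not derived; the threshold is read off from the statement of the theorem rather than produced by a computation. More seriously, the structural endgame fails: you claim the cover of a hyperplane section arises by ``tracing back the (near-)equality case'' of your inequality, but when $\L$ actually is a cover of an embedded $\q(4,q)$ (say one containing a line spread, $q$ even, so that $d$ can equal $1$), the number of serving pairs needed is about $q^5-q^4$ while the capacity in your bound is about $q^5+q^4$: the cover sits well inside the feasible region, with slack of order $q^4$, not at its boundary. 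A purely numerical inequality in $d$, $|B|$, $\delta$, $q$ that is comfortably satisfied by the target configuration cannot force that configuration; a genuinely structural argument is needed and none is given. The same objection applies to part (b), where your (correct) appeal to the non-existence of spreads of $\h(3,q^2)$ presupposes the very dichotomy ``pencil or cover of a subquadrangle'' that was to be established.

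For contrast, the paper never fixes a maximal-degree point. Assuming $\L$ contains no pencil, it works globally with the set $\M$ of covered points: it bounds the excess $W=|\L|(s+1)-|\M|$ (Lemmas \ref{Wbound} and \ref{WboundH}), converts this into a lower bound on the number $\tilde b_{s+1}$ of completely covered lines outside $\L$ (Lemmas \ref{bound_on_bqplus1} and \ref{bound_on_bq2plus1}), and then counts triples $(l_1,l_2,g)$ with $l_1,l_2\in\L$ skew and $g\subseteq\M$ a common transversal. For $\q^-(5,q)$ this yields two skew lines of $\L$ with more than $\delta$ transversals inside $\M$, hence a $\q^+(3,q)\subseteq\M$ (Lemma \ref{le:q3q}); a hole-counting argument extends it to a full $\q(4,q)\subseteq\M$ (Lemma \ref{le:q4q}), on which $\L$ must induce a cover (Lemma \ref{le:q5qfinal}). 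For $\h(4,q^2)$ the same triple count places at least $3q(q^2+1)$ covered points inside an embedded $\h(3,q^2)$, while the cover bound $q^3+q$ of \cite{M1998} caps the number of covered points of that variety at $(q^2+1)(2q+\delta+1)$; the contradiction forces $\delta\ge q-3$ (Lemma \ref{le:h4qfinal}). The thresholds in the theorem come out of explicit polynomial estimates (Lemmas \ref{le:analyse} and \ref{le:analyse2}), not from an equality analysis.
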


Section~\ref{sec:rankn} is devoted to a generalization of
Proposition~\ref{pro:q4q} and Theorem~\ref{th:rank2_gap} to finite classical
polar spaces of any rank.

\section{Generalized quadrangles}\label{sec:rank2}

In this section, we study minimal generator blocking sets $\L$ of GQs of order
$(s,t)$. After general observations and the proof of Theorem~\ref{th:rank2}, we
devote two subsections to the particular cases $\S=\q^-(5,q)$ and
$\S=\h(4,q^2)$. We remind that for a GQ $\IS$ of order $(s,t)$, $|\P| =
(st+1)(s+1)$ and $|\G| = (st+1)(t+1)$, see e.g. \cite{PT2009}. Suppose that $P$ is a point
of $\S$, then we denote by $P^\perp$ the set of all points of $\S$ collinear
with $P$. By definition, $P \in P^\perp$. For a classical GQ $\S$
with point set $\P$, the set $P^\perp = \pi \cap \P$, with $\pi$ the tangent
hyperplane to $\S$ in the ambient projective space at the point $P$
\cite{Hirschfeld,PT2009}. Therefore, when $P$ is a point of a classical GQ $\S$,
we also use the notation $P^\perp$ for the tangent hyperplane $\pi$. From the
context, it will always be clear whether $P^\perp$ refers to the point set or to
the tangent hyperplane.

We denote by $\M$ the set of points of $\P$ covered by the lines of $\L$. 
Suppose that $\P \neq \M$, and consider a point $P \in \M \setminus \P$.
Since a GQ does not contain triangles, different lines on $P$ meet different 
lines of $\L$. As every point lies on $t+1$ lines, this implies that $|\L| = t+1+\delta$ 
with $\delta \ge 0$. For each point $P \in \M$, we define $w(P)$ as the number of 
lines of $\L$ on $P$. Also, we define
\[
W := \sum_{P \in \M} (w(P)-1),
\]
then clearly $|\M| = |\L|(s+1)-W$.

We denote by $b_i$ the number of lines of $\G \setminus \L$ that meet exactly
$i$ lines of $\L$, $0 \leq i$. Derived from this notation, we denote by $b_i(P)$
the number of lines on $P \not \in \M$ that meet exactly $i$ lines of $\L$,
$1\leq i$. Remark that there is no a priori upper bound on the number of lines
of $\L$ that meet a line of $\G \setminus \L$. In the next lemmas however, we
will search for completely covered lines not in $\L$, and therefore we denote by
$\tilde{b_i}$ the number of lines of $\G \setminus \L$ that contain exactly $i$
covered points, $0 \leq i \leq s+1$, and we denote by $\tilde{b}_i(P)$ the
number of lines on $P \not \in \M$ containing exactly $i$ covered points, $0
\leq i \leq s+1$. 

\begin{Le}\label{Qminus_basic}\label{H_basic}
Suppose that $\delta < s - 1$.
\begin{compactenum}[\rm (a)]
\item Let the point $X \in \P \setminus \M$. Then $\sum_ib_i(X)(i-1)=\delta$ and
\[
\sum_{P \in X^\perp \cap \M}(w(P) - 1) \leq \delta.
\]
\item A line not contained in $\M$ can meet at most $\delta+1$ lines of $\L$. In
particular, $\tilde{b}_i = b_i=0$ for $i=0$ and for $\delta+1<i<s+1$.
\item 
\[
\sum_{i=2}^{\delta+1}\tilde{b}_i(i-1) \leq \sum_{i=2}^{\delta+1}b_i(i-1).
\]
\item If $P_0$ is a point of $\M$ that lies on a line $l$ meeting $\M$ only in
$P_0$, then
\[
\sum_{P \in \M \setminus P_0^\perp}(w(P)-1)\le\delta s.
\]
\item 
\[
(s-\delta)\sum_{i=1}^{\delta+1}b_i(i-1)\le (st-t-\delta)(s+1)\delta+W\delta.
\]
\item If not all lines on a point $P$ belong to $\L$, then at most $\delta+1$
lines on $P$ belong to $\L$, and less than $\frac{t}{s}+1$ lines
on $P$ not in $\L$ are completely contained in $\M$.
\end{compactenum}
\end{Le}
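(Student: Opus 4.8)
The plan is to prove all six assertions by local double counting at uncovered points and at individual lines, using only the no‑triangle axiom (iii) and the blocking property of $\L$, and then to globalise one of these counts to obtain (e). I would start with (a), which is the engine for the rest. Fix an uncovered point $X$. Each of the $t+1$ lines through $X$ lies in $\G\setminus\L$ and meets at least one line of $\L$, while each $n\in\L$ misses $X$ and hence, by axiom (iii), is met by a \emph{unique} line through $X$. Counting incident pairs (line through $X$, line of $\L$ met) two ways gives $\sum_i b_i(X)=t+1$ and $\sum_i i\,b_i(X)=|\L|=t+1+\delta$, so $\sum_i b_i(X)(i-1)=\delta$. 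For the inequality I use that the number of $\L$-lines met by a line $\ell\ni X$ equals $\sum_{R\in\ell\cap\M}w(R)$ (distinct intersection points give distinct $\L$-lines, and $X\notin\M$); summing over the $t+1$ lines through $X$ and using the disjoint decomposition $X^\perp\cap\M=\bigsqcup_{\ell\ni X}(\ell\cap\M)$ yields $\sum_{R\in X^\perp\cap\M}(w(R)-1)=|\L|-|X^\perp\cap\M|$. Since every line through $X$ carries a covered point, $|X^\perp\cap\M|\ge t+1$ and the bound $\le\delta$ follows.

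Parts (b)–(d) then fall out quickly. For (b): a line with an uncovered point $X$ meets at most $(t+1+\delta)-t=\delta+1$ lines of $\L$, because the remaining $t$ lines through $X$ each meet at least one; a line lying in $\M$ but not in $\L$ meets $\sum_{R\in\ell}w(R)\ge s+1$ of them, and $\tilde b_0=b_0=0$ by blocking, which gives the stated vanishing. For (c), on a partial line the number of covered points is at most the number of $\L$-lines met (since $w\ge1$), and by (b) both values lie in $[2,\delta+1]$, so summing term by term over partial lines gives the inequality. For (d), the tangent line $\ell$ to $\M$ at $P_0$ has its $s$ points other than $P_0$ all uncovered; by axiom (iii) every covered point not collinear with $P_0$ is collinear with exactly one of these $s$ points, so $\sum_{R\in\M\setminus P_0^\perp}(w(R)-1)$ splits into $s$ sums, each bounded by $\delta$ through (a).

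Part (e) comes from summing the identity of (a) over all uncovered points. The right-hand side is then $|\P\setminus\M|\,\delta$, and with $|\M|=|\L|(s+1)-W$ one computes $|\P\setminus\M|=(st-t-\delta)(s+1)+W$, which matches the stated right-hand side exactly. For the left-hand side I observe that a line counted by $b_i$ with $1\le i\le\delta+1$ is partial (by (b)) with at most $\delta+1$ covered points, hence at least $s-\delta$ uncovered points; since $\sum_{X}b_i(X)$ counts incidences of uncovered points with such lines, it is at least $(s-\delta)b_i$, and the inequality follows (the terms with $i>\delta+1$ vanish because lines through an uncovered point are never contained in $\M$).

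The main obstacle is (f), where the genuine geometric input appears. Let $N$ be the number of lines through $P$ that are not in $\L$ and are completely contained in $\M$. For each such line $m$ the lines of $\L$ meeting $m$ away from $P$ number $\sum_{Q\in m,\,Q\ne P}w(Q)\ge s$, and the no-triangle condition forces these $\L$-line sets to be pairwise disjoint for distinct $m$ (a shared $\L$-line would close a triangle through $P$). Hence $Ns\le|\L|-w(P)=t+1+\delta-w(P)$, which already gives $N\le(t+1+\delta)/s<\frac{t}{s}+1$ precisely because $\delta<s-1$; this is the second assertion. For the first, if \emph{every} non-$\L$ line through $P$ lay in $\M$ then $N=t+1-w(P)$, and the same bound would give $(s-1)(t+1-w(P))\le\delta<s-1$, forcing $w(P)=t+1$, i.e. all lines through $P$ in $\L$, against the hypothesis. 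So some non-$\L$ line through $P$ has an uncovered point, and applying (a) (or (b)) to it bounds $w(P)$ by $\delta+1$. The delicate points I expect here are the disjointness argument and verifying that the arithmetic closes up exactly under the single hypothesis $\delta<s-1$.
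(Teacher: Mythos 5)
Your proof is correct and takes essentially the same route as the paper's: the same double counts at an uncovered point for (a), the same triangle-freeness/disjointness arguments for (b), (d) and (f), and the same global summation of (a) over holes for (e). The only cosmetic difference is in (f), where you obtain the bound $w(P)\le\delta+1$ by citing (a)/(b) at the uncovered point $P_0$ instead of re-running the blocking argument directly, and your count $Ns\le|\L|-w(P)$ is just a rearrangement of the paper's $t+1-x+xs\le|\L|$.
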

\begin{proof}
\begin{compactenum}[\rm (a)]
\item Consider a point $X \in \P \setminus \M$. Each of the $t+1$ lines on $X$
meets a line of $\L$, and every line of $\L$ meets exactly one of these $t+1$
lines. Hence 
\[
|X^\perp\cap \M| \ge  t+1 =\sum_ib_i(X) \,.
\]
Furthermore,
\[
\sum_{P\in X^\perp\cap \M}w(P) = \sum_ib_i(X)i=|{\L}|=t+1+\delta \,.
\]
Both assertions follow immediately.
\item Since every line of $\S$ meets a line of $\L$, it follows that
$\tilde{b}_0=b_0=0$. Consider any line $l \not \in \L$ containing a
point $P \not \in \M$. The $t$ lines different from $l$ on $P$ are blocked by at
least $t$ lines of $\L$ not meeting $l$. So at most $|\L| - t = \delta + 1$
lines of $\L$ can meet $l$.
\item 
Consider a line $l$ containing $i$ covered points with $0 < i \leq \delta + 1$.
Then $l$ must meet at least $i$ lines of $\L$, and, by (b), at most $\delta +1$ 
lines of $\L$. On the left hand side, this line
is counted exactly $i-1$ times, on the right hand side this line is counted at
least $i-1$ times. This gives the inequality.
\item
Each point $P$, with $P\not \in P_0^\perp$, is collinear to exactly one point
$X\neq P_0$ of $l$. For $X \in l, X \neq P_0$, the inequality of (a) gives
$\sum_{P\in X^\perp \cap \M}(w(P)-1)\leq \delta$. Summing over the $s$ points on
$l$ different from $P_0$ gives the expression.
\item It follows from (b) that every line with a point not in $\M$ has at least
$s-\delta$ points not in $\M$. Taking the sum over all points $P$ not in $\M$
and using the equality of (a), one finds 
\[
\sum_{i=1}^{\delta+1}b_i(s-\delta)(i-1)
\le \sum_{P\not\in\M}\sum_{i=1}^{\delta+1}b_i(P)(i-1) = (|\P|-|\M|)\delta.
\]
As $|\M|=|\L|(s+1)-W$, the assertion follows.
\item Suppose that the point $P$ lies on exactly $x\ge 1$ lines that are not
elements of $\L$. It is not possible that all these $x$ lines are contained in
$\M$, since this would require $xs$ lines of $\L$ that are not on $P$, and then
$|\L|\ge t+1-x+xs\ge t+s$, a contradiction with $\delta < s-1$. Thus we find a
point $P_0\in P^\perp\setminus\M$. Then the $t$ lines on $P_0$, different from
$\langle P, P_0 \rangle$ must be blocked by a line of $\L$ 
not on $P$, hence at most $\delta+1$ lines of $\L$ can contain $P$.

If $y$ lines on $P$ do not belong to $\L$, but are completely contained in $\M$,
then at least $1+ys$ lines contained in $\L$ meet the union of these $y$ lines,
so $1+ys \leq |\L| = t+1+\delta$, so $y < \frac{t}{s}+1$ as $\delta < s$.
\end{compactenum}
\end{proof}

\begin{Le}\label{le:twopencil}
Suppose that $\delta=0$. If two lines of $\L$ meet, then $\L$ is a pencil of
$t+1$ lines through a point $P$.
\end{Le}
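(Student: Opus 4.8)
The plan is to prove that the common point $P$ of the two given lines of $\L$ is in fact the vertex of the entire pencil. Since $\delta=0$ we have $|\L|=t+1$, and because the only interesting case requires the hypothesis $\delta<s-1$ of Lemma~\ref{Qminus_basic}, I work under $s\ge 2$ (the statement genuinely needs this: for $s=1$, dual grids already furnish blocking sets of size $t+1$ with two meeting lines that are not pencils). The first step is to extract a local consequence of Lemma~\ref{Qminus_basic}(a). For $\delta=0$ and any uncovered point $X\in\P\setminus\M$, the inequality $\sum_{Q\in X^\perp\cap\M}(w(Q)-1)\le 0$ together with $w(Q)\ge 1$ forces $w(Q)=1$ for every covered point $Q$ collinear with $X$. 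Read contrapositively, any point $Q$ with $w(Q)\ge 2$ is collinear with no uncovered point, so $Q^\perp\subseteq\M$. Applying this to $P$, which carries $w(P)\ge 2$ since two lines of $\L$ pass through it, I would conclude that every point collinear with $P$ is covered, i.e. $P^\perp\subseteq\M$.

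The heart of the argument is then a counting step combined with the triangle-free property. Set $k:=w(P)$, so $k\ge 2$, and suppose for contradiction that $k\le t$, so that $t+1-k\ge 1$ lines through $P$ avoid $\L$, while exactly $t+1-k$ lines of $\L$ avoid $P$. Fix a line $n$ through $P$ with $n\notin\L$. Each of its $s$ points distinct from $P$ lies in $P^\perp\subseteq\M$, hence is covered by a line of $\L$; that covering line differs from $n$ (as $n\notin\L$) and therefore does not pass through $P$, since $n$ is the unique line joining the point to $P$. These $s$ lines are distinct because they meet $n$ in distinct points. Moreover, for two different non-$\L$ lines $n_1,n_2$ through $P$ the off-$P$ lines they produce are disjoint, for a common such line would meet $n_1$ and $n_2$ in points $\ne P$ and create a triangle with $P$, forbidden in a GQ. Thus the $t+1-k$ non-$\L$ lines through $P$ force at least $s(t+1-k)$ distinct lines of $\L$ off $P$, whence $s(t+1-k)\le t+1-k$; as $t+1-k\ge 1$ this yields $s\le 1$, contradicting $s\ge 2$. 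Hence $k=t+1$ and $\L$ is the pencil through $P$.

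The step I expect to be the main obstacle is the disjointness bookkeeping in the counting: everything hinges on showing that the $s$ covering lines attached to one non-$\L$ line through $P$, and the families attached to distinct such lines, are all pairwise distinct, and this rests entirely on axiom~(iii) (no triangles). Once the crucial structural fact $P^\perp\subseteq\M$ is in hand, the remainder is formal; the only other point demanding care is the standing assumption $s\ge 2$, without which both the invocation of Lemma~\ref{Qminus_basic} and the final inequality break down.
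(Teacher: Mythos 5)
Your proof is correct, and it reaches the lemma by a different route than the paper, whose own proof is a one-liner: Lemma~\ref{Qminus_basic}(f) with $\delta=0$ says that a point not lying on a full pencil of $\L$-lines lies on at most one line of $\L$, which is precisely the contrapositive of the claim. The combinatorial core overlaps, though: your counting step (a non-$\L$ line $n$ through $P$ that is contained in $\M$ forces $s$ distinct lines of $\L$ off $P$, and distinct such lines $n_1,n_2$ force disjoint families, by triangle-freeness) is essentially the first half of the paper's proof of (f). The difference is the direction in which this count is run. The paper uses it to produce an \emph{uncovered} point $P_0\in P^\perp$ and then bounds $w(P)\le\delta+1$ by blocking the $t$ lines on $P_0$ other than $\langle P,P_0\rangle$; you instead invoke part (a) at uncovered points to conclude that $w(P)\ge 2$ forces $P^\perp\subseteq\M$ (no holes collinear with $P$), after which the count alone gives $s(t+1-k)\le t+1-k$, hence $k=t+1$. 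So part (a) replaces the paper's ``find a hole'' step, and the count replaces its ``block at the hole'' step. A further merit of your write-up is that it makes explicit the hypothesis $s\ge 2$ (equivalently, the standing assumption $\delta<s-1$ of Lemma~\ref{Qminus_basic}), which the lemma as stated silently inherits; your claim that this is necessary is right: in the dual grid of order $(1,t)$ with point classes $\{a_1,\dots,a_{t+1}\}$ and $\{b_1,\dots,b_{t+1}\}$, the set of lines $\{a_1b_1,\,a_1b_2,\,a_2b_3,\dots,a_2b_{t+1}\}$ blocks every line, has size $t+1$ and two meeting lines, yet is not a pencil. What the paper's route buys is brevity, since (f) is proved once and reused repeatedly later; what yours buys is independence from (f) and an explicit delimitation of the parameter range in which the statement actually holds.
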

\begin{proof}
The lemma follows immediately from Lemma~\ref{Qminus_basic}~(f).
\end{proof}

\begin{Le}\label{le:pencil_sub}
Suppose that $\delta=0$. If $\L$ is not a pencil, then $t \geq s$ and $\L$ is a
spread of a subquadrangle of order $(s,t/s)$.
\end{Le}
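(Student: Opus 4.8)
The plan is to show that the lines of $\L$ are pairwise disjoint and that the set $\M$ of points they cover, equipped with the lines of $\S$ lying inside $\M$, is a subquadrangle of order $(s,t/s)$ having $\L$ as a spread.

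First I would invoke Lemma~\ref{le:twopencil}: since $\delta=0$ and $\L$ is not a pencil, no two lines of $\L$ meet, so the $t+1$ lines of $\L$ are pairwise disjoint and $|\M|=(t+1)(s+1)$ (that is, $W=0$). The key local fact I would extract from Lemma~\ref{Qminus_basic}(b), specialised to $\delta=0$, is the following dichotomy: every line of $\S$ is either entirely contained in $\M$ or meets $\M$ in at most one point. Indeed, a line $m\not\subseteq\M$ meets at most $\delta+1=1$ line of $\L$ by part (b); if $m$ carried two points of $\M$, these would lie on two distinct lines of $\L$ (distinct, because two points of $m$ on one line of $\L$ would force $m$ to equal that line, whence $m\in\L\subseteq\M$), contradicting that $m$ meets at most one line of $\L$.

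Next I would set $\S':=(\M,\G',\I')$, where $\G'$ is the set of lines of $\S$ contained in $\M$ and $\I'$ is the induced incidence; then $\L\subseteq\G'$ and each line of $\G'$ carries $s+1$ points of $\M$. The heart of the argument is to verify quadrangle axiom~(iii) inside $\S'$: given $P\in\M$ and $g\in\G'$ with $P$ not on $g$, let $Q$ be the unique point of $g$ collinear with $P$ in $\S$. The line $PQ$ meets $\M$ in the two distinct points $P,Q$, so $PQ\in\G'$ by the dichotomy, and uniqueness of $Q$ is inherited from $\S$. This step, which forces the connecting lines to remain inside $\M$, is precisely where the dichotomy does the work, and it is the part I expect to be the main obstacle. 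Using two disjoint lines $l_1,l_2\in\L$ together with axiom~(iii) of $\S$, I would also check that every point of $\M$ lies on at least two lines of $\G'$, so $\S'$ is non-degenerate.

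Finally, with every line of $\G'$ on $s+1$ points, axiom~(iii) established, and every point on at least two lines, the standard bijection between the line-pencils at two non-collinear points (see \cite{PT2009}) shows that each point of $\S'$ lies on a constant number $t'+1$ of lines; hence $\S'$ is a generalized quadrangle of order $(s,t')$. Comparing point numbers, $(st'+1)(s+1)=|\M|=(t+1)(s+1)$ yields $t'=t/s$, so in particular $s\mid t$ and, since $t'\ge 1$, $t\ge s$. As $\L$ consists of $t+1=st'+1$ pairwise disjoint lines of $\S'$ covering all of $\M$, it is a spread of the subquadrangle $\S'$ of order $(s,t/s)$, which is the assertion.
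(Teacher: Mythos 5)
Your proposal is correct and takes essentially the same route as the paper: pairwise skewness of $\L$ via Lemma~\ref{le:twopencil}, the dichotomy from Lemma~\ref{Qminus_basic}~(b) to show that the set $\G'$ of lines fully contained in $\M$ satisfies the quadrangle axiom~(iii), and the point count $(t's+1)(s+1)=|\M|=(t+1)(s+1)$ to conclude $t'=t/s$ and $t\ge s$. The only difference is that you spell out the non-degeneracy and pencil-regularity of $(\M,\G')$, a step the paper leaves implicit when it asserts that $(\M,\G')$ is a GQ of some order $(s,t')$.
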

\begin{proof}
We may suppose that $\L$ is not a pencil, so that the lines of $\L$ are pairwise
skew by Lemma \ref{le:twopencil}. Consider the set $\G'$ of all lines completely
contained in $\M$. The set $\G'$ contains at least all the elements of $\L$, so
$\G'$ is not empty. If $l\in\G'$ and $P\in\M$ not on $l$, then there is a
unique line $g \in \G$ on $P$ meeting $l$. As this line contains already
two points of $\M$, it is contained in $\M$ by Lemma~\ref{Qminus_basic} (b),
that is $g \in \G'$. This shows that $(\M, \G')$ is a GQ of some order $(s, t')$
and hence it has $(t's + 1)(s + 1)$ points. As $|\M| = (t + 1)(s + 1)$, then $t's
= t$, that is $t' = t/s$ and hence $t \geq s$. 
\end{proof}

This lemma proves Theorem~\ref{th:rank2}. 

\subsection{The case $\S = \q^-(5,q)$}
In this subsection, $\S = \q^-(5,q)$, so $(s,t) = (q,q^2)$, and
$|\L|=q^2+1+\delta$. We suppose that $\L$ contains no pencil and we will show
for small $\delta$ that $\L$ contains a cover of a parabolic quadric $\q(4,q)
\subseteq \S$.

The set $\M$ of covered points blocks all the lines of $\q^-(5,q)$. An easy
counting argument shows that $|\M| \geq q^3+1$ (in fact, it follows from
\cite{M1998} that $|\M| \geq q^3+q$, but we will not use this stronger lower
bound). Thus $W = |\L|(q+1) - |\M| \leq (q+1)(q+\delta)$. 
\begin{Le}\label{Wbound}
If $\delta\le\frac{q-1}{2}$, then $W\le \delta(q+2)$.
\end{Le}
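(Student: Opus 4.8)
The plan is to control $W=\sum_{P\in\M}(w(P)-1)$ by a double count that plays the local excess at multiply-covered points against the abundance of \emph{holes}, i.e.\ points of $\P\setminus\M$. For each $P\in\M$ write $n(P)$ for the number of holes collinear with $P$, so that $n(P)=|P^{\perp}|-|P^{\perp}\cap\M|=(q^{3}+q+1)-|P^{\perp}\cap\M|$, using that in $\q^-(5,q)$ the cone $P^{\perp}$ carries $1+(q^{2}+1)q=q^{3}+q+1$ points. The second inequality of Lemma~\ref{Qminus_basic}(a) says that every hole $X$ satisfies $\sum_{P\in X^{\perp}\cap\M}(w(P)-1)\le\delta$. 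Summing this over all holes $X\in\P\setminus\M$ and interchanging the order of summation (using $P\in X^{\perp}\Leftrightarrow X\in P^{\perp}$) gives
\[
\sum_{P\in\M}(w(P)-1)\,n(P)\ \le\ \delta\,|\P\setminus\M|.
\]

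The main work is a uniform lower bound on $n(P)$, equivalently an upper bound on $|P^{\perp}\cap\M|$. First, since $\L$ contains no pencil, not all $q^{2}+1$ lines through $P$ lie in $\L$, so Lemma~\ref{Qminus_basic}(f) gives $w(P)\le\delta+1$ for every $P$. Now I would count the points of $\M$ in the cone $P^{\perp}$: the $w(P)$ lines of $\L$ through $P$ account for $1+w(P)q$ of them, while each of the remaining $|\L|-w(P)$ lines of $\L$ meets $P^{\perp}$ in exactly one point by the GQ axiom. Hence $|P^{\perp}\cap\M|\le 1+w(P)q+(|\L|-w(P))$, and feeding in $w(P)\le\delta+1$ and $|\L|=q^{2}+1+\delta$ yields $|P^{\perp}\cap\M|\le q^{2}+\delta q+q+1$, so that
\[
n(P)\ \ge\ q^{3}-q^{2}-\delta q\ =:\ N\ >0 .
\]

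With $n(P)\ge N$ for all $P$, the left side of the displayed inequality is at least $N\,W$, giving $N\,W\le\delta\,|\P\setminus\M|$. The point of retaining $W$ on both sides, rather than inserting the crude bound $|\M|\ge q^{3}+1$, is that $|\P\setminus\M|=|\P|-|\L|(q+1)+W=q^{4}-q^{2}-\delta(q+1)+W$, so the inequality is linear in $W$ and rearranges to
\[
W\ \le\ \delta\,\frac{q^{4}-q^{2}-\delta(q+1)}{\,q^{3}-q^{2}-\delta(q+1)\,}.
\]
It then remains to check that the right-hand side is at most $\delta(q+2)$; clearing the (positive) denominator, this reduces to the clean inequality $\delta(q+1)^{2}\le q^{2}(q-1)$, which follows from the hypothesis $\delta\le\tfrac{q-1}{2}$ as soon as $(q+1)^{2}\le 2q^{2}$, i.e.\ for $q\ge3$ (the case $q=2$ forces $\delta=0$, where $W=0$ trivially).

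I expect the single real obstacle to be the uniform lower bound on $n(P)$ in the middle step: one must show that, however the lines of $\L$ concentrate, every cone $P^{\perp}$ consists almost entirely of holes. This rests on the two structural facts above, namely $w(P)\le\delta+1$ (from the no-pencil hypothesis) and the ``unique transversal'' property of the GQ forcing each further line of $\L$ to meet $P^{\perp}$ in a single point; and without the sharper, $W$-dependent expression for $|\P\setminus\M|$ the final constant would be too weak to reach $q+2$.
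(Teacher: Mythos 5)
Your proof is correct, and it takes a genuinely different route from the paper's. The paper argues locally: using parts (d), (e) and (f) of Lemma~\ref{Qminus_basic} it produces one well-chosen line $l\in\L$ meeting few multisecants, shows that all but at most one point of $l$ lie on a line meeting $\L$ exactly once, and then splits $W=\sum_{P\notin l}(w(P)-1)+\sum_{P\in l}(w(P)-1)\le\delta(q+1)+\delta$. You argue globally: summing the inequality of Lemma~\ref{Qminus_basic}~(a) over all holes and interchanging the order of summation gives $\sum_{P\in\M}(w(P)-1)\,n(P)\le\delta\,|\P\setminus\M|$, and the uniform bound $n(P)\ge q^3-q^2-\delta q$ (which you derive correctly from $w(P)\le\delta+1$, i.e.\ part~(f) combined with the subsection's standing no-pencil hypothesis, and from the GQ axiom forcing each line of $\L$ not on $P$ to meet the point set $P^\perp$ in exactly one point) turns this into an inequality that is linear in $W$, because $|\P\setminus\M|=q^4-q^2-\delta(q+1)+W$ holds exactly. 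Your algebra checks out: the denominator $q^3-q^2-\delta(q+1)$ is positive under your hypothesis, the comparison with $\delta(q+2)$ reduces exactly to $\delta(q+1)^2\le q^2(q-1)$, which follows from $\delta\le\frac{q-1}{2}$ for $q\ge 3$, and for $q=2$ the hypothesis forces $\delta=0$, whence your own inequality gives $W=0$ since $n(P)>0$. What your route buys is economy and transparency: it uses only parts (a) and (f) of Lemma~\ref{Qminus_basic} plus elementary counting, dispenses entirely with the sets $\B$ and $\B_1$ and with parts (d) and (e), and it exposes why the constant $q+2$ is attainable at all, namely $(q^4-q^2)/(q^3-q^2)=q+1$, just under $q+2$. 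What the paper's route buys is reusability: the same localization scheme is repeated almost verbatim for $\h(4,q^2)$ in Lemma~\ref{WboundH}, whereas your computation would have to be redone with $(s,t)=(q^2,q^3)$ (it plausibly also succeeds there, with a ratio near $q^2+1$ against the target $q^2+3$, but that is outside what you wrote).
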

\begin{proof}
Denote by $\B$ the set of all lines not in $\L$, meeting exactly $i$ lines of
$\L$ for some $i$, with $2\le i\le \delta+1$. We count the number of pairs
$(l,m)$, $l \in \L$, $m \in \B$, $l$ meets $m$. The number of these pairs is
$\sum_{i=2}^{\delta+1} b_i i$.

It follows from
Lemma~\ref{Qminus_basic} (e), $W \leq (q+1)(q+\delta)$, and $\delta \leq
\frac{q-1}{2}$, that

\begin{eqnarray*}
\sum_{i=2}^{\delta+1}b_ii
&\le &2\sum_{i=1}^{\delta+1}b_i(i-1)\le
2\cdot \frac{(q^3-q^2-\delta)(q+1)\delta+W\delta}{q-\delta}
\\
&\le& 2\frac{(q+1)\delta(q^3-q^2+q)}{q-\delta}
\le 2(q-1)(q^3-q^2+q)=:c
\end{eqnarray*}

Hence, some line $l$ of $\L$ meets at most $\lfloor c/|\L|\rfloor$ lines
of $\B$. Denote by $\B_1$ the set of lines not in $\L$ that
meet exactly one line of $\L$. If a point $P$ does not lie on a line of $\B_1$,
then it lies on at least $q^2-q-\delta$ lines of $\B$ (by
Lemma~\ref{Qminus_basic} (f) and since $\L$ contains no pencil). As $\delta \le
\frac{q-1}{2}$, then $c/|\L|< 2(q^2-q-\delta)$, so at most one point of $l$ can
have this property. Thus $l$ has $x\ge q$ points $P_0$ that lie on a line of
$\B_1$, so $l$ is the only line of $\L$ meeting such a line. Apply
Lemma~\ref{Qminus_basic}~(d) on these $x$ points. As every point not on $l$ is
collinear with at most one of these $x$ points, it follows that
\[
\sum_{P\in\M\setminus l}(w(P)-1)\le\frac{x\delta q}{x-1}\le \frac{\delta
q^2}{q-1}<\delta(q+1)+1\,.
\]
All but at most one point of $l$ lie on a line of $\B_1$, so $l$ is the only line
of $\L$ on these points. One point of $l$ can be contained in more than one line
of $\L$, but then it is contained in at most $\delta+1$ lines of $\L$ by
Lemma~\ref{Qminus_basic}~(f). Hence $\sum_{P\in 
l}(w(P)-1)\le \delta$, and therefore $W\le\delta(q+2)$.
\end{proof} 

\begin{Le}\label{bound_on_bqplus1}
If $\delta\le\frac{q-1}{2}$, then
\[
\tilde{b}_{q+1}\ge q^3+q-\delta-\frac{(q^3+q^2-q\delta-q+1)\delta}{q-\delta}.
\]
\end{Le}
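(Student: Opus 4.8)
The plan is to determine $\tilde b_{q+1}$, the number of completely covered lines outside $\L$, by a double count of the incidences between covered points and lines of $\G\setminus\L$, and then to subtract the contribution of the partially covered lines with the help of the earlier lemmas.

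First I would count the flags $(P,l)$ with $P\in\M$ and $l\in\G\setminus\L$ through $P$. Each covered point lies on $q^2+1$ lines, of which $w(P)$ belong to $\L$; since every line of $\L$ carries $q+1$ covered points, summing $q^2+1-w(P)$ over $\M$ gives
\[
N:=\sum_{P\in\M}(q^2+1-w(P))=(q^2+1)|\M|-|\L|(q+1).
\]
Counting the same flags by lines, and recalling from Lemma~\ref{Qminus_basic}~(b) that $\tilde b_i=0$ for $i=0$ and for $\delta+1<i<q+1$, I get $N=\sum_{i=1}^{\delta+1}i\,\tilde b_i+(q+1)\tilde b_{q+1}$. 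Writing $\sum_{i=1}^{\delta+1}\tilde b_i=|\G|-|\L|-\tilde b_{q+1}$ and splitting $i=(i-1)+1$, the two expressions for $N$ combine into the exact identity
\[
q\,\tilde b_{q+1}=N-|\G|+|\L|-\sum_{i=2}^{\delta+1}(i-1)\tilde b_i.
\]

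Next I would bound the error sum from above. By Lemma~\ref{Qminus_basic}~(c) one has $\sum_{i=2}^{\delta+1}(i-1)\tilde b_i\le\sum_{i=1}^{\delta+1}(i-1)b_i$, and Lemma~\ref{Qminus_basic}~(e), read with $(s,t)=(q,q^2)$, bounds the latter by $\frac{(q^3-q^2-\delta)(q+1)\delta+W\delta}{q-\delta}$. Substituting $|\M|=|\L|(q+1)-W$, $|\L|=q^2+1+\delta$ and $|\G|=(q^3+1)(q^2+1)$ into $N-|\G|+|\L|$ and collecting terms yields an expression in which $W$ occurs only through $-(q^2+1)W$ together with the contribution $-\tfrac{W\delta}{q-\delta}$ coming from the fraction above. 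Both coefficients of $W$ are negative, so the right-hand side is smallest when $W$ is largest; hence I may insert the upper bound $W\le\delta(q+2)$ from Lemma~\ref{Wbound} to obtain a valid lower bound for $q\,\tilde b_{q+1}$.

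Finally I would simplify. After replacing $W$ by $\delta(q+2)$, the large terms $\delta(q^3+q^2+q+1)$ and $-(q^2+1)\delta(q+2)$ collapse to $-\delta(q^2+1)$, and combining the remaining polynomial part over the common denominator $q-\delta$ gives exactly
\[
q\,\tilde b_{q+1}\ge q^4+q^2-q\delta-\frac{q\,(q^3+q^2-q\delta-q+1)\delta}{q-\delta},
\]
which is $q$ times the claimed inequality; dividing by $q$ finishes the proof. The only delicate point is the bookkeeping in this last step: one must verify that the $O(\delta q^3)$ contributions cancel and, crucially, that the negative sign of both $W$-coefficients justifies using the \emph{upper} bound on $W$ precisely where a lower bound on $\tilde b_{q+1}$ is wanted. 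Beyond this careful algebra there is no conceptual obstacle.
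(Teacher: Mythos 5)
Your proof is correct and follows essentially the same route as the paper: both count incidences between covered points and lines, use Lemma~\ref{Qminus_basic}~(c) and (e) to bound the contribution $\sum_{i=2}^{\delta+1}(i-1)\tilde b_i$ of partially covered lines, and insert $W\le\delta(q+2)$ from Lemma~\ref{Wbound}; your separation of the $\L$-incidences into $N$ and your explicit remark that both coefficients of $W$ are negative (so the upper bound on $W$ may be substituted) are only cosmetic refinements of the paper's computation, and your final algebra matches the stated bound exactly.
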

\begin{proof}
We count the number of incident pairs $(P,l)$, $P \in \M$ and $l$ a line of $\q^-(5,q)$,
to see
\[
|\M|(q^2+1) = |\L|(q+1) + \sum_{i=1}^{q+1}\tilde{b}_i i \,.
\]
As $\q^-(5,q)$ has
$(q^2+1)(q^3+1)=|\L|+\sum_{i=1}^{q+1}\tilde{b}_i$ lines, then 
\begin{eqnarray*}
|\L|q+\sum_{i=1}^{q+1}\tilde{b}_i(i-1)&=&
|\L|(q+1)+\sum_{i=1}^{q+1}\tilde{b}_ii-(q^2+1)(q^3+1)
\\
&=&|\M|(q^2+1)-(q^2+1)(q^3+1)
\\
&=& (q^2+1)(q+1)(q+\delta)-W(q^2+1).
\\
    &\ge & (q^2+1)(q+1)q-\delta(q^2+1),
\end{eqnarray*}
where we used $W \leq \delta(q+2)$ from Lemma~\ref{Wbound}. From Lemmas
\ref{Qminus_basic}~(c) and (e) and $W \leq \delta(q+2)$, we have 
\[
(q-\delta)\sum_{i=2}^{\delta+1}\tilde{b}_i(i-1) \leq (q-\delta)\sum_{i=2}^{\delta+1}b_i(i-1)\le
(q^3-q^2)(q+1)\delta+\delta^2.
\]
Together this gives
\[
(|\L|+\tilde{b}_{q+1})q\ge
(q^2+1)(q+1)q-\delta(q^2+1)-\frac{(q^3-q^2)(q+1)\delta+\delta^2}{q-\delta}.
\]
Using $|\L|=q^2+1+\delta$, the assertion follows.
\end{proof}

\begin{Le}\label{le:analyse}
If $\delta\le \frac12(3q-\sqrt{5q^2+2q+1})$, then
$|\L|(|\L|-1)\delta<\tilde{b}_{q+1}(q+1)q$.
\end{Le}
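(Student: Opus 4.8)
The plan is to feed the lower bound for $\tilde{b}_{q+1}$ supplied by Lemma~\ref{bound_on_bqplus1} into the right-hand side and to reduce the claim to a purely algebraic inequality in $q$ and $\delta$. First I would record that the hypothesis $\delta\le\frac12(3q-\sqrt{5q^2+2q+1})$ forces $\delta\le\frac{q-1}{2}$: it amounts to $2q+1\le\sqrt{5q^2+2q+1}$, which squares to $q(q-2)\ge 0$. Hence Lemma~\ref{bound_on_bqplus1} is applicable and, in particular, $q-\delta>0$. Since the estimate for $\tilde{b}_{q+1}$ points in the favourable direction, it suffices to prove the strict inequality $|\L|(|\L|-1)\delta<B\,(q+1)q$, where $B$ denotes the lower bound of Lemma~\ref{bound_on_bqplus1}; substituting $|\L|=q^2+1+\delta$ then makes both sides explicit.

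Next I would clear the single denominator $q-\delta>0$. Writing $B=\mathrm{Num}/(q-\delta)$ with $\mathrm{Num}=(q+1)\delta^2-(2q^3+q^2+q+1)\delta+q^4+q^2$, the target inequality becomes the polynomial inequality $D(\delta)>0$, where $D(\delta)=\mathrm{Num}\cdot(q+1)q-(q^2+1+\delta)(q^2+\delta)\,\delta\,(q-\delta)$ is a quartic in $\delta$ whose coefficients are polynomials in $q$.

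The decisive observation is that the hypothesis is exactly a quadratic constraint on $\delta$: isolating the root and squaring (legitimate because $3q-2\delta>0$) shows that $\delta\le\frac12(3q-\sqrt{5q^2+2q+1})$ is equivalent to $4\delta^2-12q\delta+4q^2-2q-1\ge 0$, that is, to $\delta^2-3q\delta+q^2\ge\frac{2q+1}{4}$. The leading part of $D(\delta)$, obtained by collecting the terms $q^6$, $-3q^5\delta$ and $q^4\delta^2$, is precisely $q^4(\delta^2-3q\delta+q^2)$, which this constraint keeps positive and bounded away from $0$; all remaining terms of $D$ are of strictly lower degree in $q$. I would therefore finish by splitting off $q^4(\delta^2-3q\delta+q^2)$ and estimating the lower-order remainder using $0\le\delta\le\frac{q-1}{2}$.

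The main obstacle is this last bookkeeping step. After removing $q^4(\delta^2-3q\delta+q^2)$, the remainder of $D$ still carries a negative term of order $q^4\delta$ (with coefficient $-3q^4-3q^3-2q^2-q$), so it cannot simply be discarded; controlling it against the positive contributions (the surviving $q^5$ constant term and the lower bound $q^4(\delta^2-3q\delta+q^2)\ge\frac{q^4(2q+1)}{4}$) is exactly what pins down the factor $\sqrt{5q^2+2q+1}$ appearing in the hypothesis. It is here that the a priori range $\delta\le\frac{q-1}{2}$ must be invoked to guarantee that the positive terms outweigh the negative linear-in-$\delta$ contribution, yielding $D(\delta)>0$ and hence the claim.
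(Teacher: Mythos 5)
Your setup is correct and is in essence the same as the paper's: substitute the lower bound of Lemma~\ref{bound_on_bqplus1}, clear the factor $q-\delta>0$, and recognize that the hypothesis on $\delta$ is exactly the nonnegativity of the quadratic $\delta^2-3q\delta+q^2-\tfrac{2q+1}{4}$ (the paper's $g(\delta)$). Your derivation of $\delta\le\tfrac{q-1}{2}$, your expression for $\mathrm{Num}$, and your identification of the leading part are all accurate: writing $D(\delta)=\mathrm{Num}\cdot(q+1)q-(q^2+1+\delta)(q^2+\delta)\delta(q-\delta)$, one has
\[
D(\delta)=q^4(\delta^2-3q\delta+q^2)+R(\delta),\qquad
R(\delta)=q^5+q^4+q^3-(3q^4+3q^3+2q^2+q)\delta-(q^3-3q^2)\delta^2+(2q^2-q+1)\delta^3+\delta^4 .
\]

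The gap is in the step you defer as ``bookkeeping'': your accounting of $R$ is incomplete, and the estimate you describe does not close. First, $R$ has a second negative term, $-(q^3-3q^2)\delta^2$ (negative for $q>3$), which you never mention. Second, the positive contributions you list (the constant $q^5$ term and the floor $q^4(\delta^2-3q\delta+q^2)\ge\tfrac{q^4(2q+1)}{4}$) do not dominate the two negative terms once these are estimated on $0\le\delta\le\tfrac{q-1}{2}$: at the endpoint,
\[
\frac{q^4(2q+1)}{4}+q^5+q^4+q^3-(3q^4+3q^3+2q^2+q)\tfrac{q-1}{2}-(q^3-3q^2)\tfrac{(q-1)^2}{4}
=\frac{-q^5+10q^4-q^3+5q^2+2q}{4},
\]
which is negative for all $q\ge 10$. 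The inequality survives only because of the positive cubic term $(2q^2-q+1)\delta^3$, and this term cannot be kept by a pointwise lower bound (since $\delta$ may be $0$); it must be played off against the negative terms. One way to do this is to check that $R'(\delta)\le-(3q^4+3q^3+2q^2+q)+6q^2\delta^2+3\delta^2+4\delta^3<0$ on $[0,\tfrac{q-1}{2}]$, so that $D(\delta)\ge\tfrac{q^4(2q+1)}{4}+R(\tfrac{q-1}{2})=\tfrac{1}{16}(27q^4+12q^3+10q^2+12q-1)>0$. The paper instead sidesteps all of this by exhibiting an exact identity in which every summand is visibly nonnegative,
\[
2D(\delta)=2q^4g(\delta)+(q-1-2\delta)\bigl(3q^4+3q^3+2q^2+q+(q-2q^2)\delta^2\bigr)
+2\delta^4+2\delta^3+(3q^2+q)\delta^2+\tfrac52 q^4+3q^3+q^2+q,
\]
where the needed $\delta^3$ contribution is generated by the product $(-2\delta)\cdot(q-2q^2)\delta^2$. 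So your strategy can be completed, but the step you label routine is precisely where the content of the proof lies, and as written your version of it fails for large $q$.
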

\begin{proof}
First note that the upper bound on $\delta$ implies that
$\delta\le\frac{1}{2}(q-1)$. Using the lower bound for $\tilde b_{q+1}$ from the
previous lemma we find 
\begin{align*}
&
2(q-\delta)\left(\tilde b_{q+1}(q+1)q-|\L|(|\L|-1)\delta\right)
\\
& \ge 2q^4\cdot g(\delta)
+(q-1-2\delta)(-2\delta^2q^2+\delta^2q+3q^4+3q^3+2q^2+q)\\
& +2\delta^4+2\delta^3+q\delta^2+3q^2\delta^2+q+q^2+3q^3+\frac{5}{2} q^4 \,,
\end{align*}
with
\[
g(\delta):=(q^2-\frac{1}{2}q-\frac{1}{4}-3q\delta+\delta^2).
\]
The smaller zero of $g$ is $\delta_1=\frac12(3q-\sqrt{5q^2+2q+1})$. Hence, if
$\delta\le\delta_1$, then $\delta\le \frac12(q-1)$ and $g(\delta)\ge 0$, and
therefore $|\L|(|\L|-1)\delta<\tilde b_{q+1}(q+1)q$.
\end{proof}

\begin{Le}\label{le:q3q}
If $\delta \le \frac{1}{2}(3q-\sqrt{5q^2+2q+1})$, then there exists a
hyperbolic quadric $\q^+(3,q)$ contained in $\M$. 
\end{Le}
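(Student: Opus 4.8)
The plan is to work throughout with the \emph{fully covered} lines, i.e.\ the lines contained in $\M$. Apart from the $|\L|$ lines of $\L$ these are exactly the $\tilde{b}_{q+1}$ lines of $\G\setminus\L$ counted in Lemma~\ref{bound_on_bqplus1}; write $\G_0$ for this latter set. I would use two standard facts about $\S=\q^-(5,q)$: any two skew lines span a solid meeting $\S$ in a hyperbolic quadric $\q^+(3,q)$, in which they lie in one regulus $\R$; and the lines of $\S$ meeting both of them are precisely the $q+1$ lines of the opposite regulus $\R'$. The first step is to reduce the statement to a combinatorial one: \emph{if some regulus carries at least $\delta+2$ fully covered lines, then $\M$ contains a $\q^+(3,q)$.} Indeed, each line of the opposite regulus then meets those $\delta+2$ lines in $\delta+2$ distinct covered points, so by the gap in Lemma~\ref{Qminus_basic}(b) (namely $\tilde{b}_i=0$ for $\delta+1<i<q+1$) it is itself fully covered; feeding this back, every line of the first regulus acquires $q+1$ covered points, and the whole $\q^+(3,q)$ lands in $\M$. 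So I would assume, for a contradiction, that no regulus carries $\delta+2$ fully covered lines.

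The heart of the proof is a double count of the ordered triples $(\ell_1,\ell_2,m)$ with $\ell_1,\ell_2\in\L$, $m\in\G_0$, and $m$ meeting $\ell_1$ and $\ell_2$ in two distinct points. Since a generalized quadrangle has no triangles, $\ell_1$ and $\ell_2$ are forced to be skew, so $m$ is a transversal of the pair and hence lies in its opposite regulus $\R'$. For the lower bound I fix $m\in\G_0$: as $m$ is fully covered, each of its $q+1$ points carries a line of $\L$, so the number of triples on $m$ is $\sum_{P\ne Q\in m}w(P)w(Q)\ge(q+1)q$. Summing over $\G_0$ gives at least $\tilde{b}_{q+1}(q+1)q$ triples.

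For the upper bound I must show that each skew pair $(\ell_1,\ell_2)$ admits at most $\delta$ fully covered transversals; this is the delicate step. By the standing assumption $\R'$ is not wholly covered, so it has a line $n$ with a point $Y\in n\setminus\M$. Let $k$ be the number of fully covered lines of $\R'$ and let $\ell_Y$ be the unique line of $\R$ through $Y$. Then $\ell_Y$ meets these $k$ lines in $k$ distinct covered points; as $\ell_Y\notin\L$ (it contains the uncovered point $Y$), no line of $\L$ meets $\ell_Y$ twice, so these $k$ points lie on $k$ distinct lines of $\L$, whence $\ell_Y$ meets at least $k$ lines of $\L$. The transversal $n$ meets $\ell_1$ and $\ell_2$, hence at least two lines of $\L$. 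Applying the identity $\sum_i b_i(Y)(i-1)=\delta$ from Lemma~\ref{Qminus_basic}(a) at $Y$, and using that every line on $Y$ meets a line of $\L$ (blocking property, so all summands are nonnegative), the two distinct lines $\ell_Y$ and $n$ already contribute $(k-1)+1=k$. Hence $k\le\delta$, and the fully covered transversals in $\G_0$, being among the $k$ fully covered lines of $\R'$, number at most $\delta$. Therefore the count of triples is at most $|\L|(|\L|-1)\delta$.

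Combining the two estimates yields $\tilde{b}_{q+1}(q+1)q\le|\L|(|\L|-1)\delta$, contradicting Lemma~\ref{le:analyse}. Thus some regulus carries $\delta+2$ fully covered lines, and by the first reduction $\M$ contains a $\q^+(3,q)$. I expect the real obstacle to be exactly the upper bound: the naive combination of the gap and the no-$\q^+(3,q)$ assumption bounds the transversals only by $\delta+1$, which is just too weak to cross Lemma~\ref{le:analyse}. The extra unit is recovered solely through the auxiliary uncovered point $Y$ and the observation that the transversal $n$ through $Y$ consumes one full unit of the total excess $\delta=\sum_i b_i(Y)(i-1)$; getting the bookkeeping at $Y$ exactly right is where the care is needed.
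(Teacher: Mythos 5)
Your proposal is correct and is essentially the paper's proof rearranged contrapositively: the same triple count $(\ell_1,\ell_2,m)$, the same appeal to Lemma~\ref{le:analyse}, and the same use of Lemma~\ref{Qminus_basic}~(a) at a hole of the quadric, where the transversal through the hole (meeting $\ell_1,\ell_2$) supplies one unit of excess and the line of the other regulus through the hole (meeting the covered transversals) supplies the rest. The paper runs this forward --- averaging yields a pair with $\delta+1$ covered transversals, and the hole argument then shows its quadric lies in $\M$ --- whereas you run it backward to get the per-pair bound of $\delta$ under your no-covered-regulus assumption; the mechanism, and the key lemmas invoked, are identical.
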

\begin{proof}
Count triples $(l_1,l_2,g)$, where $l_1,l_2$ are skew lines of
$\L$ and $g \not \in \L$ is a line meeting $l_1$ and $l_2$ and being completely
contained in $\M$. Then 
\[
|\L|(|\L|-1)z\ge \tilde{b}_{q+1}(q+1)q\,,
\]
where $z$ is the average number of transversals contained in $\M$ and not contained 
in $\L$, of two skew lines of $\L$. By Lemma~\ref{le:analyse}, we find that $z > \delta$.
Hence, we find two skew lines $l_1,l_2\in\L$ such that $\delta+1$ of
their transversals are contained in $\M$. The lines $l_1$ and $l_2$ generate a
hyperbolic quadric $\q^+(3,q)$ contained in $\q^-(5,q)$, denoted by $\Q^+$. If
some point $P$ of $\Q^+$ is not contained in $\M$, then the line on it meeting
$l_1,l_2$ has at least two points in $\M$ and the second line of $\Q^+$ on it
has at least $\delta+1$ points in $\M$. This is not possible (cf.
Lemma~\ref{Qminus_basic} (a)). Hence $\Q^+$ is contained in $\M$. 
\end{proof} 

\begin{Le}\label{le:q4q}
If $\delta \le \frac{1}{2}(3q-\sqrt{5q^2+2q+1})$, then $\M$ contains a
parabolic quadric $\q(4,q)$. 
\end{Le}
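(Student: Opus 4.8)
The plan is to extend the hyperbolic quadric $\Q^+=\q^+(3,q)\subseteq\M$ produced by Lemma~\ref{le:q3q} to a parabolic quadric. Write $\Sigma_3=\langle\Q^+\rangle$ for the solid it spans and $\ell=\Sigma_3^\perp$ for its polar line. Since $\Q^+$ is a plus-type section of the minus-type $\q^-(5,q)$, the binary form induced on $\ell$ is anisotropic, so $\ell$ is external to $\q^-(5,q)$ and meets $\Sigma_3$ trivially. The $q+1$ hyperplanes through $\Sigma_3$ are exactly the polar hyperplanes $R^\perp$ with $R\in\ell$, and each meets $\q^-(5,q)$ in a parabolic quadric $\Q_R:=R^\perp\cap\q^-(5,q)=\q(4,q)$ containing $\Q^+$ as a hyperbolic hyperplane section. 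As every point of $\q^-(5,q)\setminus\Sigma_3$ lies in exactly one such hyperplane, the $\Q_R$ partition the points of $\q^-(5,q)$ outside $\Q^+$, and it suffices to find an index $R_0$ with $\Q_{R_0}\subseteq\M$.

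First I would establish a local ``all or nothing'' dichotomy for each $\Q_R$. Suppose $U\in\Q_R\setminus\Q^+$ is not covered. The $q+1$ generators of $\Q_R$ through $U$ pairwise meet only at $U$ and cover the $q^2+q$ points of $\Q_R$ collinear with $U$; each meets the hyperplane $\Sigma_3$ in a point of $\Q^+\subseteq\M$, so at least $q+1$ of these collinear points are covered and the uncovered ones among them number at most $f_R$, the total number of uncovered points of $\Q_R$. Since a generator carrying $j$ covered points meets at least $j$ lines of $\L$, the generators through $U$ contribute at least $(q^2+q-f_R)-(q+1)=q^2-1-f_R$ to $\sum_i b_i(U)(i-1)$, which equals $\delta$ by Lemma~\ref{Qminus_basic}~(a). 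Hence $f_R\ge q^2-1-\delta$ whenever $\Q_R$ contains an uncovered point: each $\Q_R$ is either contained in $\M$ or misses at least $q^2-1-\delta$ of its points, and it is enough to exhibit one quadric missing fewer points than that.

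The crux is the \emph{concentration} step: producing a quadric $\Q_{R_0}$ whose covered points leave fewer than $q^2-1-\delta$ gaps. Here one must use that $\Q^+$ is covered and that, by $W\le\delta(q+2)$ (Lemma~\ref{Wbound}), $|\M|\ge q^3+q^2+q+1-\delta$ is only just short of $|\q(4,q)|$. The decisive structural fact is that $\Q^+$ already blocks every generator of every $\Q_R$ (such a generator meets $\Sigma_3$ inside $\Q^+$), so the generators of $\q^-(5,q)$ still to be met are precisely the $q^2(q-1)^2(q+1)\approx q^5$ generators disjoint from $\Sigma_3$; a line of $\L$ contained in $\Q^+$ blocks none of these, so they must all be met by the at most $q^2-1+\delta$ lines of $\L$ not contained in $\Q^+$. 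A line meeting $\Sigma_3$ in a single point pours all $q$ of its remaining points into one quadric $\Q_R$ and blocks exactly the $q(q^2-q)$ exterior generators through them, while a line disjoint from $\Sigma_3$ spreads one point into each quadric. Since each of these $\approx q^2$ lines can block only $\approx q^3$ of the $\approx q^5$ exterior generators, the blocking is essentially perfect, and I would argue that any dispersal of these lines across several quadrics wastes enough capacity to leave some exterior generator unblocked; this forces almost all of them to be generators of a single quadric $\Q_{R_0}$, which then has at most about $\delta$ uncovered points.

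Combining the two steps finishes the proof: the distinguished $\Q_{R_0}$ misses fewer than $q^2-1-\delta$ points, so by the dichotomy it is entirely covered and $\M\supseteq\Q_{R_0}=\q(4,q)$. I expect the concentration step to be the main obstacle. A naive average over the $q+1$ quadrics guarantees only about $q^2$ covered points outside $\Q^+$ in the best quadric, whereas roughly $q^3$ are needed, so the argument cannot be a pigeonhole count; it must instead exploit the rigidity forced by the near-perfect blocking of the exterior generators together with the near-extremal size of $\M$.
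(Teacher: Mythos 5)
Your geometric setup is sound and coincides with the paper's: the $q+1$ hyperplane sections through $\Sigma_3=\langle\Q^+\rangle$ are parabolic quadrics $\q(4,q)$ partitioning $\q^-(5,q)\setminus\Q^+$, and your ``dichotomy'' is a correct use of Lemma~\ref{Qminus_basic}~(a); indeed it is the same observation the paper makes, namely that a hole of such a quadric is perpendicular to at most $\delta$ covered points outside $\Q^+$, because each of the $q+1$ generators through the hole already carries a covered point of $\Q^+$. The genuine gap is the concentration step, which you yourself flag as the main obstacle: you never produce a quadric $\Q_{R_0}$ with fewer than $q^2-1-\delta$ holes, and the capacity heuristic you sketch cannot be made to work as stated. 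The blocking of exterior generators is not ``essentially perfect'': even if all $q^2+1+\delta$ lines of $\L$ meet $\Sigma_3$, each blocks up to about $q(q^2-q)$ exterior generators, for a total of roughly $q^5-q^4+q^3$, while only $q^2(q-1)^2(q+1)=q^5-q^4-q^3+q^2$ need to be blocked --- a slack of order $q^3$, i.e.\ the size of a single line's contribution, so a union bound forces nothing. Worse, a line of $\L$ disjoint from $\Sigma_3$ blocks \emph{more} exterior generators (about $q^3-2q$) than a line meeting $\Sigma_3$ does (about $q^3-q^2$), so raw capacity actually favours dispersal; any rigidity argument would have to control the overlaps among the blocked sets, and dispersed lines tend to overlap less, working against you. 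In effect, the assertion that near-perfect blocking forces almost all of $\L$ into one hyperplane section is essentially the conclusion of Theorem~\ref{pr:q5qresult} itself, so your proof of the lemma presupposes an unproven form of the very result it is meant to feed. (Even granting concentration, your final inference that $\Q_{R_0}$ then has only ``about $\delta$'' holes also needs an argument.)

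The paper closes exactly this gap by a local bootstrap instead of a global count. Pigeonhole gives a quadric $\Q$ through $\Q^+$ with $c\ge q^2-q$ covered points outside $\Q^+$ --- far from covered, as you correctly note. Counting hole--point perpendicular pairs then yields a covered point $P\in\Q\setminus\Q^+$ perpendicular to at most $q\delta$ holes of $\Q$; since a line of $\Q$ through $P$ not contained in $\M$ carries at least $q-\delta$ holes (Lemma~\ref{Qminus_basic}~(b)), $P$ lies on $r\ge\delta+1$ lines of $\Q$ inside $\M$, meeting $\Q^+$ in a set $C'$ of $r$ points of the conic $C=P^\perp\cap\Q^+$. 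Every hole of $\Q\setminus P^\perp$ must be perpendicular to at least $r-\delta$ points of $C'$ (otherwise it lies on $\delta+1$ lines each with two covered points, contradicting Lemma~\ref{Qminus_basic}~(a)), and a pair count over $C'$ bounds the number of holes of $\Q$ by $q\delta+(\delta+1)q(q-2)<\frac{1}{2}q(q^2-1)$. Feeding this improved bound on $c$ back into the first count shows $P$ is perpendicular to fewer than $\delta$ holes, hence all $q+1$ lines of $\Q$ on $P$ lie in $\M$; then every hole would have to be perpendicular to all of $C$, which at most one point of $\Q$ other than $P$ can be, and Lemma~\ref{Qminus_basic}~(a) eliminates that last candidate. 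This iterative perpendicularity analysis is what has to replace your concentration heuristic.
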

\begin{proof}
Lemma~\ref{le:q3q} shows that $\M$ contains a hyperbolic
quadric $\q^+(3,q)$, which will be denoted by $\Q^+$. We also know that
$|\M|=|\L|(q+1)-W\ge q^3+q^2+q+1-\delta$ by Lemma~\ref{Wbound}. There are $q+1$
hyperplanes through $\Q^+$, necessarily intersecting $\q^-(5,q)$ in parabolic
quadrics $\q(4,q)$. 

Hence there exists a parabolic quadric $Q(4,q)$, denoted by $\Q$, containing
$\Q^+$ such that 
\[
c:=|(\Q\setminus\Q^+)\cap\M|\ge \frac{|\M|-(q+1)^2}{q+1}>q^2-q-1\,.
\]
Hence, $c\ge q^2-q$. From now on we mean in this proof by a hole of $\Q$ a point
of $\Q$ that is not in $\M$. Each of the $q^3-q-c$ holes of $\Q$ can be
perpendicular to at most $\delta$ points of $(\Q\setminus\Q^+)\cap\M$ (cf.
Lemma~\ref{Qminus_basic} (a)). Thus we find a point $P\in
(\Q\setminus\Q^+)\cap\M$ that is perpendicular to at most
\[
\frac{(q^3-q-c)\delta}{c}\le q\delta
\]
holes of $\Q$. The point $P$ lies on $q+1$ lines of $\Q$ and if such a line is
not contained in $\M$, then it contains at least $q-\delta$ holes of $\Q$ (cf.
Lemma~\ref{Qminus_basic} (b)). Thus the number of lines of $\Q$ on $P$ that are
not contained in $\M$ is at most $q\delta/(q-\delta)$. The hypothesis on $\delta$
guarantees that this number is less than $q+1-\delta$. Thus, $P$ lies on at
least $r\ge \delta+1$ lines of the set $\Q$ that are contained in $\M$. These
meet $\Q^+$ in $r$ points of the conic $C:=P^\perp\cap \Q^+$. Denote this set of
$r$ points by $C'$.

Assume that $\Q\setminus P^\perp$ contains a hole $R$. For $X\in C'$, the hole
$R$ has a unique neighbor $Y$ on the line $PX$; if this is not the point $X$,
then the line $RY$ has at least two points in $\M$, namely $Y$ and the point
$RY\cap \Q^+$. So if $|R^\perp \cap C'| = \emptyset$, then there are at least $r
\ge \delta + 1$ lines on the hole $R$ with at least two points in $\M$. 
This contradicts Lemma~\ref{Qminus_basic}~(a). Therefore $|R^\perp\cap
C'|\ge r-\delta\ge 1$.  As every point of $C'$ lies on $q+1$ lines of $\Q$, two
of which are in $\Q^+$ and one other is contained in $\M$, then every point of
$C'$ has at most $(q-2)q$ neighbors in $\Q$ that are holes. Counting pairs
$(X,Y)$ of perpendicular points $X\in C'$ and holes $R\in\Q\setminus P^\perp$,
it follows that $\Q\setminus P^\perp$ contains at most $r(q-2)q/(r-\delta)\le
(\delta+1)q(q-2)$ holes. Since $P^\perp\cap\Q$ contains at most $q\delta$
holes, we see that $\Q$ has at most $q\delta+(\delta+1)q(q-2)$ holes. As
$\delta\le (q-1)/2$, this number is less than $\frac{1}{2}q(q^2-1)$. Hence, $c >
|\Q|-|\Q^+|-\frac12q(q^2-1)=\frac12q(q^2-1)$.
It follows that $P$ is perpendicular to at most
\[
\frac{(q^3-q-c)\delta}{c}<\delta
\]
holes of $\Q$. This implies that all $q+1$ lines of $\Q$ on $P$ are contained in
$\M$. Then every hole of $\Q$ must be connected to at least $q+1-\delta$
and thus all points of the conic $C$. Apart from $P$, there is only one such
point in $\Q$, so $\Q$ has at most one hole. Then Lemma~\ref{Qminus_basic}~(a)
shows that $\Q$ has no hole. 
\end{proof}

\begin{Le}\label{le:q5qfinal}
If $\M$ contains a parabolic quadric $\q(4,q)$, denoted by $\Q$, and $|\L|\le
q^2+q$, then $\L$ contains a cover of $\Q$.
\end{Le}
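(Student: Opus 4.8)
The plan is to separate $\L$ according to the hyperplane $H$ of the ambient $\pg(5,q)$ that cuts out $\Q=\S\cap H$, and then to show that the lines of $\L$ already lying inside $\Q$ cover $\Q$. First I would record the basic dichotomy: a line $l\in\L$ is a generator of $\S=\q^-(5,q)$, so $l\subseteq\S$; if $l\subseteq H$ then $l\subseteq\S\cap H=\Q$, and otherwise $l$ meets $H$, hence $\Q$, in exactly one point. Write $\L_0=\{l\in\L: l\subseteq\Q\}$ and $\L_1=\L\setminus\L_0$, and put $a=|\L_0|$, $b=|\L_1|$. Since a cover of $\Q$ consists of generators of $\Q$, that is, of lines lying in $\Q$, any cover contained in $\L$ must be a subset of $\L_0$; so it suffices to prove that $\L_0$ covers every point of $\Q$.

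I would argue by contradiction. Suppose $\L_0$ misses some point, and let $U$ be the nonempty set of \emph{holes}, i.e.\ points of $\Q$ lying on no line of $\L_0$; put $u=|U|\ge 1$. Because $\Q\subseteq\M$, every hole is covered by some line of $\L_1$, and, as each line of $\L_1$ meets $\Q$ in a single point, distinct holes require distinct lines of $\L_1$, whence $b\ge u$. Together with the hypothesis $|\L|\le q^2+q$ this yields the upper bound
\[
a=|\L|-b\le q^2+q-u .
\]

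The decisive step, and the one needing care, is to contradict this by a lower bound for $a$ read off from a single hole $P$. In the generalized quadrangle $\Q$ the set $P^\perp\cap\Q$ is a cone with vertex $P$ over a conic and so has $q^2+q+1$ points. Every line $l\in\L_0$ avoids $P$ (as $P$ is a hole), so by the quadrangle axiom $l$ meets $P^\perp\cap\Q$ in exactly the unique point of $l$ collinear with $P$, which I denote $Y(l)\ne P$. A point of $(P^\perp\cap\Q)\setminus\{P\}$ is covered by $\L_0$ precisely when it equals some $Y(l)$, so the covered points there form the image of the map $l\mapsto Y(l)$ and thus number at most $a$. On the other hand $(P^\perp\cap\Q)\setminus\{P\}$ has $q^2+q$ points, of which at most $u-1$ are holes (the holes other than $P$), so at least $q^2+q-(u-1)$ of them are covered; hence $a\ge q^2+q-u+1$. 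This contradicts the previous bound. Therefore $U=\varnothing$, so $\L_0$ covers $\Q$ and $\L_0\subseteq\L$ is the required cover. The only subtle point in the whole argument is the counting on the cone: one must use that each line of $\L_0$ contributes exactly one point to $P^\perp\cap\Q$, so that the covered cone-points cannot outnumber the lines of $\L_0$.
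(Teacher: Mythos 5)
Your proof is correct, but it follows a genuinely different route from the paper's, which is a three-line direct argument. The paper fixes an \emph{arbitrary} point $P\in\Q$ and notes that all $q^2+q+1$ points of $P^\perp\cap\Q$ lie in $\M$, hence each lies on a line of $\L$; since $|\L|\le q^2+q$, pigeonhole gives a single line of $\L$ containing \emph{two} points of $P^\perp\cap\Q$. Such a line lies in the two hyperplanes $P^\perp$ and $\langle \Q\rangle$, hence inside the cone $P^\perp\cap\Q$, and since the base of that cone is a conic (which contains no line), the line must pass through the vertex $P$. Thus every point of $\Q$ lies on a line of $\L$ contained in $\Q$, and these lines form the desired cover --- no contradiction, no decomposition. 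Both arguments hinge on the same numerical coincidence $|\L|\le q^2+q<q^2+q+1=|P^\perp\cap\Q|$, but you replace the paper's one geometric step (a secant of the tangent cone that is a line of $\q^-(5,q)$ must lie in the cone and hence pass through its vertex) by purely incidence-theoretic bookkeeping: the split $\L=\L_0\cup\L_1$ according to the hyperplane $\langle\Q\rangle$, the injection of the hole set into $\L_1$ giving $a\le q^2+q-u$, and the GQ-axiom count on the punctured cone at one hole giving $a\ge q^2+q-u+1$. What your version buys is that it never needs to argue about lines meeting the cone twice; what it costs is the contradiction scaffolding, and it is less informative in one respect: the paper's argument shows directly that \emph{every} point of $\Q$ lies on a line of $\L_0$, rather than merely refuting the existence of holes.
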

\begin{proof}
Consider a point $P \in \Q$. As $|P^\perp \cap \Q|=q^2+q+1$, some line of $\L$
must contain two points of $P^\perp \cap \Q$. Then this line is contained in
$\Q$ and contains $P$.
\end{proof}

In this subsection we assumed that $\L$ contains no pencil. The assumption that
$\delta \le \frac{1}{2}(3q-\sqrt{5q^2+2q+1})$ then implies that $\L$ contains a
cover of a $\q(4,q)\subseteq \q^-(5,q)$. Hence, we may conclude the following
theorem.

\begin{Th}\label{pr:q5qresult}
If $\L$ is a generator blocking set of $\q^-(5,q)$, $|\L|=q^2+1+\delta$, $\delta
\le \frac{1}{2}(3q-\sqrt{5q^2+2q+1})$, then $\L$ contains a pencil of $q^2+1$
lines through a point or $\L$ contains a cover of an embedded $\q(4,q) \subset
\q^-(5,q)$.
\end{Th}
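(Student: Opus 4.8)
The plan is to reduce the theorem to the lemmas already established in this subsection, all of which were proved under the standing assumption that $\L$ contains no pencil. I would therefore begin with the obvious dichotomy: either $\L$ contains the pencil of $q^2+1$ lines through a point, in which case the first alternative of the conclusion holds immediately, or $\L$ contains no pencil, and it is this second case that the remainder of the argument must handle.

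Before invoking the lemmas, I would record that the hypothesis $\delta \le \frac12(3q-\sqrt{5q^2+2q+1})$ is strong enough to satisfy every numerical threshold appearing in the chain. As already noted in the proof of Lemma~\ref{le:analyse}, this bound forces $\delta \le \frac12(q-1)$; in particular $\delta < q-1 = s-1$, so the basic counting results of Lemma~\ref{Qminus_basic} apply, and also $\delta \le \frac{q-1}{2}$, so Lemmas~\ref{Wbound} and \ref{bound_on_bqplus1} apply as well. This single observation clears the way for the entire sequence of lemmas.

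The core of the argument is then just an assembly. Applying Lemma~\ref{le:q4q}, whose hypothesis is exactly $\delta \le \frac12(3q-\sqrt{5q^2+2q+1})$, I obtain that the covered point set $\M$ contains a parabolic quadric $\q(4,q)$, which I denote by $\Q$. Since $\delta \le \frac12(q-1) \le q-1$, we have
\[
|\L| = q^2+1+\delta \le q^2+q,
\]
so the hypotheses of Lemma~\ref{le:q5qfinal} are met. That lemma then yields that $\L$ contains a cover of $\Q$, which is the second alternative of the conclusion.

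The proof of the theorem itself carries no real obstacle: all of the difficulty is concentrated in the preceding lemmas, most of all the intricate hole-counting of Lemma~\ref{le:q4q} and the transversal-averaging of Lemma~\ref{le:q3q}, which together upgrade a single embedded $\q^+(3,q)$ into a fully covered $\q(4,q)$. The only point that requires care here is the bookkeeping: one must confirm that the lone hypothesis on $\delta$ really does imply each of the separate conditions $\delta < s-1$, $\delta \le \frac{q-1}{2}$, and $|\L| \le q^2+q$ demanded by the various lemmas, so that the chain closes without gaps.
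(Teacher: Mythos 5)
Your proposal is correct and follows essentially the same route as the paper: the paper's own proof is precisely this dichotomy (pencil versus no pencil) followed by the assembly of Lemmas~\ref{le:q4q} and~\ref{le:q5qfinal} under the standing no-pencil assumption of the subsection. Your bookkeeping is also accurate, since $\delta \le \frac{1}{2}(3q-\sqrt{5q^2+2q+1})$ does imply $\delta \le \frac{q-1}{2}$ (as noted in the proof of Lemma~\ref{le:analyse}), which in turn gives $\delta < s-1$ and $|\L| \le q^2+q$.
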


\subsection{The case $\S=\h(4,q^2)$}
In this subsection, $\S = \h(4,q^2)$, so $(s,t) = (q^2,q^3)$. We suppose that $\L$
contains no pencil and that $|\L|=q^3+1+\delta$, and we show that this implies
that $\delta \geq q-3$.
The set $\M$ of covered points must block all the lines of $\h(4,q^2)$. 
It follows from \cite{DBM2005} that $|\M| \geq q^5+q^2$, and hence 
$W = |\L|(q^2+1) - |\M| \leq (q^2+1)(q+\delta)$.

\begin{Le}\label{WboundH}
If $\delta < q-1$, then $W\le \delta(q^2+3)$. 
\end{Le}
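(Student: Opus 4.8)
The plan is to follow the proof of Lemma~\ref{Wbound} almost verbatim, now with the parameters $(s,t)=(q^2,q^3)$ of $\h(4,q^2)$ and the a priori estimate $W\le(q^2+1)(q+\delta)$ recorded just before the statement. First I would let $\B$ denote the set of lines not in $\L$ that meet between $2$ and $\delta+1$ lines of $\L$, and count the flags $(l,m)$ with $l\in\L$, $m\in\B$ and $l$ meeting $m$; this number is $\sum_{i=2}^{\delta+1}b_i i\le 2\sum_{i=1}^{\delta+1}b_i(i-1)$, since $b_i\le b_i(i-1)$ for $i\ge 2$. Feeding this into Lemma~\ref{Qminus_basic}(e), which here reads $(q^2-\delta)\sum_{i=1}^{\delta+1}b_i(i-1)\le(q^5-q^3-\delta)(q^2+1)\delta+W\delta$, and inserting $W\le(q^2+1)(q+\delta)$, the $\delta$-terms cancel inside the bracket and I obtain, using the identity $(q^2+1)(q^5-q^3+q)=q^7+q$, the clean estimate
\[
\sum_{i=2}^{\delta+1}b_i i\le\frac{2\delta(q^7+q)}{q^2-\delta}=:c.
\]

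Next I would average: some line $l$ of $\L$ meets at most $c/|\L|$ lines of $\B$. Writing $\B_1$ for the lines not in $\L$ meeting exactly one line of $\L$, I claim that a point lying on no line of $\B_1$ lies on at least $q^3-q-\delta$ lines of $\B$. Indeed, by Lemma~\ref{Qminus_basic}(f) (applicable since $\L$ contains no pencil) such a point carries at most $\delta+1$ lines of $\L$ and fewer than $t/s+1=q+1$ lines off $\L$ that are completely contained in $\M$; every remaining line through it has a hole, hence meets between $2$ and $\delta+1$ lines of $\L$ by Lemma~\ref{Qminus_basic}(b) together with the assumption that the point lies on no $\B_1$-line, so it belongs to $\B$. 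A short computation shows that $\delta<q-1$ forces $c/|\L|<2(q^3-q-\delta)$; if two distinct points of $l$ each failed to lie on a $\B_1$-line, then $l$ would meet at least $2(q^3-q-\delta)$ distinct lines of $\B$, more than $c/|\L|$, a contradiction. Hence at most one point of $l$ fails, so $l$ contains $x\ge q^2$ points $P_0$ each lying on a line $m\in\B_1$; since $m$ meets $l$ and no other line of $\L$, it meets $\M$ only in $P_0$.

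Finally I would apply Lemma~\ref{Qminus_basic}(d) to each of these $x$ points. Since a point of $\M\setminus l$ is collinear with exactly one point of $l$, summing the $x$ inequalities gives $(x-1)\sum_{P\in\M\setminus l}(w(P)-1)\le x\delta q^2$, so with $x\ge q^2$ and $\delta<q-1$,
\[
\sum_{P\in\M\setminus l}(w(P)-1)\le\frac{x\delta q^2}{x-1}\le\frac{\delta q^4}{q^2-1}=\delta(q^2+1)+\frac{\delta}{q^2-1}<\delta(q^2+1)+1,
\]
whence this integer sum is at most $\delta(q^2+1)$. The $x$ points of $l$ lying on a $\B_1$-line have weight $1$, while the at most one remaining point of $l$ lies on at most $\delta+1$ lines of $\L$ by Lemma~\ref{Qminus_basic}(f), so $\sum_{P\in l}(w(P)-1)\le\delta$. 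Adding the two contributions yields $W\le\delta(q^2+1)+\delta=\delta(q^2+2)$, hence a fortiori the claimed $W\le\delta(q^2+3)$.

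The main obstacle I anticipate is purely computational: verifying the averaging inequality $c/|\L|<2(q^3-q-\delta)$ throughout the range $\delta<q-1$ (including small~$q$), since everything downstream — the bound $x\ge q^2$ and hence the factor $q^2+1$ in the final estimate — hinges on it. The geometric input (that a $\B_1$-line through $P_0$ meets $\M$ only in $P_0$, so that Lemma~\ref{Qminus_basic}(d) applies, and that the absence of a pencil lets Lemma~\ref{Qminus_basic}(f) cap the weights) is exactly as in the $\q^-(5,q)$ case and should transfer without change.
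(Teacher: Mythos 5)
Your proof is correct and follows the paper's own argument for this lemma essentially step for step (the flag count over $\B$, averaging to find a good line $l\in\L$, the $\B_1$-line argument via Lemma~\ref{Qminus_basic}~(f), and the application of Lemma~\ref{Qminus_basic}~(d) summed over the points of $l$); the only deviation is that you retain the sharper intermediate bound $c=2\delta(q^7+q)/(q^2-\delta)$ where the paper coarsens to $c=2(q^6+1)$, which lets you conclude that at most one (rather than at most two) point of $l$ misses a $\B_1$-line, giving $x\ge q^2$ instead of $x\ge q^2-1$ and the marginally stronger conclusion $W\le\delta(q^2+2)$. Your anticipated obstacle, the inequality $c/|\L|<2(q^3-q-\delta)$, does hold throughout the range $\delta\le q-2$ (the worst case $\delta=q-2$ reduces to $q^7+5q^5-7q^4+4q^3+3q^2-4q+4>0$), so the argument goes through as written.
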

\begin{proof}
Denote by $\B$ the set of all lines not in $\L$, meeting exactly $i$ lines of
$\L$ for some $i$, with $2\le i\le \delta+1$. We count the number of pairs
$(l,m)$, $l \in \L$, $m \in \B$, $l$ meets $m$. The number of these pairs is
$\sum_{i=2}^{\delta +1}b_i i$. 

It follows from
Lemma~\ref{H_basic} (e), $W \leq (q^2+1)(q+\delta)$, and $\delta < q-1$, that
\begin{eqnarray*}
\sum_{i=2}^{\delta+1}b_i i & \le & 2\sum_{i=1}^{\delta+1}b_i(i-1) \le
\frac{2(q^5-q^3-\delta)(q^2+1)\delta+2W\delta}{q^2-\delta} \\
&\le&\frac{2(q^2+1)\delta(q^5-q^3+q)}{q^2-\delta}
\le 2(q^6+1)=:c
\end{eqnarray*}

Hence, some line $l$ of $\L$ meets at most $\lfloor c/|\L|\rfloor$
lines of $\B$. Denote by $\B_1$ the set of lines not in $\L$ that 
meet exactly one line of $\L$. If a point $P$ does not lie on a line of $\B_1$,
then it lies on at least $q^3-q-\delta$ lines of $\B$ (by
Lemma~\ref{Qminus_basic} (f) and since $\L$ contains no pencil). As $\delta <
q-1$, then $c/|\L|< 3(q^3-q-\delta)$, so at most two points of $l$
can have this property. Thus $l$ has $x\ge q^2-1$ points $P_0$ that lie on a
line of $B_1$, so $l$ is the only line of $\L$ meeting such a line. Apply
Lemma~\ref{H_basic}~(d) on these $x$ points. As every point not on $l$ is
collinear with at most one of these $x$ points, it follows that
\begin{eqnarray*}
\sum_{P\notin l}(w(P)-1)
&\le& \frac{x\delta q^2}{x-1}\le \delta
(q^2+1)+\frac{2\delta}{q^2-2} < \delta(q^2+1)+1.
\end{eqnarray*}
Hence, $\sum_{P\notin l}(w(P)-1) \le \delta(q^2+1)$.

All but at most two points of $l$ lie on a line of $\B_1$, so $l$ is the only line
of $\L$ on these at least $q^2-1$ points. At most two points of $l$ can be
contained in more than one line of $\L$, but each such point is contained in at
most $\delta+1$ lines of $\L$ by Lemma~\ref{Qminus_basic}~(f). Hence $\sum_{P\in
l}(w(P)-1) \le 2\delta$, and therefore $W\le\delta(q^2+3)$.
\end{proof} 

\begin{Le}\label{bound_on_bq2plus1}
If $\delta\le q-2$, then
\[
\tilde{b}_{q^2+1}\ge q^4+q-\delta-\frac{(q^5+2q^3-2q\delta -q+2)\delta}{q^2-\delta}.
\]
\end{Le}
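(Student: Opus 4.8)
The plan is to mimic the double-counting scheme of Lemma~\ref{bound_on_bqplus1}, now with the parameters $(s,t)=(q^2,q^3)$ of $\h(4,q^2)$. First I would count the incident pairs $(P,l)$ with $P\in\M$ and $l$ a line of the GQ. Since each covered point lies on $t+1=q^3+1$ lines, each line of $\L$ carries $s+1=q^2+1$ covered points, and a line outside $\L$ with exactly $i$ covered points is counted $i$ times, this gives
\[
|\M|(q^3+1)=|\L|(q^2+1)+\sum_{i=1}^{q^2+1}\tilde b_i\, i.
\]
Subtracting the total number of lines $(q^5+1)(q^3+1)=|\L|+\sum_{i=1}^{q^2+1}\tilde b_i$ (using $\tilde b_0=0$ from Lemma~\ref{H_basic}~(b)) yields the identity
\[
|\L|q^2+\sum_{i=1}^{q^2+1}\tilde b_i(i-1)=(q^3+1)\bigl(|\M|-(q^5+1)\bigr).
\]

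Next I would feed in the size information. Writing $|\M|=|\L|(q^2+1)-W$ with $|\L|=q^3+1+\delta$, the right-hand side becomes $(q^3+1)\bigl(q^3+q^2+\delta(q^2+1)-W\bigr)$, and the bound $W\le\delta(q^2+3)$ of Lemma~\ref{WboundH} (valid since $\delta\le q-2<q-1$) turns the bracket into at least $q^3+q^2-2\delta$, giving a lower bound for $|\L|q^2+\sum_i\tilde b_i(i-1)$. To isolate $\tilde b_{q^2+1}$, I would invoke Lemma~\ref{H_basic}~(b) once more: for $i\ge2$ the only possibly nonzero terms are those with $2\le i\le\delta+1$ together with $i=q^2+1$, so $\sum_i\tilde b_i(i-1)=\sum_{i=2}^{\delta+1}\tilde b_i(i-1)+\tilde b_{q^2+1}q^2$. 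The middle sum is controlled from above by combining Lemma~\ref{H_basic}~(c) with the estimate of Lemma~\ref{H_basic}~(e); with $W\le\delta(q^2+3)$ one obtains
\[
(q^2-\delta)\sum_{i=2}^{\delta+1}\tilde b_i(i-1)\le(q^5-q^3)(q^2+1)\delta+2\delta^2.
\]

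Putting these pieces together and substituting $|\L|=q^3+1+\delta$ isolates $\tilde b_{q^2+1}q^2$ from below, after which dividing by $q^2$ and collecting the fractional contributions over the common denominator $q^2(q^2-\delta)$ should deliver the stated inequality. The main difficulty is not conceptual but the final algebraic simplification: one must check that the accumulated error terms — the $2\delta^2$ from Lemma~\ref{H_basic}~(e), the $2\delta$-shift coming from the bound $W\le\delta(q^2+3)$, and the factor $q^2$ introduced in the denominator by the division — recombine exactly into the single fraction $\frac{(q^5+2q^3-2q\delta-q+2)\delta}{q^2-\delta}$, with the spurious $q^2$ in the denominator cancelling against the numerator. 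Care is also needed to confirm that the hypothesis $\delta\le q-2$ suffices at every appeal to Lemma~\ref{WboundH} and Lemma~\ref{H_basic}.
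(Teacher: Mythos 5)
Your proposal is correct and follows the paper's proof essentially step for step: the same incidence count, the same subtraction of the total line count, the same substitution $|\M|=|\L|(q^2+1)-W$ with Lemma~\ref{WboundH}, the same use of Lemmas~\ref{H_basic}~(b), (c) and (e) to bound $\sum_{i=2}^{\delta+1}\tilde b_i(i-1)$ by $\bigl((q^5-q^3)(q^2+1)\delta+2\delta^2\bigr)/(q^2-\delta)$, and the same isolation of $\tilde b_{q^2+1}$. The algebraic verification you flagged does close exactly: the error terms combine over the common denominator $q^2(q^2-\delta)$ into the numerator $\delta q^2\bigl(q^5+2q^3-2q\delta-q+2\bigr)$, so the factor $q^2$ cancels precisely as you predicted.
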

\begin{proof}
We count the number of incident pairs $(P,l)$, $P \in \M$ and $l$ a
line of $\h(4,q^2)$, to see
\[
|\M|(q^3+1) = |\L|(q^2+1) + \sum_{i=1}^{q^2+1}\tilde{b}_i i \,.
\]
As $\h(4,q^2)$ has $(q^3+1)(q^5+1)=|\L| +
\sum_{i=1}^{q^2+1}\tilde{b}_i$ lines, then
\begin{eqnarray*}
|\L|q^2+\sum_{i=1}^{q^2+1}\tilde{b}_i(i-1)&=&
|\L|(q^2+1)+\sum_{i=1}^{q^2+1}\tilde{b}_ii-(q^3+1)(q^5+1)
\\
 & = & |\M|(q^3+1)-(q^5+1)(q^3+1) \\
 & = & (q^3+1)(q^3+q^2+\delta(q^2+1)) - W(q^3+1) \\
 & \ge & (q^3+1)(q+1)q^2 - 2\delta(q^3+1).
\end{eqnarray*}
From Lemmas \ref{Qminus_basic} (c) and (e) and Lemma~\ref{WboundH}, we have
\[
(q^2-\delta)\sum_{i=2}^{\delta+1}\tilde{b}_i(i-1) \leq
(q^2-\delta)\sum_{i=2}^{\delta+1}b_i(i-1)\le (q^5-q^3)(q^2+1)\delta+2\delta^2.
\]
Together this gives
\[
(|\L|+\tilde{b}_{q^2+1})q^2\ge
(q^3+1)(q+1)q^2-2\delta(q^3+1)-\frac{(q^5-q^3)(q^2+1)\delta+2\delta^2}{q^2-\delta}.
\]
Using $|\L|=q^3+1+\delta$, the assertion follows.
\end{proof}

\begin{Le}\label{le:analyse2}
If $\delta \le q-4$, then $|\L|(|\L|-1)3q <
\tilde{b}_{q^2+1}(q^2+1)q^2$.
\end{Le}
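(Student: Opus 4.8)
The plan is to replace $\tilde{b}_{q^2+1}$ by the explicit lower bound of Lemma~\ref{bound_on_bq2plus1}, which applies because $\delta\le q-4\le q-2$, and thereby reduce the claim to the positivity of a single polynomial in $q$ and $\delta$. It suffices to prove
\[
\left(q^4+q-\delta-\frac{(q^5+2q^3-2q\delta-q+2)\delta}{q^2-\delta}\right)(q^2+1)q^2 > |\L|(|\L|-1)\,3q,
\]
since the left-hand factor is a lower bound for $\tilde{b}_{q^2+1}$. As $\delta\le q-4<q^2$, the factor $q^2-\delta$ is positive, so I would multiply through by $q^2-\delta$ to clear the denominator. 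Substituting $|\L|=q^3+1+\delta$ and using $|\L|(|\L|-1)=q^6+2q^3\delta+q^3+\delta^2+\delta$ turns the inequality into the assertion that a fixed polynomial $F(q,\delta)$ is positive; all that remains is an elementary, if lengthy, estimate.

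Before organising the computation I would isolate the dominant balance, which also explains why the hypothesis takes the form $\delta\le q-4$ and why the constant $3$ appears in the statement. The largest term on the left-hand side is $q^9(q-\delta)$, whereas the largest term coming from $(q^2-\delta)\,|\L|(|\L|-1)\,3q$ is $3q^9$; since $\delta\le q-4$ gives $q-\delta\ge 4$, this leading term alone already exceeds the right-hand side by at least $q^9$. The constant in $3q$ is thus matched exactly to the $4$ in $q-4$. Guided by this, and mirroring the treatment of Lemma~\ref{le:analyse}, I would collect $F(q,\delta)$ into a sum of manifestly nonnegative pieces. Since the governing inequality is now linear rather than quadratic, I expect a decomposition of the shape
\[
F(q,\delta)=(q-4-\delta)\,P(q,\delta)+R(q,\delta),
\]
with $P$ and $R$ polynomials having nonnegative coefficients for $q\ge 4$ and $\delta\ge 0$; the factor $q-4-\delta$ is nonnegative precisely by hypothesis, and $R$ carries the surplus coming from the leading term.

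The only real obstacle is the bookkeeping needed to exhibit $F$ in this form: several intermediate coefficients have opposite signs, and the work lies in checking that, once the factor $q-4-\delta$ has been extracted, the remainder $R$ genuinely has nonnegative coefficients. As in the elliptic case it is convenient to multiply the whole inequality by $2$ at the outset, so that no half-integer coefficients occur and every comparison is between integers. I would also observe at the start that $\delta\ge 0$ together with $\delta\le q-4$ forces $q\ge 4$, so there are no small-$q$ exceptions to worry about and the nonnegativity of the coefficients of $P$ and $R$ may be verified uniformly in this range.
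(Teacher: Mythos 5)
Your proposal is correct and is essentially the paper's own proof: you substitute the lower bound of Lemma~\ref{bound_on_bq2plus1} (valid since $\delta\le q-4\le q-2$), clear the denominator by multiplying by $q^2-\delta>0$, and decompose the resulting polynomial as $(q-4-\delta)P(q,\delta)+r(q,\delta)$ --- the paper takes $P(q,\delta)=(q^6-\delta)(q^3+q^2+5q+5\delta+21)$ --- and your leading-term observation that $q-\delta\ge 4$ plays against the constant $3$ is exactly the balance that makes this work. The only caveat is that the paper's remainder $r(q,\delta)$ is not coefficient-wise nonnegative (it contains terms such as $-6\delta q^3$, $-\delta q^2$ and $-84\delta$) but is only shown nonnegative under the hypothesis $\delta\le q-4$, so your hope for a manifestly nonnegative $R$ must be relaxed to that weaker but still sufficient statement; also, unlike the elliptic case, no preliminary multiplication by $2$ is needed, as no half-integer coefficients occur here.
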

\begin{proof}
First note that by the assumption on $\delta$, we may use the lower bound for
$\tilde b_{q^2+1}$ from the previous lemma, and so we find
\begin{align*}
&
(q^2-\delta)\left(\tilde b_{q^2+1}(q^2+1)q^2-|\L|(|\L|-1)3q\right)
\\
& \ge 
(q-4-\delta)(q^6-\delta)(q^3+q^2+5q+5\delta+21) + r(q,\delta)\,, 
\end{align*}
with 
\begin{align*}
r(q,\delta) & = (81+33\delta+5\delta^2)q^6+(1-2\delta+2\delta^2)q^5+(\delta+7\delta^2)q^4\\
&+(-2\delta^2-6\delta)q^3-\delta q^2+(\delta+3\delta^2+3\delta^3)q-84\delta -41\delta^2-5\delta^3  
\end{align*}
Since $r(q,\delta) \ge 0$ if $\delta \le q-4$, the lemma follows.
\end{proof}

\begin{Le}\label{le:h4qfinal}
If $\L$ contains no pencil, then $\delta \geq q-3$.
\end{Le}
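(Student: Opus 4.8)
The plan is to prove the contrapositive: assuming that $\L$ contains no pencil and that $\delta\le q-4$, I will derive a contradiction. Under the standing no-pencil hypothesis, Lemma~\ref{H_basic}(f) applies at every point $P$ and yields $w(P)\le\delta+1$, a bound I will call on at the end.

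First I would count the triples $(l_1,l_2,g)$ in which $l_1,l_2$ are skew lines of $\L$ and $g\in\M\setminus\L$ is a common transversal. Since a GQ contains no triangles, two lines of $\L$ through distinct points of a completely covered line $g$ are automatically skew and distinct, so each of the $\tilde b_{q^2+1}$ lines contributes at least $(q^2+1)q^2$ ordered pairs; hence the number of triples is at least $\tilde b_{q^2+1}(q^2+1)q^2$. On the other hand it is at most $|\L|(|\L|-1)$ times the largest number of transversals in $\M\setminus\L$ through a fixed skew pair. Comparing these two estimates with Lemma~\ref{le:analyse2} shows that some skew pair $l_1,l_2\in\L$ admits at least $3q+1$ transversals completely contained in $\M$.

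Next I would upgrade this to a completely covered hyperbolic quadric. All these transversals lie in the solid $\Sigma:=\langle l_1,l_2\rangle$ and, together with $l_1,l_2$, belong to the two opposite reguli of $\q^+(3,q^2)=\Sigma\cap\h(4,q^2)$, which sits inside a subquadrangle $\h(3,q^2)$. If a point $P$ of this $\q^+(3,q^2)$ were a hole, the line of the regulus of $l_1,l_2$ through $P$ would meet all $\ge 3q+1$ transversals in covered points, and hence at least $3q+1$ distinct lines of $\L$; but, containing the hole $P$, that line is not contained in $\M$, so by Lemma~\ref{H_basic}(b) it meets at most $\delta+1\le q-3<3q+1$ lines of $\L$, a contradiction. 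Thus $\q^+(3,q^2)\subseteq\M$, and in fact every line of either regulus lies in $\M$.

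Finally I would force a contradiction with $|\L|=q^3+1+\delta$. Writing $a$ and $b$ for the numbers of lines of the two reguli that are not in $\L$, the grid points lying on no $\L$-line of either regulus number $ab$; each such point must be covered by a line of $\L$ in neither regulus, and since a line meeting two lines of one regulus would lie in the opposite regulus, such a line meets at most one transversal and hence covers at most one of these $ab$ points. This gives $|\L|\ge 2(q^2+1)-a-b+ab$, which already excludes the case where $a$ is large. The hard part will be the complementary case, where almost the whole regulus of $l_1,l_2$ already lies in $\L$: there the purely local count is satisfiable, and I expect one must instead exploit that $\Sigma$ is a hyperplane of $\pg(4,q^2)$, so that every generator of $\h(4,q^2)$ meets $\Sigma$. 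Generators meeting $\Sigma$ in a grid point are blocked by the regulus, but the generators through the points of $\h(3,q^2)$ off the regulus — of which there are of order $q^8$ — must all be blocked by the at most $q^3-q^2+\delta$ lines of $\L$ off the regulus, and a careful incidence count (each such line blocks of order $q^5$ generators) should show this is impossible. Reconciling this global blocking estimate with the bound $w(P)\le\delta+1$ is the step I expect to be most delicate.
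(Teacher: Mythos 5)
Your opening triple count is correct and coincides exactly with the paper's first step: counting triples $(l_1,l_2,g)$ and invoking Lemma~\ref{le:analyse2} does produce a pair of skew lines $l_1,l_2\in\L$ with more than $3q$ common transversals completely contained in $\M$, and your observation that lines of $\L$ through distinct points of a covered line are pairwise skew is the right justification. The fatal problem is the next step. The solid $\Sigma=\langle l_1,l_2\rangle$ is a hyperplane of $\pg(4,q^2)$, and a hyperplane section of $\h(4,q^2)$ is never a hyperbolic quadric: it is either a nondegenerate hermitian surface $\h(3,q^2)$ or a cone $P\h(2,q^2)$, and since the cone contains no two skew lines, here $\Sigma\cap\h(4,q^2)=\h(3,q^2)$. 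In $\h(3,q^2)$ the $q^2+1$ common transversals of $l_1$ and $l_2$ do \emph{not} form one ruling of a grid with an ``opposite regulus'': dualizing to $\q^-(5,q)$, the transversals correspond to $\{P_1,P_2\}^\perp$ (an elliptic quadric $\q^-(3,q)$) for two non-collinear points $P_1,P_2$, and any point collinear with more than $q+1$ points of that $\q^-(3,q)$ must lie in $\{P_1,P_2\}^{\perp\perp}=\{P_1,P_2\}$. Translated back, the only lines of $\h(3,q^2)$ meeting more than $q+1$ of your transversals are $l_1$ and $l_2$ themselves. Hence there is no ``line of the regulus through a hole $P$'' meeting your $\geq 3q+1$ transversals, your hole-elimination step is vacuous, and the subsequent $ab$-count over two reguli concerns a configuration that does not exist. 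You have transplanted the picture from the $\q^-(5,q)$ case (Lemma~\ref{le:q3q}), where two skew lines of the quadric do generate a $\q^+(3,q)$; that picture is specific to quadrics and fails for hermitian varieties.

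The ingredient you are missing is the paper's \emph{upper} bound on the number of covered points in an arbitrary hermitian surface $\mathcal{H}=\h(3,q^2)\subset\h(4,q^2)$: by \cite{M1998} a cover of $\mathcal{H}$ needs at least $q^3+q$ lines, so $\mathcal{H}$ contains a hole $P$; the $q^3-q$ lines of $\h(4,q^2)$ through $P$ not lying in $\mathcal{H}$ must be blocked by pairwise distinct lines of $\L$ not contained in $\mathcal{H}$, so at most $q+1+\delta$ lines of $\L$ lie in $\mathcal{H}$, whence at most $|\L|+(q+1+\delta)q^2<(q^2+1)(2q+\delta+1)$ points of $\mathcal{H}$ are covered. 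Since your (pairwise skew) transversals put more than $3q(q^2+1)$ covered points inside the particular surface $\Sigma\cap\h(4,q^2)$, and $3q>2q+\delta+1$ exactly when $\delta<q-3$, the contradiction is immediate — no full coverage of the surface is needed (indeed, under the hypotheses the surface is provably far from covered, so your goal of establishing $\Sigma\cap\h(4,q^2)\subseteq\M$ is not attainable). Finally, note that your concluding ``global blocking'' estimate is, by your own account, only a sketch, so even structurally the proposal does not reach a contradiction.
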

\begin{proof}
Assume that $\delta < q-3$. Consider a hermitian variety $\h(3,q^2)$, denoted
by $\mathcal{H}$, contained in $\h(4,q^2)$. A cover of $\mathcal{H}$ contains at
least $q^3+q$ lines by \cite{M1998}, so $\mathcal{H}$ contains at least one hole
$P$. Of all lines through $P$ in $\h(4,q^2)$, $q^3-q$ are not contained in
$\mathcal{H}$. They must all meet a line of $\L$, so at most $q+1+\delta$ lines
of $\L$ can be contained in $\mathcal{H}$. Hence, at most
$|\L|+(q+1+\delta)q^2=2q^3+q^2+1+\delta(q^2+1)<(q^2+1)(2q+\delta+1)$ points of 
$\mathcal{H}$ are covered.

Now count triples $(l_1,l_2,g)$, where $l_1,l_2$ are skew lines of
$\L$ and $g \not \in \L$ is a line meeting $l_1$ and $l_2$ and being completely
contained in $\M$. Then
\[
|\L|(|\L|-1)z\ge \tilde{b}_{q^2+1}(q^2+1)q^2\,,
\]
where $z$ is the average number of transversals, contained in $\M$ but not
belonging to $\L$, of two skew lines of $\L$. By Lemma~\ref{le:analyse2}, we
find that $z > 3q$. So there exist skew lines $l_1$ and $l_2$ in $\cal L$ such
that at least $z$ transversals to both lines are contained in $\M$. These
transversals are pairwise skew, so the $H(3,q^2)$ induced in the $3$-space
generated by $l_1$ and $l_2$ contains at least $z(q^2+1)\ge
3q(q^2+1)>(q^2+1)(2q+\delta+1)$ points of $\M$. This is a contradiction. 
\end{proof}

We have shown that $\delta \geq q-3$ if $\L$ contains no pencil. Note that we
have no result for $q \in \{2,3\}$. Hence, we have proved the following result.

\begin{Th}\label{pr:h4q2result}
If $\L$ is a generator blocking set of $\h(4,q^2)$, $q>3$, $|\L|=q^3+1+\delta$,
$\delta < q - 3$, then $\L$ contains a pencil of $q^3+1$ lines through a point.
\end{Th}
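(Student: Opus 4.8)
The plan is to obtain Theorem~\ref{pr:h4q2result} immediately as the contrapositive of Lemma~\ref{le:h4qfinal}. Writing $|\L|=q^3+1+\delta$, the hypothesis is $\delta<q-3$. Lemma~\ref{le:h4qfinal} states that a pencil-free $\L$ must satisfy $\delta\ge q-3$; so under our hypothesis $\L$ cannot be pencil-free. Since a pencil is by definition the set of all generators through a point, and every point of $\h(4,q^2)$ lies on exactly $t+1=q^3+1$ generators, the pencil contained in $\L$ consists of $q^3+1$ lines through a single point, which is exactly the asserted conclusion. Thus I would spend essentially no effort on the theorem itself beyond matching the hypotheses to those of the lemma.

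The genuine work, which I take as already carried out in the preceding lemmas, is concentrated in Lemma~\ref{le:h4qfinal}. Its strategy is a transversal count: one counts triples $(l_1,l_2,g)$ with $l_1,l_2$ skew lines of $\L$ and $g\notin\L$ a transversal completely contained in $\M$, and compares this total with the lower bound for $\tilde{b}_{q^2+1}$ furnished by Lemma~\ref{bound_on_bq2plus1}, which in turn rests on the weight estimate $W\le\delta(q^2+3)$ of Lemma~\ref{WboundH}. Lemma~\ref{le:analyse2} then guarantees that the average number of such transversals exceeds $3q$, so some pair of skew lines of $\L$ spans a $3$-space meeting $\h(4,q^2)$ in an embedded $\h(3,q^2)$ carrying at least $3q(q^2+1)$ covered points. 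This exceeds the a priori ceiling $(q^2+1)(2q+\delta+1)$ on the number of covered points of such a subgeometry, which follows from the cover bound of \cite{M1998} together with the count of how many lines of $\L$ can lie inside the subgeometry. The resulting contradiction is what forces $\delta\ge q-3$ in the pencil-free case.

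The only point requiring a remark for the theorem as stated is the range of $q$, and I would flag this as the one place where the packaging is not automatic. For $q\in\{2,3\}$ the inequality $\delta<q-3$ admits no nonnegative integer $\delta$, so the statement is vacuous; this is precisely why the hypothesis $q>3$ is imposed. For $q\ge 4$ the range $0\le\delta<q-3$ is nonempty and Lemma~\ref{le:h4qfinal} applies verbatim. Consequently no new estimation is needed, and the single substantive obstacle --- showing that a pencil-free blocking set cannot be this small --- has already been surmounted in Lemma~\ref{le:h4qfinal}; the theorem is its clean restatement.
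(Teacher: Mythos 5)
Your proposal is correct and is exactly the paper's own route: the paper proves its lemmas under the standing assumption that $\L$ contains no pencil, culminating in Lemma~\ref{le:h4qfinal} ($\delta \ge q-3$ for pencil-free $\L$), and then states Theorem~\ref{pr:h4q2result} as the contrapositive, with $q>3$ imposed precisely because the statement is vacuous for $q\in\{2,3\}$. Your summary of the supporting chain (Lemma~\ref{WboundH} giving the bound on $W$, Lemma~\ref{bound_on_bq2plus1} bounding $\tilde{b}_{q^2+1}$, Lemma~\ref{le:analyse2} forcing $z>3q$, and the contradiction against the ceiling $(q^2+1)(2q+\delta+1)$ inside an embedded $\h(3,q^2)$) also matches the paper's argument.
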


\section{Polar spaces of higher rank}\label{sec:rankn}

Consider two point sets $V$ and $\B$ in a projective space, $V \cap \B =
\emptyset$. The {\em cone with vertex $V$ and base $\B$}, denoted by $V\B$, is
the set of points that lie on a line connecting a point of $V$ with a point of
$\B$. If $\B$ is empty, then the cone is just the set $V$.

In this section, we denote a polar space of rank $r$ by $\S_{r}$. The parameters
$(s,t)$ refer in this section always to $(q,q)$, $(q,q^2)$, $(q^2,q^3)$
respectively, for the polar spaces $\q(2n,q)$, $\q^-(2n+1,q)$, $\h(2n,q^2)$. The
term {\em polar space} refers from now on always to a finite classical polar
space. Consider a point $P$ in a polar space $\S$. If $\S$ is determined by a
polarity $\phi$ of the ambient projective space, which is true for all polar
spaces except for $\q(2n,q)$, $q$ even, then $P^\perp$ denotes the hyperplane
$P^\phi$. The set $P^\perp \cap \S$ is exactly the set of points of $\S$
collinear with $P$, including $P$. For any point set $A$ of the ambient
projective space, we define $A^\perp := \langle A \rangle^\phi$. 

For $\S=\q(2n,q)$, $q$ even, $P$ a point of $\S$, $P^\perp$ denotes the tangent
hyperplane to $\S$ at $P$. For any point set $A$ containing at least one point
of $\S$, we define the notation $A^\perp$ as
\[
A^\perp := \bigcap_{X \in A \cap \S} X^\perp\,.
\]

Using this notation, we can formulate the following property. Consider
any polar space $\S_n$ of rank $n$, and any subspace $\pi$ of dimension $l \leq
n-1$, completely contained in $\S_n$. Then it holds that $\pi^\perp \cap \S_n =
\pi\S_{n-l-1}$, a cone with vertex $\pi$ and base $\S_{n-l-1}$ a polar space of
the same type of rank $n-l-1$ \cite{Hirschfeld,HT1991}.

A minimal generator blocking set of $\S_{n}$, $n \geq 3$, can be
constructed in a cone as follows. Consider an $(n-3)$-dimensional subspace
completely contained in $\S_n$, hence $\pi_{n-3}^\perp \cap \S_{n} = \pi_{n-3}\S_2$.
Suppose that $\L$ is a minimal generator blocking set of $\S_2$, then $\L$
consists of lines. Each element of $\L$ spans together with $\pi_{n-3}$ a
generator of $\S_n$, and these $|\L|$ generators of $\S_n$ constitute a minimal
generator blocking set of $\S_n$ of size $|\L|$.

Using the smallest generators blocking sets of the mentioned polar spaces of
rank 2, we obtain examples of the same size in general rank, listed in 
Table~\ref{tab:ex}. The notation $\pi_i$ refers to an $i$-dimensional subspace.
When the cone is $\pi_i B$, the example consists of the generators through the
vertex $\pi_i$, contained in the cone $\pi_iB$, meeting the base of the cone in
the elements of the base set, and the size of the example equals the size of the
base set. We will call $\pi_i$ the {\em vertex} of the generator blocking set.

\begin{table}[h!]
\begin{center}
\begin{tabular}{|l|l|l|l|l|}
\hline
polar space & $(s,t)$ & cone & base set & dimension\\
\hline
$\q(2n,q)$ & $(q,q)$ & $\pi_{n-2}\q(2,q)$ & $\q(2,q)$ & $n+1$\\
 &  & $\pi_{n-3}\q^+(3,q)$ & a spread of $\q^+(3,q)$ & $n+1$\\
\hline
$\q^-(2n+1,q)$ & $(q,q^2)$ & $\pi_{n-2}\q^-(3,q)$ & $\q^-(3,q)$ & $n+2$\\
& & $\pi_{n-3}\q(4,q)$ & a cover of $\q(4,q)$ & $n+2$\\
\hline
$\h(2n,q^2)$ & $(q^2,q^3)$ & $\pi_{n-2}\h(2,q^2)$ & $\h(2,q^2)$ & $n+1$\\
\hline
\end{tabular}
\caption{small examples in rank $n$}\label{tab:ex}
\end{center}
\end{table}

The natural question is whether these examples are the smallest ones. The answer is
yes, and the following theorem, proved by induction on $n$, gives slightly more
information.

\begin{Th}\label{th:rankn}
\begin{compactenum}[\rm (a)]
\item Let $\L$ be a generator blocking set of $\q(2n,q)$, with $|\L| = q +
1 + \delta$. Let $\epsilon$ be such that $q+1+\epsilon$ is the size of the
smallest non-trivial blocking set in $\pg(2,q)$. If $\delta <
\mathrm{min}\{\frac{q-1}{2}, \epsilon\}$, then $\L$ contains one of the two
examples listed in Table~\ref{tab:ex} for $\q(2n,q)$. 
\item Let $\L$ be a generator blocking set of $\q^-(2n+1,q)$, with $|\L| = q^2 +
1 + \delta$. If $\delta \le \frac{1}{2}(3q-\sqrt{5q^2+2q+1})$, then $\L$
contains one of the two examples listed in Table~\ref{tab:ex} for $\q^-(2n+1,q)$.
\item Let $\L$ be a generator blocking set of $\h(2n,q^2)$, $q>3$, with $|\L| =
q^3 + 1 + \delta$. If $\delta < q-3$, then $\L$ contains the example listed in
Table~\ref{tab:ex} for $\h(2n,q^2)$.
\end{compactenum}
\end{Th}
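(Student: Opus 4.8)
The plan is to prove all three parts simultaneously by induction on the rank $n$, reducing rank $n$ to rank $n-1$ by a quotient construction. The base case $n=2$ is exactly the rank-$2$ material already proved: part~(a) is Proposition~\ref{pro:q4q}, part~(b) is Theorem~\ref{pr:q5qresult}, and part~(c) is Theorem~\ref{pr:h4q2result}; in each, the two possible conclusions (a pencil through a point, respectively a regulus or a cover of a subquadrangle) are precisely the two cone examples of Table~\ref{tab:ex} in rank $2$. Crucially, the bounds on $\delta$ in Theorem~\ref{th:rankn} do not depend on $n$, so the inductive hypothesis will be available with exactly the same constraint on $\delta$ at every level.

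For the inductive step I would fix $\S_n$ of rank $n\ge 3$ and a generator blocking set $\L$ with $|\L|=t+1+\delta$. Since $|\M|\le|\L|(s+1)$ is tiny compared with the number of points of $\S_n$ when $n\ge 3$, there is a hole $P\in\S_n\setminus\M$, and I would project from $P$. By the cone identity $P^\perp\cap\S_n=P\S_{n-1}$, the quotient $\bar\S:=(P^\perp\cap\S_n)/P$ is a polar space of rank $n-1$ of the same type (for $\q(2n,q)$ with $q$ even one uses the tangent-hyperplane meaning of $P^\perp$). For $l\in\L$ we have $P\notin l$, and moreover $l\not\subseteq P^\perp$, since otherwise $\langle P,l\rangle$ would be a totally singular subspace of dimension $n$, which is impossible; hence $l\cap P^\perp$ has projective dimension $n-2$ and its image $\bar l$ is a generator of $\bar\S$. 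The first key step is to show that $\bar\L:=\{\bar l:l\in\L\}$ is a generator blocking set of $\bar\S$: a generator $\bar h$ of $\bar\S$ lifts to a generator $h\ni P$ of $\S_n$; as $\L$ blocks $h$ it meets some $l\in\L$ in a point $Q$, and since $P$ is a hole we have $Q\neq P$ and $Q\in h\subseteq P^\perp$, so $\bar Q\in\bar h\cap\bar l$. Because $t+1\le|\bar\L|\le|\L|=t+1+\delta$, the inductive hypothesis applies to $\bar\L$ with the same $\delta$-bound and yields that $\bar\L$ contains one of the rank-$(n-1)$ cone examples $\bar E$ of Table~\ref{tab:ex}, with some vertex $\bar\pi$.

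The hard part will be lifting $\bar E$ back to a cone of $\S_n$, that is, reconstructing the actual vertex. From $\bar E$ one obtains at least $t+1$ generators $l\in\L$ whose projections $\bar l$ contain $\bar\pi$; unwinding, each such $l$ meets the lift $\Pi:=\langle P,\bar\pi\rangle$ (a totally singular subspace through $P$, of dimension one more than $\bar\pi$) in a hyperplane of $\Pi$ avoiding $P$. The obstruction is that the projection discards, for each $l$, its direction transversal to $P^\perp$, so this data alone does not force the $l$'s to share a subspace in $\S_n$; indeed $\Pi$ passes through the hole $P$ and is therefore never the true (covered) vertex $\pi_{n-2}$, respectively $\pi_{n-3}$. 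I would overcome this in one of two ways: either vary the hole $P$ and intersect the resulting lifted data to cancel the transversal ambiguity and locate a subspace lying on at least $t+1$ elements of $\L$, or argue directly in $\S_n$ by a counting argument generalising Lemma~\ref{Qminus_basic} (bounding how many elements of $\L$ a generator outside $\L$ can meet, and how the covered points cluster) to pin down such a subspace. Once a candidate vertex $\pi$ is found, one passes to $\pi^\perp\cap\S_n=\pi\S_{n-\dim\pi-1}$ and verifies, in the spirit of Lemmas~\ref{le:q3q}--\ref{le:q5qfinal}, that the $t+1$ generators through $\pi$ induce on the base the required rank-$\le 2$ configuration (an ovoid or conic, a cover of $\q(4,q)$, or a Hermitian curve), thereby identifying the cone; the two inductive outcomes for $\bar\pi$ lift to the two cone examples listed for $\q(2n,q)$ and $\q^-(2n+1,q)$.

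Finally, a few bookkeeping points must be monitored at every level. A hole exists at each rank because $|\P|$ dwarfs $|\L|(s+1)$ for $n\ge 3$; since any generator blocking set has at least $t+1$ elements (as observed in the discussion preceding Lemma~\ref{Qminus_basic}), one has $\bar\delta:=|\bar\L|-(t+1)\le\delta$, so the hypothesis on $\delta$ is preserved under the quotient. In part~(a) the condition $\delta<\min\{\tfrac{q-1}{2},\epsilon\}$ is exactly what is needed for both ingredients — $\delta<\epsilon$ to invoke the base case Proposition~\ref{pro:q4q}, and $\delta\le\tfrac{q-1}{2}$ to run the rank-reduction estimates — at each step of the induction.
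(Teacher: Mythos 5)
Your setup (induction on the rank, projecting $\L$ from a hole $P$ onto a generator blocking set of the rank-$(n-1)$ quotient, and invoking the induction hypothesis with the same bound on $\delta$) is exactly the paper's frame, and you correctly locate the obstruction: the projection forgets each generator's direction transversal to $P^\perp$, so the projected cone does not by itself produce a common subspace in $\S_n$. But at precisely that point your proposal stops being a proof. You offer two unexecuted options (``vary the hole and intersect'' or ``a counting argument generalising Lemma~\ref{Qminus_basic}''), whereas this lifting step is the actual content of Section~\ref{sec:rankn}: the paper first proves a recognition criterion (Lemma~\ref{fivespace}: if an $(n+1+e)$-dimensional subspace contains more than $\frac{t}{s}+1+\delta$ generators of $\L$, then $\L$ is one of the listed examples, itself resting on Lemmas~\ref{le:cases} and~\ref{fourspace}), and then, for $\q^-(2n+1,q)$ and $\h(2n,q^2)$, runs a delicate multi-hole argument (Lemma~\ref{le:general1}) that first pins down a single point $X$ lying on $t-\delta$ generators of $\L$ and only then traps those generators in an $(n+1+e)$-space via two further holes. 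None of this is routine, and your sketch gives no substitute for it.

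Two further gaps are structural rather than technical. First, your plan assumes the projected example always has a vertex $\bar\pi$ to lift; but for $n=3$ the induction basis can return an example with \emph{trivial} vertex --- a cover of $\q(4,q)$ inside $\q^-(5,q)$, or a regulus inside $\q(4,q)$ --- and then there is nothing to lift. The paper needs entirely separate arguments for these cases: Lemmas~\ref{sixspace} and~\ref{le:q7q} for $\q^-(7,q)$, and Lemmas~\ref{le:blocking}, \ref{le:proj:regulus}, \ref{le:q6qregulus} for $\q(6,q)$, where one reconstructs a genuine regulus of lines inside the elements of $\L$ before any cone can be identified. Second, your uniform inductive step cannot work as stated for $\q(2n,q)$: the analogue of Lemma~\ref{le:general1} holds there only under a much worse bound on $\delta$, which is why the paper develops a different mechanism for the parabolic case (nice points, Lemmas~\ref{le:nicepoint} through~\ref{le:anja_argument}, culminating in Lemma~\ref{le:general3}) to find an $(n-3)$-dimensional subspace on at least $q$ elements of $\L$ before applying Lemma~\ref{fivespace}. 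So while your strategy points in the right direction, the proof's essential difficulties --- the clustering criterion, the trivial-vertex cases at rank $3$, and the separate treatment of $\q(2n,q)$ --- are all left open.
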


\subsection{Preliminaries}

The following technical lemma will be useful.

\begin{Le}\label{le:cases}
\begin{compactenum}[\rm (a)]
\item If a quadric $\pi_{n-4}\q^+(3,q)$ or $\pi_{n-3}\q(2,q)$ in $\pg(n,q)$ is
covered by generators, then for any hyperplane $T$ of $\pg(n,q)$, at least $q-1$
of the generators in the cover are not contained in $T$.
\item If a quadric $\pi_{n-4}\q(4,q)$ or $\pi_{n-3}\q^-(3,q)$ in $\pg(n+1,q)$ is
covered by generators, then for any hyperplane $T$, at least $q^2-q$ of the
generators in the cover are not contained in $T$. 
\item If a hermitian variety $\pi_{n-3}\h(2,q^2)$ in $\pg(n,q^2)$ is covered by
generators, then for any hyperplane $T$ of $\pg(n,q^2)$, at least $q^3-q$ of the
generators in the cover are not contained in $T$.
\end{compactenum}
\end{Le}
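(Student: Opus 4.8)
The plan is to handle the three parts by one common argument, exploiting that in every case the set being covered is a cone $\pi B$ whose vertex $\pi$ is the radical and whose base $B$ is a rank-$1$ or rank-$2$ polar space lying in a subspace $\Sigma$ complementary to $\pi$ (so that $\dim\pi+\dim\Sigma+1$ equals the dimension of the ambient projective space). The structural fact I would establish first is that every generator $g$ of such a cone contains $\pi$ and has the form $g=\langle\pi,\gamma\rangle$ for a unique generator $\gamma=g\cap\Sigma$ of $B$; the map $g\mapsto\gamma$ is a bijection between the generators of the cone and those of $B$, and restricting a cover of $\pi B$ to $\Sigma$ turns it into a set $\Gamma$ of generators of $B$ that covers $B$.

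I would then split according to the position of $\pi$ relative to $T$. If $\pi\not\subseteq T$, then since every generator contains $\pi$, \emph{no} generator of the cone lies in $T$, so all members of the cover avoid $T$; it then suffices to note that a generator-cover of $B$ has at least as many members as the stated bound (for $\q(2,q)$, $\q^-(3,q)$ and $\h(2,q^2)$ the cover necessarily consists of all $|B|$ points, while for $\q^+(3,q)$ and $\q(4,q)$ a line-cover has at least $|B|/(q+1)$ members, namely $q+1$ and $q^2+1$).

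The substantive case is $\pi\subseteq T$. A dimension count then shows that $\rho:=T\cap\Sigma$ is a hyperplane of $\Sigma$, and that $g=\langle\pi,\gamma\rangle\subseteq T$ if and only if $\gamma\subseteq\rho$; hence the members of the cover lying in $T$ correspond exactly to the members of $\Gamma$ contained in $\rho$, and I must bound from below the number of members of $\Gamma$ \emph{not} contained in $\rho$. The key idea, and the step where a direct count fails, is to count the base-\emph{points} outside $\rho$ rather than the base-generators inside $\rho$: since $\Gamma$ covers $B$, its members not contained in $\rho$ must together cover all of $B\setminus\rho$, while each such member meets the hyperplane $\rho$ of $\Sigma$ in a subspace of codimension one inside itself and so covers only few points of $B\setminus\rho$ (one point when $\gamma$ is a point of $B$, exactly $q$ points when $\gamma$ is a line, since such a line meets $\rho$ in a single point). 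Dividing $|B\setminus\rho|$ by this per-generator contribution yields the required lower bound once the hyperplane-section sizes of the bases are inserted.

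The final step is therefore to record the intersection numbers $|B\cap\rho|$: a line meets the conic $\q(2,q)$ in at most $2$ points and the Hermitian curve $\h(2,q^2)$ in at most $q+1$ points; a plane meets $\q^-(3,q)$ in at most $q+1$ points and $\q^+(3,q)$ in at most $2q+1$ points; and a hyperplane meets $\q(4,q)$ in at most $(q+1)^2$ points. Subtracting from $|B|$ and dividing by the contribution ($1$ or $q$) gives $q-1$, $q$, $q^2-q$, $q^2-1$ and $q^3-q$ respectively, each at least the claimed bound. The main obstacle I anticipate is precisely the two rank-$2$ bases $\q^+(3,q)$ and $\q(4,q)$: a tangent (respectively hyperbolic) hyperplane section can already contain $2(q+1)$ base-lines, so estimating the generators inside $\rho$ directly is far too lossy; only the covering count on $B\setminus\rho$ rescues the bound, and it must be carried out being careful that a base-line not contained in $\rho$ covers exactly $q$, and not $q+1$, points of $B\setminus\rho$.
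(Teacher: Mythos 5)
Your proof is correct, and its overall skeleton agrees with the paper's: both split on whether the hyperplane $T$ contains the vertex (if not, no generator of the cone lies in $T$, and minimal cover sizes of the base finish the job), and both then reduce to the five non-degenerate bases --- you by cutting with a complement $\Sigma$ of the vertex, the paper by passing to the quotient space of the vertex, which is the same reduction in different clothing. Where you genuinely diverge is in the treatment of the two rank-$2$ bases. The paper handles $\q^+(3,q)$ via the fact that a plane contains at most two of its lines together with the minimal cover size $q+1$, and handles $\q(4,q)$ by a replacement argument: a hyperplane section containing lines (a $\q^+(3,q)$ or a cone over a conic) can itself be covered by $q+1$ lines, so deleting the cover lines inside $T$ and reinserting those $q+1$ lines again yields a cover, whence at least $(q^2+1)-(q+1)=q^2-q$ cover lines lie outside $T$. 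You instead run a single uniform count in all five cases: the generators not contained in $\rho=T\cap\Sigma$ must cover $B\setminus\rho$, and each covers at most $1$ (point-generator) or exactly $q$ (line-generator) points there, so their number is at least $|B\setminus\rho|$ divided by that contribution. Your method buys uniformity --- only maximal hyperplane-section sizes are needed, with no appeal to minimal cover sizes of $\q^+(3,q)$ and $\q(4,q)$ and no classification of which sections contain lines --- and it even gives marginally stronger bounds ($q$ and $q^2-1$ in place of $q-1$ and $q^2-q$); the paper's method buys brevity, disposing of each case in a line or two. Your closing observation that naively counting generators inside the section fails for $\q(4,q)$ (a hyperbolic section already holds $2(q+1)$ lines, giving only $q^2-2q-1$) is exactly the difficulty the paper's replacement argument is designed to sidestep.
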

\begin{proof}
\begin{compactenum}[\rm (a)]
\item This is clear if $T$ does not contain the vertex of the quadric (i.e. the
subspace $\pi_{n-4}$, $\pi_{n-3}$ respectively). If $T$
contains the vertex, then going to the quotient space of the vertex, it is
sufficient to handle the cases $\q(2,q)$ and $\q^+(3,q)$. The case $\q(2,q)$ is
degenerate but obvious, since any line contains at most two points of $\q(2,q)$.
So suppose that $C$ is a cover of $\q^+(3,q) \subset \pg(3,q)$, then $T$ is a
plane. If $T \cap \q^+(3,q)$ contains lines, then it contains exactly two lines
of $\q^+(3,q)$. Since at least $q+1$ lines are required to cover $\q^+(3,q)$,
at least $q-1$ lines in $C$ do not lie in $T$. 
\item Again, we only have to consider the case that $T$ contains the vertex, and
so it is sufficient to consider the two cases $\q^-(3,q)$ and $\q(4,q)$ in the
quotient geometry of the vertex $T$. For $\q^-(3,q)$, the assertion is obvious. Suppose
finally that $C$ is a cover of $\q(4,q)\subset \pg(4,q)$. Then $T$ has dimension
three. If $T\cap \q(4,q)$ contains lines at all, then $T\cap \q(4,q)$ is a
hyperbolic quadric $\q^+(3,q)$ or a cone over a conic $\q(2,q)$. As these can be
covered by $q+1$ lines and since a cover of $\q(4,q)$ needs at least $q^2+1$
lines, the assertion is obvious also in this case.
\item Now we only have to handle the case $\h(2,q^2)$.
Since all lines of $\pg(2,q^2)$ contain at most $q+1$ points of $\h(2,q^2)$, the
assertion is obvious.
\end{compactenum}
\end{proof}

From now on, we always assume that $\S_{n} \in \{\q(2n,q), \q^-(2n+1,q),
\h(2n,q^2)\}$. In this section, $\L$ denotes a generator blocking set of size
$|\L| = t+1+\delta$ of a polar space $\S_{n}$. 

Section~\ref{sec:rank2} was devoted to the case $n=2$ of
Theorem~\ref{th:rankn}~(b)~and~(c), the case $n=2$ of Theorem~\ref{th:rankn} (a)
is Proposition~\ref{pro:q4q}. The case $n=2$ serves as the induction basis. The
induction hypothesis is that if $\L$ is a generator blocking set of $\S_{n}$ of
size $t+1+\delta$, with $\delta < \delta_0$, then $\L$ contains one
of the examples listed in Table~\ref{tab:ex}. The number $\delta_0$ can be
derived from the case $n=2$ in Theorem~\ref{th:rankn}.

The polar space $\S_{n}$ has $\pg(2n+e,s)$ as ambient projective space. Here $e
= 1$ if and only if $\S_{n} = \q^-(2n+1,q)$, and $e=0$ otherwise. Call a point
$P$ of $\S_{n}$ a {\em hole} if it is not covered by a generator of $\L$. If $P$
is a hole, then $P^\perp$ meets every generator of $\L$ in an
$(n-2)$-dimensional subspace. In the polar space $\S_{n-1}$, which is induced in
the quotient space of $P$ by projecting from $P$, these $(n-2)$-dimensional
subspaces induce a generator blocking set $\L'$, $|\L'| \le |\L|$. Applying the
induction hypothesis, $\L'$ contains one of the examples of $\S_{n-1}$ described
in Table~\ref{tab:ex}, living in dimension $n+e$; we will denote this example by
$\L^P$. Hence, the $(n+1+e)$-space on $P$ containing the $(n-2)$-dimensional
subspaces that are projected from $P$ on the elements of $\L^P$, is a cone with
vertex $P$ and base the $(n+e)$-dimensional subspace containing a
minimal generator blocking set of $\S_{n-1}$ described in Table~\ref{tab:ex}. We
denote this $(n+1+e)$-space on $P$ by $S_P$.

\begin{Le}\label{fourspace}
Consider a polar space $\S_{n} \in \{\q(2n,q),\q^-(2n+1,q),\h(2n,q^2)\}$, and a
generator blocking set of size $t+1+\delta$. If $P$ is a hole and $T$ an
$(n+e)$-dimensional space $\pi$ on $P$ and in $S_P$, then at least $t-\frac{t}{s}$
generators of $\L$ meet $S_P$ in an $(n-2)$-dimensional subspace not contained
in $T$.
\end{Le}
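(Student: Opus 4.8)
The plan is to reduce the statement to Lemma~\ref{le:cases} by carrying out the projection from the hole $P$ that underlies the construction of $S_P$. Recall that, since $P$ is a hole, $P^\perp$ meets each generator of $\L$ in an $(n-2)$-space, and projecting $P^\perp$ from $P$ yields the polar space $\S_{n-1}$ in the quotient $\pg(2(n-1)+e,s)$; the images of these $(n-2)$-spaces form a generator blocking set $\L'$ of $\S_{n-1}$ with $|\L'|\le|\L|$, and by the induction hypothesis $\L'$ contains one of the Table~\ref{tab:ex} examples $\L^P$, living in an $(n+e)$-dimensional base which I will call $B$. By the very definition of $S_P$ we have $S_P=\langle P,B\rangle$, of dimension $n+1+e$, and $S_P\subseteq P^\perp$ since it is spanned by $P\in P^\perp$ together with subspaces contained in $P^\perp$.

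First I would record the elementary intersection fact that every generator $g\in\L$ whose image lies in $\L^P$ meets $S_P$ exactly in $g\cap P^\perp$. Indeed $S_P\subseteq P^\perp$ gives $g\cap S_P\subseteq g\cap P^\perp$, while $g\cap P^\perp\subseteq S_P$ because $S_P$ was defined to contain precisely these $(n-2)$-spaces; hence $g\cap S_P=g\cap P^\perp$ is $(n-2)$-dimensional. Since the map $g\mapsto\overline{g\cap P^\perp}$ is single-valued, distinct elements of $\L^P$ arise from distinct generators of $\L$.

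Next I would translate the hyperplane $T$ to the quotient. As $T$ is $(n+e)$-dimensional, contains $P$, and lies in the $(n+1+e)$-dimensional space $S_P$, it is a hyperplane of $S_P$ through $P$, so its image $\overline{T}$ under projection from $P$ is a hyperplane of $B$ (here $P\in T$ is exactly what is needed to drop the dimension by one). The elements of $\L^P$ form a cover by generators of one of the quadric or hermitian cones of Table~\ref{tab:ex} for $\S_{n-1}$, and in $B=\pg(n+e,s)$ these cones are precisely the cones $\pi_{n-3}\q(2,q)$, $\pi_{n-4}\q^+(3,q)$, $\pi_{n-3}\q^-(3,q)$, $\pi_{n-4}\q(4,q)$, $\pi_{n-3}\h(2,q^2)$ occurring in Lemma~\ref{le:cases}. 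Applying the appropriate part of Lemma~\ref{le:cases} to this cover and the hyperplane $\overline{T}$ gives that at least $q-1$, $q^2-q$, or $q^3-q$ of the generators in $\L^P$ are not contained in $\overline{T}$; in the three cases $(s,t)=(q,q),(q,q^2),(q^2,q^3)$ this count is exactly $t-\tfrac{t}{s}$.

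Finally I would lift back: if $m\in\L^P$ satisfies $m\not\subseteq\overline{T}$, then its preimage $g\cap P^\perp=g\cap S_P$ cannot be contained in $T$, for otherwise its projection would lie in $\overline{T}$. Choosing one generator of $\L$ for each of the at least $t-\tfrac{t}{s}$ such elements $m$ produces $t-\tfrac{t}{s}$ distinct generators of $\L$, each meeting $S_P$ in an $(n-2)$-space not contained in $T$, which is the assertion. I expect no genuine difficulty here: the geometric content is entirely supplied by Lemma~\ref{le:cases}, and the only care required is the bookkeeping that $S_P\subseteq P^\perp$, that $\overline{T}$ is a true hyperplane of $B$, and that the three bounds $q-1,\,q^2-q,\,q^3-q$ coincide with $t-\tfrac{t}{s}$ for the respective parameters.
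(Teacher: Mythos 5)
Your proof is correct and follows exactly the paper's (very terse) argument: project from the hole $P$, invoke the induction hypothesis to get the Table~\ref{tab:ex} example $\L^P$ covering the cone in the quotient, apply Lemma~\ref{le:cases} to the hyperplane $\overline{T}$ of the base, and lift back, noting that $q-1$, $q^2-q$, $q^3-q$ equal $t-\tfrac{t}{s}$ in the respective cases. Your write-up simply makes explicit the bookkeeping ($S_P\subseteq P^\perp$, distinctness of preimages, $\overline{T}$ being a hyperplane) that the paper leaves to the reader.
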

\begin{proof}
This assertion follows by going to the quotient space of $P$, and using
Lemma~\ref{le:cases} and the induction hypothesis of this section.
\end{proof}

We recall the following facts from \cite{HT1991}. Consider a quadric $\Q$ in a
projective space $\pg(n,q)$. An $i$-dimensional subspace $\pi_i$ of $\pg(n,q)$
will intersect $\Q$ again in a possibly degenerate quadric $\Q'$. If $\Q'$ is
degenerate, then $\pi_i \cap \Q = \Q' = R \Q''$, where $R$ is a subspace completely
contained in $\Q$, and where $\Q''$ is a non-singular quadric. We call $R$ the
{\em radical} of $\Q'$. Clearly, all generators of $\Q'$ contain $R$. We recall
that $\Q''$ does not have necessarily the same type as $\Q$.

Consider a hermitian variety $\mathcal{H}$ in a projective space $\pg(n,q^2)$.
An $i$-dimensional subspace $\pi_i$ of $\pg(n,q^2)$ will intersect $\mathcal{H}$
again in a possibly degenerate hermitian variety $\mathcal{H}'$. If
$\mathcal{H}'$ is degenerate, then $\pi_i \cap \mathcal{H} = \mathcal{H}' = R
\mathcal{H}''$, where $R$ is a subspace completely contained in $\mathcal{H}$,
and $\mathcal{H}''$ is a non-singular hermitian variety. We call $R$ the {\em
radical} of $\mathcal{H}'$. Clearly, all generators of $\mathcal{H}'$ contain
$R$.

\begin{Le}\label{fivespace}
Let $\L$ be a minimal generator blocking set of size $t+1+\delta$ of $\S_{n}$.
If an $(n+1+e)$-dimensional subspace $\Pi$ of $\pg(2n+e,s)$ contains more than
$\frac{t}{s}+1+\delta$ generators of $\L$, then $\L$ is one of the examples
listed in Table~\ref{tab:ex}.
\end{Le}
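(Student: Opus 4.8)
The plan is to show that if an $(n+1+e)$-dimensional subspace $\Pi$ contains many generators of $\L$, then these generators already form one of the cone examples of Table~\ref{tab:ex}, after which minimality forces $\L$ to coincide with this example. First I would examine how $\Pi$ meets the polar space $\S_n$. Since $\dim \Pi = n+1+e$ and the generators in $\Pi$ have dimension $n-1$, the intersection $\Pi \cap \S_n$ is a (possibly degenerate) polar space of the same type, whose generators are exactly the $(n-1)$-spaces of $\S_n$ lying in $\Pi$. Writing $\Pi \cap \S_n = R\,\S'$ with $R$ the radical (as recalled just before this lemma for quadrics and hermitian varieties), the generators of this induced polar space all contain $R$ and project, modulo $R$, to generators of a non-singular rank-$2$ polar space $\S'$ of the appropriate type in the quotient $\Pi/R$.

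\textbf{Reducing to rank 2.} The key step is to pass to the quotient by $R$. The generators of $\L$ lying in $\Pi$ induce in $\Pi/R$ a set $\L''$ of lines of the rank-$2$ polar space $\S'$, and by hypothesis $|\L''| > \frac{t}{s}+1+\delta$ (counting multiplicities is harmless since distinct generators through $R$ give distinct lines in the quotient). I would next argue that $\L''$ is in fact a generator blocking set of $\S'$: every generator of $\S'$ lifts to an $(n-1)$-space through $R$ that is a generator of $\S_n$ contained in $\Pi$, hence is met by some generator of $\L$; because a GQ contains no triangles, this forcing meeting must occur inside $\Pi$, so the blocking happens already in $\S'$. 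With $|\L''|$ strictly larger than the trivial lower bound $t+1$ by less than the rank-$2$ gap threshold, the rank-$2$ classification (Theorem~\ref{th:rank2}, Theorem~\ref{pr:q5qresult}, Theorem~\ref{pr:h4q2result}) applies and shows that $\L''$ contains a pencil or a rank-$2$ cover/spread as listed.

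\textbf{Lifting back and invoking minimality.} Lifting the rank-$2$ configuration through $R$ produces exactly one of the cone examples $\pi_i B$ of Table~\ref{tab:ex} sitting inside $\L$: the vertex is $R$ (suitably enlarged to the required dimension), and the base is the pencil, spread, or cover obtained in $\S'$. So $\L$ \emph{contains} one of the examples. The final move is to upgrade ``contains'' to ``equals'': since $\L$ is \emph{minimal} by hypothesis, and since a cone example is already itself a generator blocking set, no further generator of $\L$ can be essential — any extra generator would be superfluous, contradicting minimality. Hence $\L$ \emph{is} one of the examples.

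\textbf{Main obstacle.} The delicate point is the counting bookkeeping in the quotient: I must ensure that the strict inequality $|\Pi \cap \L| > \frac{t}{s}+1+\delta$ translates into $|\L''|$ exceeding $t+1$ by strictly less than the allowable gap $\delta$ for the relevant rank-$2$ theorem, so that the classification is genuinely applicable and does not merely give the degenerate ``too many lines'' regime. One must also verify that the dimension of the radical $R$ is exactly right so that $\Pi/R$ has the correct rank $2$ and the correct polar type; a wrong radical dimension would make $\S'$ degenerate or of the wrong kind. Checking that $\L''$ is genuinely a \emph{blocking set} of $\S'$ (rather than merely blocking those generators that happen to lie in $\Pi$) is where the no-triangle property of GQs does the real work, and this is the step I would write out most carefully.
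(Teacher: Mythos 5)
Your reduction-to-rank-2 plan founders on two points, and one of them is the actual heart of the lemma. The crucial gap is the step where you claim that the generators of $\L$ lying in $\Pi$ induce, modulo the radical $R$, a generator blocking set $\L''$ of the quotient: your justification (``because a GQ contains no triangles, this forcing meeting must occur inside $\Pi$'') does not work. The polar space $\S_{n}$ has rank $n\ge 3$, is not a generalized quadrangle, and contains triangles in abundance; and in any case a generator $g\subseteq\Pi$ of $\S_{n}$ is only guaranteed to meet \emph{some} element $h\in\L$ -- the meeting point lies in $\Pi$, but nothing forces $h$ itself to lie in $\Pi$, so the induced set $\L''$ need not block anything in the quotient. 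Establishing precisely this covering-from-inside-$\Pi$ property is what the paper's proof spends essentially all its effort on: first it shows $\Pi\cap\S_{n}$ has no hole, by applying Lemma~\ref{fourspace} to a hypothetical hole $P$ of $\Pi$ (since $S_P\cap\Pi$ has dimension at most $n+e$, at least $t-\frac{t}{s}$ generators of $\L$ meet $S_P$ in $(n-2)$-subspaces not contained in $\Pi$, so $\Pi$ would contain at most $\frac{t}{s}+1+\delta$ elements of $\L$). Note that this is the \emph{only} place where the hypothesis ``more than $\frac{t}{s}+1+\delta$ generators in $\Pi$'' is used; in your sketch that hypothesis is spent merely on bounding $|\L''|$, which is not where its force lies. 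Then, case by case, the paper shows that a point of $\Pi\cap\S_{n}$ missed by every generator of $\L$ inside $\Pi$ would force $|\L|\ge\left(\frac{t}{s}+1\right)s+1>t+1+\delta$, a contradiction; only after that does minimality finish the argument.

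Second, your structural claim about $\Pi\cap\S_{n}$ is wrong: the quotient modulo the radical need not be a rank-2 polar space ``of the same type'' (the paper explicitly warns that the non-singular part can change type). As the paper enumerates, $\Pi\cap\S_{n}$ can be $\pi_{n-2}\S_{1}$ with a rank-1 base ($\q(2,q)$, $\q^-(3,q)$ or $\h(2,q^2)$, giving the pencil examples), a cone over a rank-2 base ($\q^+(3,q)$, $\q(4,q)$ or $\h(3,q^2)$), or even $\pi_{n-4}\q^+(5,q)$, a rank-3 quotient, when $\S_{n}=\q^-(2n+1,q)$; the cases $\pi_{n-4}\q^+(5,q)$ and $\pi_{n-3}\h(3,q^2)$ are eliminated using lower bounds on \emph{cover} sizes from the literature, not by any rank-2 blocking-set classification. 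Moreover, even in the genuine rank-2 cases the quotient GQs are $\q^+(3,q)$, $\q(4,q)$, $\h(3,q^2)$ -- not the GQs $\q(4,q)$, $\q^-(5,q)$, $\h(4,q^2)$ treated in Section~\ref{sec:rank2} -- and what is needed there is information about small pointwise covers rather than blocking sets, so Theorems~\ref{th:rank2}, \ref{pr:q5qresult} and \ref{pr:h4q2result} do not apply in the way you invoke them. Your final step, upgrading ``$\L$ contains an example'' to ``$\L$ equals an example'' via minimality, is correct and matches the paper, but the route to it requires the machinery above.
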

\begin{proof}
First we show that $\Pi$ is covered by the generators of $\L$. Assume not and let
$P$ be a hole of $\Pi$. If $\Pi\cap \S_{n}$ is degenerate, then its radical is
contained in all generators of $\Pi\cap \S_{n}$, so $P$ is not in the radical.
Hence, $P^\perp\cap \Pi$ has dimension $n+e$ and thus $S_P\cap \Pi$ has dimension at
most $n+e$. Lemma~\ref{fourspace} shows that at least $t-\frac{t}{s}$ generators
of $\L$ meet $S_P$ in an $(n-2)$-subspace that is not contained in $\Pi$. Hence,
$\Pi$ contains at most $\frac{t}{s}+1+\delta$ generators of $\L$. This
contradiction shows that $\Pi$ is covered by the generators of $\L$. 

The subspace $\Pi$ is an $(n+1+e)$-dimensional subspace containing generators of
$\S_{n}$. This leaves a restricted number of possibilities for the structure of
$\Pi \cap \S_{n}$: $\Pi \cap \S_{n} \in \{\pi_{n-3}\q^+(3,q),
\pi_{n-2}\q(2,q)\}$ when $\S_n = \q(2n,q)$, $\Pi \cap \S_{n} \in
\{\pi_{n-4}\q^+(5,q), \pi_{n-3}\q(4,q), \pi_{n-2}\q^-(3,q)\}$ when $\S_n =
\q^-(2n+1,q)$, and  $\Pi\cap \S_{n} \in \{\pi_{n-3} \h(3,q^2),
\pi_{n-2}\h(2,q^2)\}$ when $\S_{n} = \h(2n,q^2)$.

{\bf Case 1:} $\Pi\cap \S_{n} = \pi_{n-2} \S_{1}$ ($\S_{1}=\q(2,q), \q^-(3,q)$, or
$\h(2,q^2)$).
\\
A generator of $\L$ contained in $\Pi$ contains the vertex $\pi_{n-2}$. If one of
the $t+1$ generators on $\pi_{n-2}$ is not contained in $\L$, then at least $s$
generators of $\L$ are required to cover its points outside of $\pi_{n-2}$. Hence,
if $x$ of the $t+1$ generators on $\pi_{n-2}$ are not contained in $\L$, then
$|\L|\ge t+1-x+xs$. Since $|\L| = t+1+\delta$, with $\delta < s-1$, this implies
$x=0$. So $\L$ contains the pencil of generators of $\pi_{n-2}\S_{1}$, and by the
minimality of $\L$, it is equal to this pencil.

{\bf Case 2:} $\Pi\cap \S_{n} \in \{\pi_{n-3}\q^+(3,q), \pi_{n-3} \q(4,q)\}$.
\\
Recall that $\Pi \cap \S_{n} = \pi_{n-3}\q^+(3,q)$ when $\S_n = \q(2n,q)$ and then
$(s,t) = (q,q)$, and that $\Pi \cap \S_{n} = \pi_{n-3}\q(4,q)$ when $\S_n =
\q^-(2n+1,q)$ and then $(s,t) = (q,q^2)$.

All generators of $\L$ contained in $\Pi$ must contain the vertex $\pi_{n-3}$.
We will show that the generators of $\L$ contained in $\Pi$ already cover $\Pi\cap
\S_{n}$; then $\L$ contains (by minimality) no further generator and thus
$\L$ is one of the two examples. 

Assume that some point $P$ of $\Pi\cap\S_n$ does not lie on any generator of $\L$
contained in $\Pi$. As all generators of $\L$ contained in $\Pi$ contain the vertex
$\pi_{n-3}$, then $P$ is not in this vertex. Hence, $P^\perp\cap \Pi\cap \S_{n}$ is a
pencil of $\frac{t}{s}+1$ generators $g_0,\dots,g_{\frac{t}{s}}$ on the
subspace $\pi_{n-2}=\erz{P,\pi_{n-3}}$. None of the generators $g_i$ is contained
in $\L$. Therefore, at least $s+1$ generators of $\L$ are required to cover
$g_i$. One such generator of $\L$ may contain the vertex $\pi_{n-2}$ and counts
for each generator $g_i$, but this still leaves at least $(\frac{t}{s}+1)s+1$ generators
in $\L$ necessary to cover all the generators $g_i$. But $|\L|<t+s$, a contradiction.

{\bf Case 3:} $\Pi \cap \S_{n} \in \{\pi_{n-4}\q^+(5,q), \pi_{n-3}\h(3,q^2)\}$, and we
 will show that this case is impossible.\\
Recall that $\Pi \cap \S_{n} = \pi_{n-4}\q^+(5,q)$ when $\S_n = \q^-(2n+1,q)$ and
then $(s,t) = (q,q^2)$, and that $\Pi \cap \S_{n} = \pi_{n-3}\h(3,q^2)$ when
$\S_n = \h(2n,q^2)$ and then $(s,t) = (q^2,q^3)$. In both cases, $\frac{t}{s} =
q$. Denote by $V$ the vertex of $\Pi\cap \S_n$.

All generators of $\L$ contained in $\Pi$ must contain the vertex $V$. We will
show that the generators of $\L$ contained in $\Pi$ already cover $\Pi\cap
\S_{n}$.

Assume that some point $P$ of $\Pi\cap\S_n$ does not lie on any generator of $\L$
contained in $\Pi$. As all generators of $\L$ contained in $\Pi$ contain the vertex $V$, 
then $P$ is not in $V$. When $\S_n = \q^-(2n+1,q)$, then $P^\perp\cap \Pi\cap \S_{n}$
contains $2(q+1)$ generators on the subspace $\pi = \erz{P,V}$. None of these 
generators is contained in $\L$. These $2(q+1)$ generators split into two classes, 
corresponding with the two classes of generators of the hyperbolic quadric $\q^+(3,q)$, 
the base of the cone $\pi \q^+(3,q) = P^\perp\cap \Pi\cap \S_{n}$. Consider one 
such class of generators, denoted by $g_0,\dots,g_{q}$. 
When $\S_n = \h(2n,q^2)$, then $P^\perp\cap \Pi\cap \S_{n}$ contains $q+1$ generators 
on the subspace $\pi = \erz{P,V}$, and none of these generators is contained in $\L$. 
Also denote these generators by $g_0,\dots,g_{q}$. 
So in both cases we consider $\frac{t}{s}+1=q+1$ generators $g_0,\dots,g_{q}$ on 
the subspace $\pi = \erz{P,V}$, not contained in $\L$. Consider now any 
generator $g_i$, then at least $s+1$ generators of $\L$ are required to cover
$g_i$. One such generator of $\L$ may contain the vertex $\pi$ and counts
for each generator $g_i$, but this still leaves at least $(\frac{t}{s}+1)s+1$
generators in $\L$ necessary to cover all the generators $g_i$. But $|\L|<t+s$,
a contradiction. 

Hence in the quotient geometry of the vertex $V$, the generators of $\L$ contained in $\Pi$ 
induce either a cover of $\q^+(5,q)$, which has size at least $q^2+q$ 
(see \cite{ES2001}) or a cover of $\h(3,q^2)$, which has size at least $q^3+q^2$ 
(see \cite{M1998}). In both cases, this is a contradiction with the assumed upper 
bound on $|\L|$.
\end{proof}

\subsection{The polar spaces $\q^-(2n+1,q)$ and $\h(2n,q^2)$}

This subsection is devoted to the proof of Theorem~\ref{th:rankn}~(b)~and~(c).

\begin{Le}\label{conic}
Suppose that $\C$ is a line cover of $\q(4,q)$ with $q^2+1+\delta$ lines. Then each
conic and each line of $\q(4,q)$ meets at most $(\delta+1)(q+1)$ lines of $\C$.
\end{Le}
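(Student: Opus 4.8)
The plan is to reduce the statement to a single double count of incidences between the points of the fixed object and the lines of the cover $\C$. First I would record the parameters: $\q(4,q)$ is a generalized quadrangle of order $(q,q)$, so it has $(q^2+1)(q+1)$ points and every point lies on exactly $q+1$ lines of $\q(4,q)$. For a point $P$ let $w(P)$ denote the number of lines of $\C$ through $P$; since $\C$ is a cover, $w(P)\ge 1$ for every $P$. Counting incident (point, line)-pairs gives $\sum_P w(P)=|\C|(q+1)=(q^2+1+\delta)(q+1)$, and subtracting $(q^2+1)(q+1)=\sum_P 1$ yields the global excess
\[
\sum_{P}(w(P)-1)=(q+1)\delta .
\]

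The second ingredient is that every line of $\C$ meets a fixed conic $C$ in at most one point. Here $C=\pi\cap\q(4,q)$ for a plane $\pi$ meeting the quadric in an irreducible conic, so $\pi$ contains no line of $\q(4,q)$; a generator line of $\C$ meeting $C$ in two points would be forced to lie in $\pi$, hence in $\pi\cap\q(4,q)=C$, which is impossible. Therefore the number of lines of $\C$ meeting $C$ equals $\sum_{P\in C}w(P)$. As $|C|=q+1$ and each local term $w(P)-1$ is nonnegative and at most the global excess, I obtain
\[
\sum_{P\in C}w(P)=(q+1)+\sum_{P\in C}(w(P)-1)\le (q+1)+(q+1)\delta=(\delta+1)(q+1),
\]
which is the claimed bound for conics.

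For a line $L$ of $\q(4,q)$ the same count applies, the only difference being that two distinct lines of a generalized quadrangle meet in at most one point, whereas $L$ meets itself in all $q+1$ of its points. Writing $N$ for the number of lines of $\C$ meeting $L$, I would note that $\sum_{P\in L}w(P)=N+q$ when $L\in\C$ (the line $L$ is then counted $q+1$ times instead of once) and $\sum_{P\in L}w(P)=N$ otherwise; in either case $N\le\sum_{P\in L}w(P)\le(\delta+1)(q+1)$ by the identical excess estimate, so the bound holds, and is in fact slightly stronger when $L\in\C$. The argument is otherwise routine; the only point requiring care, which I treat as the main obstacle, is the geometric claim that an irreducible plane conic on $\q(4,q)$ contains no line of the quadric, so that every line of $\C$ meets $C$ in at most one point. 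Once this is established, the two double counts are immediate.
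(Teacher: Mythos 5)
Your proof is correct and takes essentially the same approach as the paper: a single incidence count giving the total excess $\sum_P (w(P)-1)=\delta(q+1)$, followed by bounding the number of lines of $\C$ meeting the conic (or line) by the weight sum over its $q+1$ points, which is at most $(q+1)+\delta(q+1)$. The only difference is that you treat the claim that a line of $\C$ meets an irreducible conic in at most one point as the crux; it is true, but not actually needed, since a line meeting the conic in two points would merely be counted twice in the weight sum and the upper bound would still hold.
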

\begin{proof} If $w(P)+1$ is defined as the number of lines of $\C$ on a point
$P$, then the sum of the weights $w(P)$ over all points of $\q(4,q)$ is
$\delta(q+1)$. Hence, a conic can meet at most $(\delta+1)(q+1)$ lines of $\C$,
and the same holds for lines.
\end{proof}

\begin{Le}\label{le:general1}
Suppose that $\S_{n} \in \{\q^-(2n+1,q),\h(2n,q^2)\}$, $n \geq 3$. Suppose that
$\L$ is a minimal generator blocking set of size $t+1+\delta$ of $\S_{n}$,
$\delta < \delta_0$. If there exists a hole $P$ that projects $\L$ on a
generator blocking set containing a minimal generator blocking set of $\S_{n-1}$
that has a non-trivial vertex, then $\L$ is one of the examples in
Table~\ref{tab:ex}.
\end{Le}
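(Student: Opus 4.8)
The plan is to locate, inside the ambient $\pg(2n+e,s)$, an $(n+1+e)$-dimensional subspace carrying more than $\frac{t}{s}+1+\delta$ generators of $\L$, and then to invoke Lemma~\ref{fivespace}. The hole $P$ and its induced example $\L^P$ will tell us where to look. Write $\br V$ for the (non-trivial) vertex of $\L^P$ in the quotient polar space $\S_{n-1}$, and let $\widehat V$ be its full preimage under the projection from $P$; thus $\widehat V$ is a totally singular subspace of $\S_{n}$ through $P$ with $\dim\widehat V=\dim\br V+1\ge 1$, and $\widehat V\subseteq P^\perp$. The first observation is that every generator $g\in\L$ whose projection lies in $\L^P$ meets $\widehat V$ in a hyperplane $U_g$ of $\widehat V$ that avoids $P$ and projects onto $\br V$. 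Here the lifted vertex $\widehat V$ runs \emph{through} the hole $P$, whereas the vertex of the example I am trying to recognise does \emph{not}, so the two must be kept carefully apart.

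The heart of the argument is to show that all these hyperplanes $U_g$ coincide, i.e. that the covered points of $\widehat V$ form a single totally singular subspace $W$ with $\dim W=\dim\br V$ and $P\notin W$, and that $W$ is contained in every generator of $\L$. The mechanism should be a higher-rank analogue of Lemma~\ref{Qminus_basic}(a): since $P$ is a hole and $\widehat V\subseteq P^\perp$, the total weight of covered points lying in $P^\perp$ is controlled by $\delta$, and for $\delta<\delta_0$ this is far too small to allow two distinct hyperplanes of $\widehat V$ to be covered. Once $W$ is pinned down I pass to the residue $W^\perp\cap\S_{n}=W\,\S_{n-\dim W-1}$ of $W$. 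When $\br V$ has the largest possible dimension (so that $W=\pi_{n-3}$) this residue is a rank-$2$ polar space $\S_2\in\{\q^-(5,q),\h(4,q^2)\}$; in the remaining $\q^-$ sub-case the residue has rank $3$ and one repeats the same reduction once more. In the residue the generators of $\L$ containing $W$ induce a family $\widetilde\L$ of lines with $|\widetilde\L|\le|\L|=t+1+\delta$, and the image of $P$ is a hole of $\widetilde\L$.

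Provided $\widetilde\L$ is again a generator blocking set of $\S_2$, Theorem~\ref{pr:q5qresult} (respectively Theorem~\ref{pr:h4q2result}) applies and shows that $\widetilde\L$ is a pencil through a point, or, in the $\q^-$ case, contains a cover of a $\q(4,q)$. Lifting this pencil or cover back through $W$ then produces at least $t+1>\frac{t}{s}+1+\delta$ generators of $\L$ lying in one common $(n+1+e)$-dimensional subspace---the perp of the totally singular subspace $\erz{W,u}$ obtained by lifting the centre $u$ of the pencil, or the span of the lifted $\q(4,q)$ in the cover case---so that Lemma~\ref{fivespace} finishes by identifying $\L$ with one of the examples of Table~\ref{tab:ex}. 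The two steps I expect to be the real obstacles are the concentration statement (that the covered points of $\widehat V$ collapse to the single subspace $W$, and that $W$ lies in every generator of $\L$) and the verification that the residual family $\widetilde\L$ still blocks every line of $\S_2$; for the latter, excluding a line of $\S_2$ whose preimage is a generator meeting $\L$ only along $W$ amounts to ruling out a generator of $\S_{n}$ covered only on $W$, and here Lemma~\ref{conic} should supply the quantitative control on the $\q(4,q)$-cover base needed in the $\q^-(2n+1,q)$ case.
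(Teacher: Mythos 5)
Your overall skeleton matches the paper's: both aim to exhibit an $(n+1+e)$-dimensional subspace containing more than $\frac{t}{s}+1+\delta$ generators of $\L$ and then invoke Lemma~\ref{fivespace}, and your final lifting step (the perp of the $(n-2)$-dimensional subspace $\erz{W,u}$ indeed has dimension $n+1+e$) is sound as far as it goes. But the step you yourself call the heart of the argument --- the concentration claim that the covered points of $\widehat V$ collapse to a single hyperplane $W$ contained in essentially all generators of $\L$ --- is exactly where the proposal breaks, because the mechanism you offer for it is false in rank $n\ge 3$. There is no higher-rank analogue of Lemma~\ref{Qminus_basic}(a): the quantity $\sum_{X}(w(X)-1)$, summed over covered points $X\in P^\perp$ with $w(X)$ the number of generators of $\L$ on $X$, is not bounded by $\delta$. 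The very configurations you are trying to recognise violate it maximally: for the pencil example $\L=\pi_{n-2}\S_1$, every point of $\pi_{n-2}\cap P^\perp$ (a subspace of dimension at least $n-3\ge 0$) lies on all $t+1$ generators, so the sum is at least $t$. Any counting principle strong enough to forbid multiply covered points of $P^\perp$ would forbid the examples themselves. What does survive in higher rank is only that the projection of $\L$ from $P$ is a generator blocking set of $\S_{n-1}$, hence has at least $t+1$ distinct elements, so at most $\delta$ generators of $\L$ have colliding projections. That is of no help here: two generators $g_1,g_2$ of $\L$ can project onto two \emph{different} elements of $\L^P$ (each of which contains $\br{V}$, as all elements of $\L^P$ must) while meeting $\widehat V$ in different hyperplanes $U_{g_1}\neq U_{g_2}$; no collision occurs, and when $\br{V}$ is a point the $U_g$ are single points of the line $\widehat V$, so no point of $\widehat V$ need be covered twice at all. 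The concentration statement is therefore left unproved, and it is genuinely the hard part of the lemma.

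For comparison, the paper obtains concentration not from the single hole $P$ but by introducing a second hole: it takes a line $l\subseteq\widehat V$ on $P$ (which meets at least $t+1$ generators of $\L$), chooses by a counting argument ($m<2$) a totally singular plane $\pi\supset l$ met by at most one generator of $\L$ in points outside $l$, finds a hole $Q\in\pi\setminus l$, and uses the vertex of $\L^Q$ --- after excluding, via Lemma~\ref{conic}, the possibility that the projection $l_Q$ lies in a $\q(4,q)$-cover base --- to produce a second line $l'\subset\pi$. Every generator meeting both $l$ and $l'$ but not meeting $\pi$ in a line must then pass through the single point $X=l\cap l'$, giving $t-\delta$ generators through $X$; two further holes $R,R'$ trap $t-3\delta>\frac{t}{s}+1+\delta$ of them in an $(n+1+e)$-space, and Lemma~\ref{fivespace} finishes. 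Your proposal also needs repair at two later points --- the assertion that $W$ lies in \emph{every} generator of $\L$ is stronger than anything provable (up to $\delta$ generators can avoid $W$), and the induced family $\widetilde\L$ in the residue of $W$ need not block all lines of that residue, since a generator of $\S_n$ through $W$ may be blocked only by elements of $\L$ not containing $W$ --- but these are secondary: without a correct replacement for the concentration step, built on at least one further hole rather than on a weight count at $P$, the proof does not get off the ground.
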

\begin{proof}
Let $P$ be the hole that projects $\L$ on an example with a vertex $\alpha$.
Hence, there exists a line $l$ on $P$ in $S_P$ meeting at least $t+1$ of the
generators of $\L$, and the vertex of $S_P$ equals $\erz{P,\alpha}$. We have
$l^\perp\cap \S_{n}=l \S_{n-2}$, hence the number of planes completely contained
in $\S_{n}$ on the line $l$ equals $|\P_{n-2}|$ ($\P_{n-2}$ is the point set of
$\S_{n-2}$). 

Suppose that a generator $g$ of $\L$ meets such a plane $\pi$ in a line,
then this line intersects $l$ in a point $P'\neq P$. But then $l^\perp \cap g$ has
dimension $n-2$, so the number of lines on $P'$ contained in $l^\perp \cap g$
equals $\theta_{n-3}$, and so $\theta_{n-3}$ planes of $\S_{n}$ on $l$ meet $g$
in a line.

Denote by $\lambda$ the number of planes on $l$ contained completely in the
vertex of $S_P$. Then $\lambda$ equals the number of points in a hyperplane of
$\alpha$; when $\alpha$ is a point, then $\lambda = 0$. Then there are $|\P_{n-2}|
-\lambda$ planes on $l$, completely contained in $\S_n$, but not contained in
the vertex of $S_P$.

Consequently, we find such a plane $\pi$ meeting the vertex of $S_P$ only in
$l$, and meeting at most $m := |\L|\cdot\theta_{n-3}/(|\P_{n-2}|-\lambda)$
generators $g_i$ in a line. A calculation shows that $m < 2$ if $n \geq 3$.
Hence, from the at least $t+1$ generators of $\L$ that meet $l$, at most one
meets $\pi$ in a line, and the at most $\delta$ generators of $\L$ that do not
meet $l$ can meet $\pi$ in at most one point. Hence, $\pi$ contains a hole $Q$
not on $l$.

At least $t+1$ generators of $\L$ meet $S_P$ in an $(n-2)$-dimensional subspace 
and meet the line $l$, and at least $t+1$ generators of $\L$ meet $S_Q$ in an
 $(n-2)$-dimensional subspace. Hence,  at least $2(t+1)-|\L| = t+1-\delta$
generators of $\L$ meet both $S_P$ and $S_Q$ in an $(n-2)$-dimensional 
subspace, and meet the line $l$.

Suppose that the projection of $\L$ from $Q$ contains a generators blocking set 
with a non-trivial vertex $\alpha'$. It is not possible that $l_Q$ is contained in 
$\alpha'$, since then all elements of $\L$ meeting $S_Q$ in an 
$(n-2)$-dimensional subspace would meet $\pi$ in a line, a contradiction to $m < 2$.

The base of $\L^Q$ is either a parabolic quadric $\q(4,q)$, an elliptic quadric 
$\q^-(3,q)$ or a hermitian curve $\h(2,q^2)$. In the latter two cases, since 
neither $\q^-(3,q)$ nor $\h(2,q^2)$ contain lines, the projection of the line $l$  
from $Q$, denoted by $l_Q$, is not contained in the base of $\L^Q$. Suppose 
now that the base of $\L^Q$ is a parabolic quadric $\q(4,q)$, and that this base 
contains the line $l_Q$. The $t+1-\delta$ generators of $\L$ meeting both $S_P$ 
and $S_Q$ in an $(n-2)$-dimensional subspace, all meet $l$. 
These $t+1-\delta$ generators are projected on generators of $\L^Q$, meeting the 
base of $\L^Q$ in a cover. Hence, in the quotient geometry of the vertex of $\L^Q$, 
$l_Q$ is now a line of $\q(4,q)$ meeting at least $t+1-\delta=q^2+1-\delta$ lines of a 
cover of $\q(4,q)$, a contradiction with Lemma~\ref{conic},  since 
$t+1-\delta > (\delta+1)(q+1)$ if $\delta_0 \leq q/2$.

We conclude that the line $l_Q$ is neither contained in the vertex of $\L^Q$ nor 
in the base of $\L^Q$. (This excludes also the possibility that $\L^Q$ has a trivial 
vertex, which is only possible for $n=3$ and $\S_n = \q^-(7,q)$). Hence, $l_Q$
is a line meeting $\alpha'$ and the base of $\L^Q$, and there exists a line $l'\neq l$
in $\pi$ connecting $Q$ with a point of $\alpha'$.

The $t+1-\delta$ generators of $\L$ meeting both $S_P$ and $S_Q$ in an
$(n-2)$-dimensional subspace also meet $l'$ in a point. At most one of these
generators meets $\pi$ in a line, so at least $t-\delta$ of these generators are
projected from the different points $P$ and $Q$ on  generators through a common
point, so before projection, these $t-\delta$ generators of $\L$ must meet in
the common point $X := l \cap l'$.

Now consider a hole $R$ not in the perp of $X$. Then $S_R$ meets at least
$(t-\delta+t+1)-(t+1+\delta) = t-2\delta$ of the generators on $X$ in an
$(n-2)$-subspace. These generators are therefore contained in $T:=\erz{S_R,X}$.
Finally, consider a hole $R'$ not in $T$ and not in the perp of $X$. Then at
least $t-3\delta > \frac{t}{s}+1+\delta$ of the generators that contain $X$
and are contained in  $T$ meet $S_{R'}$ in an $(n-2)$-subspace. These generators
lie therefore in $\erz{S_{R'}\cap T,X}$, which has dimension $n+1+e$. Now
Lemma~\ref{fivespace} completes the proof.
\end{proof}

\begin{Co}\label{co:easierones}
Theorem~\ref{th:rankn} (c) is true for $\h(2n,q^2)$, $n \ge 3$.
\end{Co}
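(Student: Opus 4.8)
The plan is to prove Corollary~\ref{co:easierones} by verifying that the hypotheses of Lemma~\ref{le:general1} are automatically met in the hermitian case, so that the corollary follows directly. The key observation is that for $\S_n = \h(2n,q^2)$, the induction hypothesis (case $n-1$ of Theorem~\ref{th:rankn}~(c)) tells us that whenever a hole $P$ projects $\L$ onto a generator blocking set of $\S_{n-1} = \h(2(n-1),q^2)$ containing one of the examples of Table~\ref{tab:ex}, that example is necessarily the cone $\pi_{n-3}\h(2,q^2)$ — which has a non-trivial vertex $\pi_{n-3}$. So first I would note that the hermitian row of Table~\ref{tab:ex} lists only one example, and it always has a non-trivial vertex (a subspace of dimension $n-3 \ge 0$).

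Second, I would dispose of the base case for the induction. For $n=3$, $\S_3 = \h(6,q^2)$ and $\S_{n-1} = \h(4,q^2)$; by Theorem~\ref{pr:h4q2result} (equivalently the case $n=2$ of Theorem~\ref{th:rankn}~(c)), the only example in $\h(4,q^2)$ under the gap hypothesis $\delta < q-3$ is the pencil of $q^3+1$ lines through a point. The pencil is precisely the cone $\pi_0 \h(2,q^2)$ with vertex a single point, so it does have a non-trivial vertex. Thus even the induction basis falls under the vertex hypothesis, and there is no degenerate sub-case (unlike $\q^-(2n+1,q)$, where the $\q(4,q)$-cover example can have a trivial vertex when $n=3$).

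Third, I would check the numerical side-condition that Lemma~\ref{le:general1} invokes, namely $\delta_0 \le q/2$, needed for the inequality $t+1-\delta > (\delta+1)(q+1)$ that rules out $l_Q$ lying in a $\q(4,q)$-base. In the hermitian case $t = q^3$ and the gap hypothesis is $\delta < q-3 < q/2$ (for $q > 3$), so $\delta_0 = q-3$ comfortably satisfies $\delta_0 \le q/2$; moreover the base of any sub-example is itself an $\h(2,q^2)$, which contains no lines, so the $\q(4,q)$-subcase of Lemma~\ref{le:general1} never even arises. Putting these together: for any hole $P$, the projected example $\L^P$ is a cone with non-trivial vertex, so the hypothesis of Lemma~\ref{le:general1} is satisfied, and that lemma immediately yields that $\L$ is the example of Table~\ref{tab:ex}. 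The only mild obstacle is bookkeeping the dimensions and confirming that the unique hermitian example always carries a vertex of dimension $\ge 0$, but this is immediate from the table and requires no real computation.

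\begin{proof}
Argue by induction on $n$, the case $n=2$ being Theorem~\ref{pr:h4q2result}. Let $n \ge 3$ and let $\L$ be a generator blocking set of $\h(2n,q^2)$ with $|\L| = q^3+1+\delta$ and $\delta < q-3$. If $\L$ is the example of Table~\ref{tab:ex}, we are done, so assume $\L$ is a minimal counterexample; in particular $\L$ admits at least one hole $P$. Projecting from $P$ yields, by the induction hypothesis applied to $\S_{n-1} = \h(2(n-1),q^2)$ with the same gap $\delta < q-3$, that $\L^P$ contains the unique example of Table~\ref{tab:ex} for $\h(2(n-1),q^2)$, namely a cone $\pi_{n-3}\h(2,q^2)$ (for $n=3$ this reads as the pencil $\pi_0\h(2,q^2)$). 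In every case this example has a non-trivial vertex of dimension $n-3 \ge 0$. Since $t = q^3$ and $\delta < q-3 < q/2$ for $q > 3$, the numerical hypothesis $\delta_0 \le q/2$ of Lemma~\ref{le:general1} holds; moreover the base of the example is an $\h(2,q^2)$, which contains no lines, so the $\q(4,q)$-subcase of that lemma does not occur. Thus all hypotheses of Lemma~\ref{le:general1} are met, and that lemma forces $\L$ to be the example of Table~\ref{tab:ex}, contradicting the choice of $\L$.
\end{proof}
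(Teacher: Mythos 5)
Your proof is correct and takes essentially the same route as the paper: the paper likewise observes that Theorem~\ref{pr:h4q2result} (for $n=3$) and the induction hypothesis (for $n\ge 4$) guarantee that every hole projects $\L$ onto a generator blocking set containing the cone $\pi_{n-3}\h(2,q^2)$, whose vertex is non-trivial, so that Lemma~\ref{le:general1} applies and yields the conclusion. One small slip that does not affect your argument: the inequality $q-3<q/2$ is false for $q\ge 7$, but this is immaterial because, as you yourself note, the condition $\delta_0\le q/2$ is only invoked inside the proof of Lemma~\ref{le:general1} in the subcase where the base of $\L^Q$ is a $\q(4,q)$, which never occurs for $\h(2n,q^2)$ (and in any case the relevant inequality $t+1-\delta>(\delta+1)(q+1)$ holds directly when $t=q^3$ and $\delta<q-3$).
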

\begin{proof}
Theorem~\ref{pr:h4q2result} guarantees that the assumption of
Lemma~\ref{le:general1} is true for $\S_{n}=\h(2n,q^2)$ and $n=3$.
Theorem~\ref{th:rankn} (c) then follows from the induction hypothesis.
\end{proof}

We may now assume that $\S_{n} = \q^-(2n+1,q)$, $n=3$, and that the projection of
$\L$ from every hole contains a generator blocking set with a trivial vertex,
i.e. a  cover of $Q(4,q)$. As $n=3$, then $\L$ is a set of planes.

\begin{Le}\label{sixspace}
If a hyperplane $T$ contains more than $q+1+3\delta$ elements of $\L$, then
$\L$ is one of the two examples in $\q^-(7,q)$ from Table~\ref{tab:ex}.
\end{Le}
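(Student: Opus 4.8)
The plan is to show that if a hyperplane $T$ of $\pg(7,q)$ contains more than $q+1+3\delta$ generators of $\L$, then the induced structure forces $\L$ into one of the two cone examples for $\q^-(7,q)$. Since $\L$ is now a set of planes in $\q^-(7,q)$ (the case $n=3$), a hyperplane $T$ meets $\q^-(7,q)$ in either a parabolic quadric $\q(6,q)$ (if $T$ is non-tangent) or a cone $P\q^-(5,q)$ with a point vertex (if $T$ is tangent at a point $P$). The strategy is to leverage Lemma~\ref{fivespace}: it suffices to locate an $(n+1+e)$-dimensional subspace, here a $5$-space, containing more than $\frac{t}{s}+1+\delta = q+1+\delta$ generators of $\L$. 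So the whole argument is about funnelling the many generators of $\L$ living in $T$ down into a single $5$-dimensional subspace.

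First I would treat the two cases for $T\cap\q^-(7,q)$ separately. If $T$ is tangent, so $T\cap\q^-(7,q) = P\,\q^-(5,q)$ is a cone with vertex the point $P$, then I would consider the quotient geometry of $P$: the generators of $\L$ contained in $T$ (those not through $P$ meet it in lines that project to generators, and those through $P$ project down) induce in the quotient $\q^-(5,q)$ a large set of generators. Applying Theorem~\ref{pr:q5qresult} (the rank-$2$ result for $\q^-(5,q)$) to this induced set, the upper bound $\delta \le \frac{1}{2}(3q-\sqrt{5q^2+2q+1})$ guarantees that this induced set either contains a pencil of $q^2+1$ generators through a point, or contains a cover of an embedded $\q(4,q)$. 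Either way, the induced structure has a low-dimensional core: a pencil lives on a single generator line, and a cover of $\q(4,q)$ lives in a $4$-dimensional subspace of the quotient. Pulling back through $P$, I obtain a $5$-space containing many generators of $\L$, and I would verify the count exceeds $q+1+\delta$, then invoke Lemma~\ref{fivespace}.

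For the non-tangent case, where $T\cap\q^-(7,q)=\q(6,q)$ is a non-singular parabolic quadric of rank $3$, the generators of $\L$ in $T$ form a partial spread (or, more precisely, a set of $\ge q+2+3\delta$ planes) of $\q(6,q)$. Here I would want to apply the rank-$3$ induction for $\q(6,q)$ — but $\q(6,q)$ is a $\q(2n,q)$ space, covered by Theorem~\ref{th:rankn}~(a), which has its own hypothesis $\delta < \min\{\frac{q-1}{2},\epsilon\}$. Since the present $\delta$ satisfies $\delta \le \frac{1}{2}(q-1)$, the induction for $\q(6,q)$ should apply provided the set of generators in $T$ actually \emph{blocks} all generators of $\q(6,q)$, which is the genuinely delicate point: a priori the planes of $\L$ lying in $T$ need only block generators of the larger space $\q^-(7,q)$, not the generators of the hyperplane section. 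I expect this to be the main obstacle, and I would resolve it by a counting argument showing that any generator of $\q(6,q)$ missed by all planes of $\L\cap T$ would give a hole of $\q^-(7,q)$ whose projection contains a cone-type example with non-trivial vertex, contradicting the standing assumption of this subsection (that every hole projects onto a cover of $\q(4,q)$ with trivial vertex). Once blocking is established, the induction hypothesis for $\q(6,q)$ yields one of its Table~\ref{tab:ex} examples inside $T$, each of which is confined to a $5$-space; a final count of the generators in that $5$-space, combined with Lemma~\ref{fivespace}, then forces $\L$ to be one of the two $\q^-(7,q)$ examples and completes the proof.
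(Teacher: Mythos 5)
There is a genuine gap, and it occurs at the load-bearing step of both of your cases: you treat the set $\L'$ of planes of $\L$ contained in $T$ as if it were a generator blocking set of the hyperplane section, but the hypothesis of the lemma gives only a \emph{lower} bound $|\L'|>q+1+3\delta$ (roughly $q$ planes), no blocking property, and no \emph{upper} bound on $|\L'|$. In the tangent case, every plane of $\L'$ passes through the tangent point $P$ (a plane inside the cone $P\q^-(5,q)$ avoiding $P$ would project to a plane of $\q^-(5,q)$, which has rank $2$), so $\L'$ projects to a set of possibly as few as $q+2+3\delta$ lines of $\q^-(5,q)$; Theorem~\ref{pr:q5qresult} requires a generator blocking set of size $q^2+1+\delta'$, so it simply cannot be invoked. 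In the non-tangent case, your proposed fix for the blocking property of $\L'$ in $\q(6,q)$ fails: a generator of $\q(6,q)$ met by no element of $\L'$ can still be completely covered by the traces of planes of $\L$ not contained in $T$ (every plane of $\pg(7,q)$ meets the hyperplane $T$ in at least a line), so such a generator need not carry any hole and no contradiction with the standing assumption arises; moreover $|\L'|$ may be as large as $q^2+1+\delta$, far outside the range $\delta'<\min\{\frac{q-1}{2},\epsilon\}$ where Theorem~\ref{th:rankn}~(a) applies.

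Even where your argument does produce structure, it produces the wrong kind of incidence for Lemma~\ref{fivespace}, which needs more than $q+1+\delta$ generators of $\L$ \emph{contained in} a $5$-space. Pulling a pencil or a cover back through a point gives a $5$-space that the relevant generators of $\L$ merely meet in lines (their traces lie there, not the generators themselves), and a Table~\ref{tab:ex} example of $\q(6,q)$ consists of only $q+1$ generators, which never exceeds $q+1+\delta$. The paper's proof is designed exactly to bridge this: it takes two suitable holes $P,Q$ outside $T$ (the standing assumption makes $S_P$ and $S_Q$ cones over covers of $\q(4,q)$, hence $5$-spaces), observes that at least $|\L'|-2\delta$ planes of $\L'$ meet both $S_P\cap T$ and $S_Q\cap T$ in lines, and notes that since $S_P\cap S_Q\cap T\subseteq W:=Q^\perp\cap T\cap S_P$ contains no singular lines, each such plane contains two distinct concurrent lines and is therefore \emph{contained} in $H:=\erz{S_P\cap T,S_Q\cap T}$; it then proves $\dim H=5$ by showing $W\subseteq S_Q$ via Lemma~\ref{conic} in the quotient of $P$, and finishes with Lemma~\ref{fivespace}. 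This ``two traces force containment in the span'' mechanism is the idea missing from your proposal, and without it (or a substitute) neither of your cases can be completed.
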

\begin{proof}
Denote by $\L'$ the set of the generators of $\L$ that are contained in $T$.
If $P$ is a hole outside of $T$, then $S_P$ meets all except at most $\delta$
planes of $\L$ in a line, and hence more than $q+1+2\delta$ of these planes are
contained in $T$. Recall that $S_P$ is a cone with vertex $P$ over $S_P\cap T$, and
$S_P\cap T$ has dimension 4. 

Note that $P^\perp \cap \q^-(7,q) = P\Q_5$ with $\Q_5$ an elliptic quadric
$\q^-(5,q)$, and we may suppose that $\Q_5 \subseteq T$. Denote by $\Q_4$ the
parabolic quadric $\q(4,q)$ contained in $\Q_5$ such that $S_P = P\Q_4$, then $T
\cap S_P \cap \q^-(7,q) = \Q_4$. Consider any point $Q \in (\q^-(7,q) \cap
P^\perp) \setminus (S_P \cup \Q_5)$. Clearly $W := Q^\perp \cap T \cap S_P$
meets $\q^-(7,q)$ in an elliptic quadric $\q^-(3,q)$. There are $(q^4-q^2)(q-1)$
points like $Q$, and at most $(q^2-q)(q+1)$ of them are covered by
elements of $\L$, since we assumed that more than $q+1+3\delta$ elements of $\L$
are contained in $T$. So at least $q^5-q^4-2q^3+q^2+q > 0$ points of $(\q^-(7,q)
\cap P^\perp) \setminus (S_P \cup \Q_5)$ are holes and have the property that
$W:=Q^\perp\cap T\cap S_P$ meets $\q^-(7,q)$ in an elliptic quadric $\q^-(3,q)$.
As before, $S_Q\cap T$ has dimension four and meets at least $|\L'|-\delta$
planes of $\L'$ in a line. Then at least $|\L'|-2\delta$ planes of $\L'$ meet
$S_P\cap T$ and $S_Q\cap T$ in a line. As $S_P\cap S_Q\cap T\subseteq W$ does
not contain singular lines, it follows that these $|\L'|-2\delta$ planes of
$\L'$ are contained in the subspace $H:=\erz{S_P\cap T,S_Q\cap T}$.

We have $W\cap \q^-(7,q)=\q^-(3,q)$, so in the quotient geometry of $P$, 
the $|\L'|-2\delta$ planes induce $|\L'|-2\delta$ lines all meeting this $\q^-3,q)$. 
Now $\L$ is projected from $P$ on a cover of a parabolic quadric $\q(4,q)$ with at
most $q^2+1+\delta$ lines. Then $|\L'|-2\delta$ lines of the cover must meet
more than $q+1$ points of this elliptic quadric $\q^-(3,q)$. It follows that
$S_Q\cap T$ contains more than $q+1$ points of the elliptic quadric $\q^-(3,q)$
in $W$ and hence $W\subseteq S_Q$. Then $S_P\cap T$ and $S_Q\cap T$ meet in $W$,
so the subspace $H$ they generate has dimension five. As
$|\L'|-2\delta>q+1+\delta$ planes of $\L$ lie in $H$, Lemma~\ref{fivespace}
completes the proof.
\end{proof}

\begin{Le}\label{le:q7q}
Suppose that $\L$ is a minimal generator blocking set of size $t+1+\delta$ of
$\q^-(7,q)$, $\delta < \delta_0$. If there exists a hole $P$ that projects
$\L$ on a generator blocking set containing a cover of $\q(4,q)$, then $\L$ is
one of the examples in Table~\ref{tab:ex}.
\end{Le}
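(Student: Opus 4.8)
The plan is to exhibit a subspace of $\pg(7,q)$ that contains more generators of $\L$ than Lemma~\ref{fivespace} or Lemma~\ref{sixspace} allow, and then to conclude by one of these. Here $\S_n=\q^-(7,q)$, so $n=3$, $e=1$, $(s,t)=(q,q^2)$, generators are planes, $S_P$ is a $5$-space with $S_P\cap\q^-(7,q)=P\Q_4$ for a parabolic quadric $\Q_4=\q(4,q)$, and $\erz{\Q_4}$ is a hyperplane of $S_P$ not through $P$. Since the projection of $\L$ from $P$ contains a cover of $\Q_4$ and a cover of $\q(4,q)$ has at least $q^2+1$ lines, at least $q^2+1$ generators of $\L$ meet $S_P$ in a line; call this set $A$, so the at most $\delta$ remaining generators lie outside $A$. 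Each $g\in A$ spans with $S_P$ the hyperplane $\erz{S_P,g}$, one of the $q+1$ hyperplanes of the pencil through $S_P$, and the members of $A$ distribute over these $q+1$ hyperplanes. Pure averaging only yields a hyperplane meeting $A$ in about $q$ generators, which falls just short of the bound $q+1+3\delta$ demanded by Lemma~\ref{sixspace}; the extra concentration must come from a second hole.

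To produce it I would pick a second hole $Q$ whose space $S_Q$ shares a large subspace with $S_P$, the natural candidate being a hole arising (as in the proof of Lemma~\ref{le:q4q}) from the points of the cone $P\Q_4$ that are not covered by $\L$. By Theorem~\ref{pr:q5qresult}, $Q$ projects $\L$ either onto a pencil or onto a cover of a $\q(4,q)$. In the first case the projection has a non-trivial vertex and Lemma~\ref{le:general1} already identifies $\L$ with an example of Table~\ref{tab:ex}, so I may assume $Q$ projects onto a cover, giving a second set $B$ of at least $q^2+1$ generators meeting $S_Q$ in a line and hence $|A\cap B|\ge q^2+1-\delta$. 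For $g\in A\cap B$ the two lines $g\cap S_P$ and $g\cap S_Q$ lie in the plane $g$ and therefore meet. The aim is to choose $Q$ so that $S_P\cap S_Q$ has dimension $4$; then $\erz{S_P,S_Q}$ is a hyperplane, and one shows, exactly as in the final step of Lemma~\ref{sixspace} where $S_P\cap S_Q$ is seen to carry no singular line transverse to both covers, that all but $O(\delta)$ of the generators in $A\cap B$ are forced into $\erz{S_P,S_Q}$. If that hyperplane then contains more than $q+1+3\delta$ generators of $\L$, Lemma~\ref{sixspace} finishes; if the intersection is so large that the common generators already lie in a $5$-space carrying more than $q+1+\delta$ of them, Lemma~\ref{fivespace} finishes and pins $\L$ down as one of the examples.

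The main obstacle is precisely this dimension bookkeeping. The single-hole averaging lands a constant below the threshold of Lemma~\ref{sixspace}, so the whole argument hinges on selecting the second hole $Q$ so that $S_Q$ meets $S_P$ in a subspace of exactly the right dimension, and then on proving that the $q^2+1-O(\delta)$ generators common to $A$ and $B$ do not disperse over several hyperplanes of the pencil through $S_P$ but collapse into the single hyperplane $\erz{S_P,S_Q}$. Establishing $\dim\erz{S_P,S_Q}=6$ and $\dim(S_P\cap S_Q)=4$, and verifying that $S_P\cap S_Q$ contains no singular line lying in two of the common generators, is the delicate point on which the proof turns; once it is in place, Lemma~\ref{sixspace} (or, in the degenerate sub-case, Lemma~\ref{fivespace}) yields the conclusion.
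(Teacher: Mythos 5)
Your overall skeleton coincides with the paper's: find a second hole $Q$, show $\dim(S_P\cap S_Q)=4$ so that $T=\erz{S_P,S_Q}$ is a hyperplane, push the $q^2+1-O(\delta)$ common generators into $T$, and finish with Lemma~\ref{sixspace}. But what you defer as ``the delicate point on which the proof turns'' is exactly the content of the paper's proof, and you give no mechanism for it; as it stands this is a genuine gap, not a proof. Two things are missing. First, the choice of $Q$: it is not enough to take any hole in the cone $P\Q_4$. The paper first picks a point $X\in\Q_4$ lying on \emph{exactly one} line of the cover $\L^P$ (such points exist because the cover has only $q^2+1+\delta$ lines), sets $h=PX$, and finds a hole $Q$ on $h$ with $Q\neq P$; the special position of $X$ is used twice later, both to control $S_P\cap S_Q$ and to guarantee that at least $q^2$ lines of $\C'$ are \emph{not} contained in $S_P\cap S_Q$, so that the common generators really are spanned by a line of $S_P$ outside $S_Q$ and a line of $S_Q$, hence lie in $T$. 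Without this, your worry that the common generators ``disperse over several hyperplanes'' cannot be resolved.

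Second, and more seriously, the dimension claim itself. The paper's argument is: each of the $\ge q^2+1-\delta$ planes of $\L$ meeting both $S_P$ and $S_Q$ in a line meets the cone $h\q(2,q)=h^\perp\cap S_P\cap\q^-(7,q)$ in a point of $S_P\cap S_Q$; since $S_P\cap S_Q\subseteq h^\perp\cap S_P$ (a $4$-space), either $S_P\cap S_Q$ equals this $4$-space, or it is a $3$-space which, carrying so many points of the cone, must contain a subcone $Y\q(2,q)$. In the latter case the vertex $Y$ must be $Q$, and then projecting from $Q$ one sees a conic meeting at least $q^2+1-\delta$ lines of the cover $\L^Q$, contradicting Lemma~\ref{conic} (which bounds this by $(\delta+1)(q+1)$) since $\delta<q-3$. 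This use of Lemma~\ref{conic} to kill the $3$-dimensional case is the crux, and nothing in your proposal substitutes for it; your appeal to ``the final step of Lemma~\ref{sixspace}'' does not apply here, because that step concerns a $W$ already known to meet the quadric in an elliptic $\q^-(3,q)$, a fact you have no analogue of. Once these two steps are supplied, the rest of your outline (the counting $|A\cap B|\ge q^2+1-\delta$, the spanning argument, and $q^2-\delta>q+1+3\delta$ feeding Lemma~\ref{sixspace}) is correct and agrees with the paper.
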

\begin{proof}
Consider a hole $P$. Then $S_P \cap \q^-(7,q) = P\q(4,q)$. Denote the base of
this cone by $\Q_4$. The assumption of the lemma is that $\L^P$ is a minimal
cover $\C$ of $\Q_4$. Consider a point $X \in \Q_4$ contained in exactly one
line of $\C$. Then $X^\perp \cap \Q_4 = X\q(2,q)$, and each line on $X$ is
covered completely, so $X^\perp \cap \Q_4$ meets at least $q^2+1$ lines of $\C$.

The lines of $\C$ are projections from $P$ of the intersections of elements
of $\L$ with the subspace $S_P$, call $\C'$ this set of intersections that is
projected on $\C$. Thus the line $h=PX$ of $S_P$ on $P$ meets exactly
one line of $\C$ and $h^\perp \cap S_P \cap \q^-(7,q) = h\q(2,q)$ meets at least
$q^2+1$ lines of $\C'$. At most $\delta$ elements of $\L$ are possibly not
intersecting $S_P$ in an element of $\C'$, so we find a hole $Q$ on $h$ with
$Q\neq P$. There are at least $q^2+1$ elements in $\C'$, so at least
$q^2+1-\delta$ elements come from planes $\pi\in\L$ with $\pi\cap Q^\perp\subset
S_Q$. For each such element, its intersection with $h\q(2,q)$ lies in $S_Q$.
Thus either $S_P\cap S_Q=h^\perp\cap S_P$ or $S_P\cap S_Q$ is a $3$-dimensional
subspace of $h^\perp\cap S_P$ that contains a cone $Y\q(2,q)$.

In the second case, the
vertex $Y$ must be the point $Q$ (as $Q\in S_Q$); but then projecting from $Q$
we see a cover of $\q(4,q)$ containing a conic meeting at least $q^2+1-\delta$
of the lines of the cover. In this situation, Lemma~\ref{conic} gives
$q^2+1-\delta\le (\delta+1)(q+1)$, that is $\delta>q-3$, a contradiction.

Hence, $S_P\cap S_Q$ has dimension four, so $T=\erz{S_P,S_Q}$ is a hyperplane. At
least $q^2$ planes of $\L$ meet $S_P$ in a line that is not contained in
$S_P\cap S_Q$. At least $q^2-\delta$ of these also meet $S_Q$ in a line and
hence are contained in $T$. It follows from $\delta < q/2$ that $q^2-\delta >
q+1+3\delta$, and then Lemma~\ref{sixspace} completes the proof. 
\end{proof}

\begin{Co}\label{co:harderone}
Theorem~\ref{th:rankn} (b) is true for $\q^-(2n+1,q)$, $n \ge 3$.
\end{Co}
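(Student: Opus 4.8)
The plan is to complete the induction on $n$ that proves Theorem~\ref{th:rankn}~(b), with base case $n=2$ given by Theorem~\ref{pr:q5qresult} and induction hypothesis the validity of the statement in rank $n-1$. First I would reduce to the case that $\L$ is minimal: passing to a minimal generator blocking set contained in $\L$ only shrinks $\delta$, and it suffices to exhibit one of the examples inside this smaller set, so we may assume $|\L| = q^2 + 1 + \delta$ with $\L$ minimal and $\delta < \delta_0$. A counting argument then shows that $\L$ cannot be a cover: the $q^2+1+\delta$ generators of $\L$ cover at most $(q^2+1+\delta)\theta_{n-1}$ points, which is far fewer than $|\P|$ once $n \ge 3$. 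Hence $\L$ has a hole.

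Next, fix a hole $P$ and project $\L$ from $P$ into the quotient polar space $\S_{n-1} = \q^-(2n-1,q)$; this yields a generator blocking set $\L'$ with $|\L'| \le |\L|$. By the induction hypothesis, $\L'$ contains one of the two examples of $\S_{n-1}$ from Table~\ref{tab:ex}, namely the pencil-type cone $\pi_{n-3}\q^-(3,q)$ or the cover-type cone $\pi_{n-4}\q(4,q)$. The decisive point is the dimension of the vertex of this induced example. The vertex $\pi_{n-3}$ of the first example is non-trivial for every $n \ge 3$, while the vertex $\pi_{n-4}$ of the second is non-trivial exactly when $n \ge 4$. In all of these cases the hole $P$ projects $\L$ onto an example with a non-trivial vertex, so Lemma~\ref{le:general1} applies and yields that $\L$ is one of the examples in Table~\ref{tab:ex}.

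The sole remaining possibility is that $n = 3$ and that every hole projects $\L$ onto a blocking set of $\q^-(5,q)$ containing a cover of $\q(4,q)$, whose vertex $\pi_{n-4} = \pi_{-1}$ is empty. This is precisely the hypothesis of Lemma~\ref{le:q7q}: taking any hole $P$ whose projection contains such a cover, that lemma concludes that $\L$ is one of the two examples in $\q^-(7,q)$. Combining the cases completes the induction step, and hence the statement holds for all $n \ge 3$.

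The substantive geometric work lies entirely in the two preparatory lemmas, not in this assembly. At the level of this corollary the only thing requiring care is the vertex-dimension bookkeeping, namely the observation that a trivial-vertex cover of $\q(4,q)$ can arise in the quotient $\S_{n-1}$ only at the boundary rank $n=3$, so that Lemma~\ref{le:general1} and Lemma~\ref{le:q7q} between them exhaust every case. I expect no obstacle beyond verifying this dichotomy and the routine reduction to a minimal blocking set possessing a hole.
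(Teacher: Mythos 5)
Your proof is correct and follows essentially the same route as the paper: both argue that by Theorem~\ref{pr:q5qresult} (for $n=3$) or the induction hypothesis (for $n\ge 4$) every hole projects $\L$ onto a set containing one of the Table~\ref{tab:ex} examples of $\q^-(2n-1,q)$, and then split according to whether that example has a non-trivial vertex (Lemma~\ref{le:general1}) or is a trivial-vertex cover of $\q(4,q)$, possible only at $n=3$ (Lemma~\ref{le:q7q}). Your explicit bookkeeping --- reduction to a minimal blocking set and the counting argument producing a hole --- only makes precise steps the paper leaves implicit.
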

\begin{proof}
Theorem~\ref{pr:q5qresult} guarantees that for $\S_{n}=\q^-(7,q)$ and $n=3$,
the assumption of either Lemma~\ref{le:general1} or Lemma~\ref{le:q7q} is true.
Hence, Theorem~\ref{th:rankn} (b) follows for $n=3$. But then the assumption
of Lemma~\ref{le:general1} is true for $\S_{n}=\q^-(2n+1,q)$ and $n=4$,
and then Theorem~\ref{th:rankn} (b) follows from the induction
hypothesis.
\end{proof}

\subsection{The polar space $\q(2n,q)$}

This subsection is devoted to the proof of Theorem~\ref{th:rankn}~(a).
Lemma~\ref{le:general1} can also be translated to this case, but only for a bad
upper bound on $\delta$. Therefore we treat the polar space $\q(2n,q)$ separately.
Recall that for $\q(2n,q)$, $\delta_0 = \mathrm{min}\{\frac{q-1}{2}, 
\epsilon\}$, with $\epsilon$ such that $q+1+\epsilon$ is the size of the
smallest non-trivial blocking set of $\pg(2,q)$.

We suppose that $\L$ is a generator blocking set of $\q(2n,q)$, $n \geq 3$, of
size $q+1+\delta$, $\delta < \delta_0$. Recall that $\L^R$ is the minimal
generator blocking set of $\q(2n-2,q)$ contained in the projection of $\L$ from
a hole $R$. So when $n=3$, it is possible that $\L^R$ is a generator blocking
set of $\q(4,q)$ with a trivial vertex.

For the Lemmas~\ref{le:blocking}, \ref{le:proj:regulus},
and~\ref{le:q6qregulus}, the assumption is that $n=3$, and that for any hole
$R$, $\L^R$ has a trivial vertex, i.e. $\L^R$ is a regulus.

So let $R$ be a hole such that $\L^R$ is a regulus. Let $g_{i}$, $i=1,\ldots,$
$q+1+\delta$, be the elements of $\L$ and denote by $l_i$ the intersection of
$R^\bot\cap g_i$. At least $q+1$ of the lines $l_i$ are projected on the lines
of the regulus $\L^R$. We denote the $q+1$ lines of the regulus $\L^R$ by
$\tilde{l}_i$, $i=1,\ldots,q+1$. The opposite lines of the regulus $\L^R$ are
denoted by $\tilde{m}_i$, $i=1,\ldots,q+1$.

\begin{Le}\label{le:blocking}
Suppose that $\tilde{m}_j$ is a line of the opposite regulus and that $B_j$ is the
set of points that are the intersection of the lines $l_i$ with
$\erz{R,\tilde{m}_j}$. Then $B_j$ contains a line.
\end{Le}
\begin{proof}
Since at least $q+1$ lines $l_i$ must meet $\erz{R,\tilde{m}_j}$ in a point,
$|B_j| \geq q+1$. We show that $B_j$ is a blocking set in $\erz{R,\tilde{m}_j}$.
Assume that a line $k$ in $\erz{R,\tilde{m}_j}$ is disjoint to $B_j$ and take a point
$R'$ on $k$, then $R'$ is a hole. By the assumption made before this lemma,
$\L^{R'}$ is also a generator blocking set with a trivial vertex, i.e. a regulus
$\R'$. Consider now the plane $\pi := \langle R, k \rangle$. The plane $\pi$ is
contained in $S_R$. If the plane $\pi$ is also contained in $S_{R'}$, then it is
projected from $R'$ on a line of $\R'$ or of the opposite regulus of $\R'$; in both
cases it is projected on a covered point of $\R'$, and hence the line $k$ must
contain an element of $B_j$, a contradiction. So the plane $\pi$ is not contained
in $S_{R'}$.

There are at least $q+1$ elements of $\L$ that meet $S_{R'}$ in a line; such a
line is projected from $R'$ on a line of $\R'$. No two lines that are projected
on two different lines of $\R'$ can meet $\pi$ in the same point. Hence, of the
at least $q+1$ elements of $\L$ that are projected from $R'$ on $\R'$, at most
one can meet $\pi$ in a point, since otherwise $\pi$ is projected from $R'$ on a
line of the opposite regulus of $\R'$, but then the plane $\pi$ would be
contained in $S_{R'}$. But then at most $\delta+1$ elements of $\L$ can meet
$\pi$ in a point, a contradiction with $|B_j| \geq q+1$.
\end{proof}
We denote the line contained in the set $B_j$ by $m_j$, and so $m_j$ is
projected from $R$ on $\tilde{m}_j$. Now we consider again the hole $R$ and the
regulus $\L^R$.

\begin{Le}\label{le:proj:regulus}
The generator blocking set $\L^R$ arises as the projection from $R$ of a
regulus, of which the lines are contained in the elements of $\L$.
\end{Le}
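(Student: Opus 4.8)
The plan is to realise the regulus inside a three-dimensional subspace of the four-space $S_R$. Recall that $S_R\cap\q(6,q)$ is the cone $K=R\q^+(3,q)$, that the $q+1$ lines $l_i$ which project from $R$ onto the regulus $\L^R$ lie in $K$ (each $l_i=R^\perp\cap g_i$ lying in the plane $\erz{R,\tilde l_i}\subset K$), and that for each line $\tilde m_j$ of the opposite regulus Lemma~\ref{le:blocking} produces a line $m_j\subset\erz{R,\tilde m_j}$, all of whose points lie on the lines $l_i$ and which projects onto $\tilde m_j$. Since the $\tilde l_i$ are pairwise skew and $R\notin l_i$, the lines $l_i$ are pairwise skew, and likewise the $m_j$. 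First I would form the solid $U:=\erz{m_1,m_2}$, which I expect to be the plane of the desired regulus.

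Because $\tilde m_1$ and $\tilde m_2$ span the whole quotient three-space, $U$ cannot contain $R$, so projection from $R$ maps $U$ isomorphically onto that quotient three-space; hence $U\cap\q(6,q)=U\cap K$ is a genuine hyperbolic quadric $\Q\cong\q^+(3,q)$ contained in $\q(6,q)$, whose two reguli $\A$ and $\B$ project onto $\{\tilde l_i\}$ and $\{\tilde m_j\}$, with $m_1,m_2\in\B$. The key mechanism is that a line meeting two skew lines of $U$ lies in $U$: if $l_i$ meets both $m_1$ and $m_2$, then $l_i\subset U$, and since it projects onto $\tilde l_i$ it is the member $a_i$ of $\A$ with that projection. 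As each of the $|\L|=q+1+\delta$ lines $l_i$ meets the plane $\erz{R,\tilde m_j}$ in at most one point, $|B_j|\le q+1+\delta$, so Lemma~\ref{le:blocking} forces $m_j$ to meet at least $q+1-\delta$ of the $l_i$; consequently at least $q+1-2\delta$ of the $l_i$ equal their $a_i\subset U$, and, $\delta$ being small, each $m_j$ meets at least two of these, whence every $m_j\subset U$. Thus $\{m_j\}=\B$ is the full opposite regulus of $\Q$.

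It remains to show that every line $a_i\in\A$ is contained in an element of $\L$; this is the step I expect to be the main obstacle, since a priori the at most $\delta$ exceptional lines $l_i\neq a_i$ need not lie in $U$. Here I would argue by the following dichotomy: a line of the plane $\erz{R,\tilde l_i}$ projecting onto $\tilde l_i$ either equals $a_i$, and then meets all $q+1$ of the $m_j$, or meets $U$ in a single point and hence at most one $m_j$. If $l_i\neq a_i$, then $m_j$ misses $l_i$ for at least $q$ indices $j$, and for each such $j$ the point of $m_j$ over $\tilde l_i\cap\tilde m_j$ lies, by Lemma~\ref{le:blocking}, on some line $R^\perp\cap g_{i'}$; as any projection $\tilde l_{i'}\neq\tilde l_i$ meets $\tilde l_i$ in at most one point and there are fewer than $q$ such extra lines, one of these lines $l'$ must itself project onto $\tilde l_i$. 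This $l'$ then meets at least $q-\delta\ge 2$ of the $m_j$, so by the dichotomy $l'=a_i$, whence $a_i=R^\perp\cap g_{i'}\subset g_{i'}\in\L$. Therefore $\A$ is a regulus of $\q(6,q)$ whose $q+1$ lines each lie in an element of $\L$ and which projects from $R$ onto $\L^R$, which is exactly the assertion.
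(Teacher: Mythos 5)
Your outline is the same as the paper's own proof (span a solid $U$ by two of the lines supplied by Lemma~\ref{le:blocking}, recognise $U\cap\q(6,q)$ as a hyperbolic quadric one of whose reguli projects onto $\L^R$, then show the lines of that regulus are of the form $R^\perp\cap g_i$), but two of your steps have genuine gaps. The first: you never address elements of $\L$ that meet $S_R$ in a line lying \emph{inside} a plane $\erz{R,\tilde m_j}$, i.e.\ projecting from $R$ onto an opposite-regulus line; up to $\delta$ elements of $\L$ can do this. Your claim that every $l_i$ meets $\erz{R,\tilde m_j}$ in at most one point is then false, and in the worst case the line $m_j$ produced by Lemma~\ref{le:blocking} is itself such an intersection $R^\perp\cap g_{i'}$: all its points are then accounted for by $g_{i'}$ alone, nothing forces $m_j$ to meet two (let alone $q+1-\delta$) of the other $l_i$, and your conclusion that every $m_j$ lies in $U$ --- which your dichotomy step needs in the strong form $\{m_j\}=\B$, since it asserts that $a_i$ meets all $q+1$ of the $m_j$ and that a line not in $U$ meets at most one $m_j$ --- collapses. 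This is precisely why the paper's proof begins by showing (via the observation that no $g_i$ can meet both a plane $\erz{R,\tilde l_i}$ and a plane $\erz{R,\tilde m_j}$ in lines) that at most $\delta$ of the planes $\erz{R,\tilde m_j}$ contain a line $l_i$, and then runs the entire argument only on the at least $q+1-\delta$ remaining planes.

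The second gap is quantitative: the lemma must hold for all $\delta<\delta_0$, and $\delta_0=\frac{q-1}{2}$ when $q$ is prime, but your counts do not reach that range. To force $m_j\subset U$ you intersect the at least $q+1-\delta$ lines $l_i$ meeting $m_j$ with the at least $q+1-2\delta$ lines equal to their $a_i$; inside a universe of $q+1+\delta$ lines this guarantees only $q+1-4\delta$ common members, so ``$\delta$ being small'' here means $\delta\le\frac{q-1}{4}$. The paper avoids this loss by counting common transversals of the triple $m_1,m_2,m_j$ directly: $3(q+1)-2(q+1+\delta)=q+1-2\delta\ge 2$ holds throughout the range. Moreover, your final assertion that $l'$ meets at least $q-\delta\ge2$ of the $m_j$ is a non sequitur: the at least $q-\delta$ points $P_j$ not covered by extra lines need not lie on a single line, because up to $1+\delta$ elements of $\L$ may meet $S_R$ in lines projecting onto the same $\tilde l_i$, and pigeonhole then yields only about $(q-\delta)/(1+\delta)$ points on one of them, which can equal $1$. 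That slip is repairable with your own dichotomy (if no line over $\tilde l_i$ equals $a_i$, each such line meets $U$, hence the union of the $m_j$, in at most one point, so at most $1+2\delta<q$ of the $P_j$ are covered, a contradiction), but once combined with the necessary restriction to the good planes from the first gap, this route still demands a bound of order $\frac{q-1}{3}$; the paper's concluding count --- common transversals to all the $m_j$ simultaneously, using that a line meeting two of the pairwise skew $m_j$ lies in $U$ and is then a transversal to all of them --- is what attains $\delta\le\frac{q-1}{2}$.
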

\begin{proof}
An element $g_i \in \L$ that is projected from $R$ on the line $\tilde{m}_j$
must meet the plane $\langle R,\tilde{m}_j \rangle$ in a line. But an element
$g_i \in \L$ cannot meet a plane $\langle R,\tilde{l}_i\rangle$ and a 
plane $\langle R, \tilde{m}_j \rangle$ in a line, since then $g_i$ would be a
generator of $\q(6,q)$ contained in $R^\perp$ not containing $R$, a
contradiction. So at most $\delta$ elements of $\L$ meet $S_R$ in a line that is
projected on a line $\tilde{m}_j$. Hence, at least $q+1-\delta$ planes
$\erz{R,\tilde{m}_j}$ do not contain a line $l_i$, so, by
Lemma~\ref{le:blocking},
there are at least $q+1-\delta$ lines $m_j \subseteq B_j$ not coming from the
intersection of an element of $\L$ and $S_R$, that are projected on a line of
the opposite regulus of $\L^R$. Number these $n \ge q+1-\delta$ lines from $1$
to $n$.

Suppose that $l_1,l_2,\ldots,l_{q+1}$ are transversal to $m_1$. Since $\delta\le
\frac{q-1}{2}$, a second transversal $m_2$ has at least $\frac{q+3}{2}$ common
transversals with $m_1$. So we find lines $l_1,\ldots,l_{\frac{q+3}{2}}$ lying
in the same 3-space $\erz{m_1,m_2}$. A third line $m_j$, $j \neq 1,2$, has at
least 2 common transversals with $m_1$ and $m_2$, so all transversals $m_j$ lie
in $\erz{m_1,m_2}$. Suppose that we find at most $q$ lines $l_1,\ldots,l_q$ which are
transversal to $m_1,\ldots,m_{q+1-\delta}$. Then $q+1-\delta$ remaining
points on the lines $m_j$ must be covered by the $\delta+1$ remaining lines
$l_i$, so $\delta +1 \geq q+1-\delta$, a contradiction with the assumption on
$\delta$. So we find a regulus of lines $l_1,\ldots,l_{q+1}$ that is projected
on $\L^R$ from $R$.
\end{proof}

\begin{Le}\label{le:q6qregulus}
The set $\L$ contains $q+1$ generators through a point $P$, which are projected
from $P$ on a regulus.
\end{Le}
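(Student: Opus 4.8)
The plan is to reduce the lemma to a single concurrence statement and then force that concurrence using the standing hypothesis that every hole projects to a regulus. By Lemma~\ref{le:proj:regulus} there is a regulus $l_1,\dots,l_{q+1}$ of lines, with $l_i\subseteq g_i\in\L$, spanning a hyperbolic quadric $\q^+(3,q)=\Sigma\cap\q(6,q)$ inside a solid $\Sigma$. Since this intersection is non-degenerate, $\Sigma^\perp$ is a plane meeting $\q(6,q)$ in a conic $C$, and $l_i^\perp\cap\q(6,q)=l_iC$; hence every generator through $l_i$ has the form $\erz{l_i,c}$ with $c\in C$, so $g_i=\erz{l_i,c_i}$ for a unique $c_i\in C$. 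Using the decomposition $\pg(6,q)=\Sigma\oplus\Sigma^\perp$, a direct computation shows that for $i\neq j$ the generators $g_i,g_j$ meet if and only if $c_i=c_j$, and then only in that common point. Thus the lemma is equivalent to the assertion that all $c_i$ coincide: if $c_1=\dots=c_{q+1}=:P$, then $g_1,\dots,g_{q+1}$ are $q+1$ generators of $\L$ through $P$ whose projections from $P$ are the pairwise skew, hence regulus, images of the $l_i$.

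I first show the given hole $R$ lies on $C$. Because $l_i\subseteq R^\perp$, the point $R$ is collinear with every point of $l_i$, so $\erz{R,l_i}$ is a totally singular plane, i.e. a generator through $l_i$, and therefore meets $C$. Writing $R=\sigma+\tau$ with $\sigma\in\Sigma$ and $\tau\in\Sigma^\perp$, a short computation forces $\sigma\in l_i$ or $\sigma=0$; as this must hold for all $i$ and the $l_i$ are pairwise skew, we get $\sigma=0$, i.e. $R\in\Sigma^\perp\cap\q(6,q)=C$.

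For the main step, suppose the $c_i$ are not all equal and pick $i_0$ with $c_{i_0}\neq c_1$. The generator $\erz{c_1,l_{i_0}}$ is totally singular, and its covered points are confined to $l_{i_0}$, to $c_1$, and to a bounded number of further lines and points coming from the at most $q+\delta$ remaining generators; since $|\L|=q+1+\delta$ is small, this plane contains a hole $R'$ off $l_{i_0}$ and off $c_1$. Projecting $\L$ from $R'$, every $g_i$ with $c_i=c_1$, together with $g_{i_0}$, projects to a line through the single point $\overline{c_1}$ of the quotient $\q(4,q)$ (for the $c_1$-fibre because $c_1\in g_i\cap R'^\perp$, and for $g_{i_0}$ because $\erz{R',l_{i_0}}=\erz{c_1,l_{i_0}}$ contains $c_1$). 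Since at most $\delta$ generators of $\L$ fail to project onto the regulus $\L^{R'}$, at most one regulus line can pass through $\overline{c_1}$, for two would contradict that a regulus consists of pairwise skew lines; hence the fibre $\{i:c_i=c_1\}$ has at most $\delta$ elements, and by symmetry so does every fibre.

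It remains to turn this fibre bound into a contradiction, which I expect to be the crux. The clean outcome is that some solid carries too many generators of $\L$: for $\q(2n,q)$ one has $t/s=1$, so Lemma~\ref{fivespace} applies as soon as a $4$-space contains more than $2+\delta$ generators of $\L$, and $\erz{\Sigma,c}$ is exactly such a $4$-space containing precisely the fibre over $c$. Thus if any fibre had more than $2+\delta$ elements we would conclude at once by Lemma~\ref{fivespace}; the difficulty is to exclude the remaining possibility that the $c_i$ split into many small fibres. I expect to handle this by combining the fibre bound above with a count of the covered points of $C$ and, if necessary, by repeating the projection argument from holes chosen in several of the cones $\erz{c,l_j}$, so as to force the $c_i$ into at most two fibres and hence into at most $2\delta<q+1$ generators in total, a contradiction. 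Once all $c_i$ are shown equal, the $g_i$ concur at $P=c_1$ and project from $P$ onto a regulus, which is the assertion.
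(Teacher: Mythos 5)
Your reduction of the lemma to the concurrence statement (all $c_i$ equal) is correct and elegant, but the one quantitative step you do carry out --- the fibre bound $|\{i:c_i=c_1\}|\le\delta$ --- is not valid, because it implicitly assumes that distinct generators in the fibre over $c_1$ project from $R'$ onto \emph{distinct} lines through $\overline{c_1}$, so that at most one of them can land on the unique regulus line of $\L^{R'}$ through $\overline{c_1}$ and all the others must be among the at most $\delta$ failures. In fact they all project onto one and the same line. Indeed, $R'$ lies in $\pi=\erz{c_1,l_{i_0}}$, hence on a unique line $\erz{c_1,z}$ with $z\in l_{i_0}$; let $m'$ be the line of the opposite regulus of $\Sigma\cap\q(6,q)$ through $z$. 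Since $c_1\in\Sigma^\perp$ is perpendicular to all of $\Sigma$, the plane $\erz{c_1,m'}$ is totally singular and contains $R'$. For every $i$ in the fibre one gets $g_i\cap R'^\perp=\erz{c_1,\,l_i\cap m'}\subset\erz{c_1,m'}$, so \emph{every} fibre generator projects from $R'$ onto the single line that is the image of the plane $\erz{c_1,m'}$. If that line is the regulus line of $\L^{R'}$ through $\overline{c_1}$ --- and nothing you wrote excludes this --- then none of the fibre generators ``fails'', and no bound on the fibre size follows. (There are also smaller unaddressed points: the hole $R'$ need not exist if $\erz{c_1,l_{i_0}}$ is itself an element of $\L$, and Lemma~\ref{fivespace} requires minimality of $\L$ and could a priori return the pencil-type example $\pi_1\q(2,q)$, which has to be ruled out by the standing hypothesis that every hole projects onto a regulus.)

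Independently of this, the proof is incomplete by your own account: the step that would turn a fibre bound into a contradiction is only conjectured. Note that both of your tools --- Lemma~\ref{fivespace} applied to the $4$-space $\erz{\Sigma,c}$, and the projection argument --- can only produce \emph{upper} bounds on fibre sizes, whereas the lemma is exactly a \emph{lower} bound (one fibre of size $q+1$); ``forcing the $c_i$ into at most two fibres'' by iterating upper-bound arguments is therefore circular as described. The paper's proof supplies precisely the missing mechanism by a different route: it takes a second hole $R'\notin\pi_3^\perp$ (with $\pi_3=\Sigma$), uses Lemma~\ref{le:proj:regulus} to obtain a \emph{second} regulus $\R_2$ of lines inside elements of $\L$, observes that at least $\tfrac{q+3}{2}$ planes of $\L$ carry a line of both $\R_1$ and $\R_2$, and then analyses the span $\erz{\R_1,\R_2}$ ($4$- or $5$-dimensional) to exhibit a cone point $P$ and to show that all $q+1$ planes of the cone on lines of $\R_1$ lie in $\L$. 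That interplay of two reguli (or some substitute for it) is what your argument still needs before it can be considered a proof.
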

\begin{proof}
Consider the hole $R$. By Lemma~\ref{le:proj:regulus}, $R^\perp$ contains a 
regulus $\R_1$ of $q+1$ lines $l_i$ contained in planes of $\L$. Denote the
$3$-dimensional space containing $\R_1$ by $\pi_3$. Consider any hole 
$R' \in \q(6,q) \setminus \pi_3^\perp$. By the assumption made before 
Lemma~\ref{le:blocking} and Lemma~\ref{le:proj:regulus}, $R'$ gives rise to a 
regulus $\R_2$ of $q+1$ lines contained in planes of $\L$. Since 
$R' \in \q(6,q) \setminus \pi_3^\perp$, $\R_1 \neq \R_2$. Hence, at least 
$\frac{q+3}{2}$ planes of $\L$ contain a line of both $\R_1$ and $\R_2$ and 
in at most one plane, the reguli $\R_1$ and $\R_2$ can share the same line. 
The reguli $\R_1$ and $\R_2$ define a $4$- or $5$-dimensional space $\Pi$. 

If $\Pi$ is $4$-dimensional, then $\Pi \cap \q(6,q) = \erz{P,\Q}$, for some
point $P$ and some hyperbolic quadric $\q^+(3,q)$, denoted by $\Q$. For $\Q$ we
may choose the hyperbolic quadric containing $\R_1$. There are at least
$\frac{q+1}{2}$ planes of $\q(6,q)$, completely contained in $\Pi$, containing a
line of $\R_1$ and a different line of $\R_2$. These planes are necessarily
planes of $\L$. Consider now a plane $\pi_2$ of $\q(6,q)$, completely contained
in $\Pi$, only containing a line of $\R_1$ and not containing a different line
of $\R_2$. If $\pi_2$ is not a plane of $\L$, it contains a hole $Q$. Then
$Q^\perp$ intersects the at least $\frac{q+1}{2}$ planes of $\L$ on $P$ in a
line, and the projection of these at least $\frac{q+1}{2}$ lines from
$Q$ is one line $l$. If this line $l$ belongs to $\L^Q$, then at least $q$ more
elements of $\L$ are projected from $Q$ on the $q$ other elements of $\L^Q$,
hence, $q+\frac{q+1}{2} \le q+1+\delta$, a contradiction with $\delta <
\frac{q-1}{2}$. Hence, $\pi_2$ is a plane of $\L$, and $\L$ contains $q+1$
generators of $\q(6,q)$ through $P$, which are projected from $P$ on a regulus.

If $\Pi$ is $5$-dimensional, then its intersection with $\q(6,q)$ is a cone
$P\Q$, $\Q$ a parabolic quadric $\q(4,q)$, or a hyperbolic quadric
$\q^+(5,q)$. If $\Pi \cap \q(6,q) = P\q(4,q)$, then the base $\Q$ can be
chosen in such a way that $\R_1 \subset \Q$. But then the same arguments as in
the case that $\Pi$ is $4$-dimensional apply, and the lemma follows. 

So assume
that $\Pi \cap \q(6,q) = \q^+(5,q)$. Consider again the at least $\frac{q+1}{2}$
planes $\pi^1 \ldots \pi^n$, of $\L$ containing a line of $\R_1$ and a different 
line of $\R_2$. Then half of these planes lie in the same equivalence class and
so intersect mutually in a point. We can assume that the two planes $\pi^1$ and
$\pi^2$ intersect in a point $P$, hence, $\langle \pi^1,\pi^2\rangle$ is a
$4$-dimensional space necessarily intersecting $\q(6,q)$ in a cone $P\Q$, $\Q$ a
hyperbolic quadric $\q^+(3,q)$. Clearly, since two different lines of $\R_1$
span $\langle \R_1 \rangle$ (and two different lines of $\R_2$ span $\langle
\R_2 \rangle$), the reguli $\R_1, \R_2 \subseteq \langle \pi^1,\pi^2\rangle$.
But since the planes $\pi^3 \ldots \pi^n$ contain a different line from $\R_1$
and $\R_2$, these at least $\frac{q+1}{2}$ planes of $\L$ are completely contained
in $\langle \pi^1,\pi^2 \rangle$. But then again the same arguments as in the
case that $\Pi$ is $4$-dimensional apply, and the lemma follows.
\end{proof}

From now on we assume that $n \geq 3$, and that there exists a hole $R$ such
that $\L^R$ has a non-trivial vertex $\alpha$. 
This means that also for $n=3$, this vertex is non-trivial. This assumption will
be in use for Lemmas~\ref{le:nicepoint}, \ref{le:bigvertex},
\ref{le:special_generator}, \ref{le:anja_argument}, and
Corollary~\ref{co:nice_in_vertex}. Remark that also the induction hypothesis is
used. We will call the $(n-2)$-dimensional subspace $\langle
R,\alpha\rangle$ the {\em vertex of $S_R$}.

A {\em nice point} is a point that lies in at least $q-\delta$ elements of $\L$.
In the next lemma, for $X$ a hole, we denote by $\bar{\L}^X$ the set of
generators of $\L$ that are projected from $X$ on the elements of $\L^X$. Hence,
the generators of $\bar{\L}^X$ intersect $X^\perp$ in $(n-2)$-dimensional
subspaces.

\begin{Le}\label{le:nicepoint}
Call $\alpha$ the vertex of $\L^R$. Then there exists a nice point $N$ on
every line through $R$ meeting $\alpha$. 
\end{Le}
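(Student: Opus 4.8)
We work in $\q(2n,q)$ with $(s,t)=(q,q)$, so a generator blocking set has size $|\L|=q+1+\delta$. Let me figure out what I need to prove.

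The setup: $R$ is a hole, $\L^R$ has a non-trivial vertex $\alpha$. The vertex of $S_R$ is $\langle R,\alpha\rangle$, an $(n-2)$-dimensional subspace. A "nice point" is a point lying in at least $q-\delta$ elements of $\L$. I need to show: on every line through $R$ meeting $\alpha$, there's a nice point.

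Let me think about the structure. $S_R = P\text{-}$cone... wait, $R$ is the hole. $R^\perp \cap \q(2n,q) = R\q(2n-2,q)$ by the perp formula. The generators of $\L$ each meet $R^\perp$ in an $(n-2)$-dimensional subspace (since $R$ is a hole, $R^\perp$ meets every generator of $\L$ in dimension $n-2$). Projecting from $R$ gives a generator blocking set $\L'$ in $\S_{n-1}=\q(2n-2,q)$, and $\L^R\subseteq\L'$ is the example, here a cone $\pi_{n-3}\q(2,q)$ or $\pi_{n-4}\q^+(3,q)$... no wait.

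For $\q(2n-2,q)$ (rank $n-1$), the examples in Table~\ref{tab:ex} are $\pi_{n-3}\q(2,q)$ (vertex $\pi_{n-3}$, dimension $n$) and $\pi_{n-4}\q^+(3,q)$ (vertex $\pi_{n-4}$). Since $\L^R$ has a non-trivial vertex, the vertex $\alpha$ is $(n-3)$- or $(n-4)$-dimensional in the quotient; lifted, $\langle R,\alpha\rangle$ has dimension $(n-3)+1+1 = n-1$? Let me recount: $\alpha$ lives in the quotient space of $R$, has dimension $n-3$ (for the $\q(2,q)$ example) so $\langle R,\alpha\rangle$ has dimension $n-2$. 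Good, matches "vertex of $S_R$ is $(n-2)$-dimensional."

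**Proof plan.**

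The plan is to fix a line $\ell$ through $R$ meeting $\alpha$ in a point, say $A$, and count incidences between $\ell$ and the generators of $\L$. Since $A\in\alpha$, the point $A$ lies in the vertex of $\L^R$, so every element of $\L^R$ passes through (the projection of) $A$; this means at least $q+1-\delta$ generators of $\L$ contain a subspace through $A$, hence contain a point of $\ell$ near $A$. First I would establish that every line $\ell=\langle R,A\rangle$ with $A\in\alpha$ meets many generators of $\L$: since the projection from $R$ sends the $\geq q+1$ relevant generators onto the elements of $\L^R$, and all elements of $\L^R$ pass through the vertex point corresponding to $A$, each such generator meets $\ell$ in a point. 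Thus $\ell$ carries at least $q+1-\delta$ of these intersection points (the generators not in $\bar\L^R$ number at most $\delta$).

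Next I would count: $\ell$ has $q+1$ points, one of which is $R$ (a hole, in no generator of $\L$). The remaining $q$ points must absorb the $\geq q+1-\delta$ incidences with generators of $\L$ coming from $\bar\L^R$, plus possibly more. By averaging over the $q$ points of $\ell\setminus\{R\}$, some point $N$ lies in many generators of $\L$. The key quantitative step is to show one such point lies in at least $q-\delta$ elements of $\L$, i.e.\ is nice. The naive pigeonhole gives roughly $(q+1-\delta)/q\approx 1$ generators per point, which is far too weak, so the real content must be that the incidences concentrate. The mechanism for concentration is that $\q(2n,q)$ contains no generator through $R$ off $R^\perp$, and two distinct generators of $\L$ meeting $\ell$ in the \emph{same} point $N$ interact strongly through $N^\perp$; I expect to argue that the generators meeting $\ell$ in points other than a single distinguished point are forced to be few, because otherwise they would produce a second example or violate the bound $|\L|=q+1+\delta$.

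**The main obstacle.** The hard part will be the concentration argument: upgrading the weak average "one generator per point" to a single point carrying $q-\delta$ generators. I expect this to hinge on the vertex structure---because $A\in\alpha$ and $\alpha$ is the common vertex of all elements of $\L^R$, the projected generators all share a large common subspace, and their preimages in $\L$ should, for points $N\neq R$ on $\ell$, either pile up on one point of $\ell$ or else spread out in a way that reconstructs a pencil/cover using $|\L|<q+s$ (the standard bound from Lemma~\ref{Qminus_basic}~(f) and its higher-rank analogues). Concretely, I would suppose no point of $\ell\setminus\{R\}$ is nice, derive that the $\geq q+1-\delta$ generators through points of $\ell$ are distributed with at most $q-\delta-1$ per point, and count the further points these generators cover (using that each generator off the vertex meets $\q(2n,q)$ in a full $(n-1)$-space) to force more than $q+1+\delta$ generators in $\L$, a contradiction. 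Verifying that this counting closes---that the "spread out" alternative really demands more generators than $\L$ has---is the delicate computation I would need to carry through carefully.
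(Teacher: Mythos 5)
Your setup is correct: since every element of $\L^R$ contains the vertex $\alpha$, each of the at least $q+1$ generators of $\bar{\L}^R$ meets the line $\ell=\erz{R,A}$ in a point other than $R$, and the remaining at most $\delta$ generators of $\L$ are irrelevant. But the entire content of the lemma is the concentration step, and there you have a genuine gap --- one you half-acknowledge when you write that verifying the counting closes is ``the delicate computation I would need to carry through carefully.'' Your proposed mechanism (assume no point of $\ell\setminus\{R\}$ is nice, then count points covered by the spread-out generators to force $|\L|>q+1+\delta$) cannot work as stated: $\L$ is only a generator blocking set, not a cover, so counting covered points gives no lower bound on $|\L|$; a spread-out distribution of intersection points along $\ell$ does not by itself demand any extra generators. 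Indeed, the regulus-type examples show that configurations in which generators meet a line in pairwise distinct points are perfectly consistent with $|\L|=q+1$, so no purely local count along $\ell$ can rule this out --- one must use the classification of the projection from a \emph{second} hole.

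That is exactly what the paper's proof does, and it is the idea missing from your sketch. The paper first runs a counting argument in the quotient geometry of $\ell$ (any $g\in\L$ not containing $\ell$ meets at most $\theta_{n-3}$ of the planes on $\ell$ in points outside $\ell$, while $\ell$ lies on $\theta_{2n-5}>\frac12|\L|\theta_{n-3}$ planes) to produce a plane $\pi\supset\ell$ met by at most one generator of $\L$ outside $\ell$; it then chooses a hole $Q\in\pi\setminus\ell$ avoiding that exceptional generator, and applies the induction hypothesis to $\L^Q$. The $q-\delta$ generators common to $\bar{\L}^R$ and $\bar{\L}^Q$ all meet $\ell$, and the case analysis on the structure of $\L^Q$ finishes: if $\L^Q$ is a cone with $(n-3)$-dimensional vertex over a conic, points in different elements of $\L^Q$ are collinear only inside the vertex, which forces all $q-\delta$ generators through one point of $\ell$ (a nice point); if $\L^Q$ is a cone over a regulus, then $\ell$ would project onto a line of the opposite regulus, forcing a generator of $\L$ to meet $\pi$ in a point of the line $QR$, contradicting the choice of $Q$. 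Without introducing this second hole and invoking the inductive classification of $\L^Q$, the concentration you need does not follow, so the proposal as it stands does not prove the lemma.
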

\begin{proof}
Let $l$ be a line on $R$ projecting to a point of $\alpha$, and
consider the planes of $\q(2n,q)$ on $l$. Consider any generator $g \in \L$.
Suppose that $g$ meets two planes $\pi^1$ and $\pi^2$ on $l$ in a line different
from $l$. Then in the quotient geometry of $l$, i.e. $l^\perp \cap \q(2n,q) =
\q(2n-4,q)$, the two planes $\pi^1$ and $\pi^2$ are two points contained in the
generator $l^\perp \cap g$, which is an $(n-3)$-dimensional 
subspace. Hence, any generator $g \in \L$ meets at most $\theta_{n-3}$ planes
through $l$ in a line different from $l$.

If $g$ meets two planes $\pi^1$ and $\pi^2$ on $l$ in only one point not on $l$,
then in the quotient geometry of $l$, the two planes $\pi^1$ and $\pi^2$ are
again two points contained in the generator $l^\perp \cap g$.
Hence, any generator $g \in \L$ meets at most $\theta_{n-3}$ planes 
through $l$ in exactly one point not on $l$. Finally, if a generator $g \in
\L$ meets a plane $\pi^1$ in a line different from $l$ and a plane $\pi^2$ in a
point not on $l$, then $g$ meets also $\pi^2$ in a line different from $l$,
since by the assumption, $g$ also contains a point of $l$.

Hence, for $g\in\L$, $l\not\subseteq g$ implies that $g$ can meet at most
$\theta_{n-3}$ of these planes in one or more points outside of $l$. As $l$ lies
in $\theta_{2n-5}\ge  \theta_{n-3}(q+1)>\frac12|{\cal L}|\theta_{n-3}$ planes of
$\q(2n,q)$, we can choose a plane $\pi$ on $l$ such that at most one generator
of $\cal L$ meets $\pi$ in a line different from $l$ or in exactly one point of
$\pi\setminus l$. Let $Q\in\pi\setminus l$ be on no generator of $\cal L$. Also,
if there is a generator in $\cal L$ meeting $\pi\setminus l$ in a single point $T$,
then choose $Q$ in such a way that this point $T$ does not lie on the line $QR$.

If the generator blocking set ${\cal L}^Q$ in the quotient of $Q$ has a
non-trivial vertex, then $\pi$ is  not a plane of this vertex, since otherwise
all the generators of $\bar{\cal L}^Q$  would meet $\pi$ in a line different from
$l$, but this is a contradiction with the choice of $\pi$. Since $\bar{\cal
L}^Q$ and $\bar{\cal L}^R$ share at least $q+1-\delta$ generators, then
$q+1-\delta$ generators of $\bar{\cal L}^Q$ meet $l$, and at most one of these
contains a point of $\pi\setminus l$. Hence, we find $q-\delta$ generators in
$\bar{\cal L}^Q \cap \bar{\cal L}^R$, each of them meeting $\pi$ in one point,
which is on $l$.

If the generators of $\bar{\cal L}^Q$ are projected from $Q$ on a generator
blocking set with an $(n-3)$-dimensional vertex (and base a conic $\q(2,q)$), then
points in different generators of ${\cal L}^Q$ are collinear only if they are in
the vertex of the cone. But the points of the $q-\delta$ generators on $l$ are
collinear after projection from $Q$. Hence, if two points of these $q-\delta$
generators on $l$ are different, then $l$ is projected from $Q$ on a line of the
vertex of $\L^Q$, so $\pi$ is a plane in the vertex of $S_Q$, a contradiction.
So the $q-\delta$ generators meeting $l$ in a point all meet $l$ in the same
point $X$, and we are done.

Now assume that the generators of $\bar{\cal L}^Q$ are projected from $Q$ on a
generator blocking set with an $(n-4)$-dimensional vertex, and base a regulus
$\R$. Assume that $l$ has no nice point, then at least two of the $q-\delta$
generators do not meet $l$ in a common point. Then $l$ is skew to the vertex of
the cone, since otherwise all the generators of $\bar{\cal L}^Q$  would meet
$\pi$ in a line different from $l$, but this is a contradiction with the choice
of $\pi$. Hence, $l$ is projected from the vertex of $S_Q$ on a line of the
regulus $\R$ or on a line of the opposite regulus $\R'$. But a line of $\R$
meets exactly one line of ${\cal L}^Q$, so $l$ must be projected from the vertex
of $S_Q$ on a line of the opposite regulus $\R'$. This means that each line of 
$\pi$ on $Q$ is met by a generator of $\bar{\cal L}^Q$ in a single point. 
This applies to the line $QR$, so some generator of $\cal L$ meets $\pi$ in a 
point, which lies on the line $QR$. This is a contradiction with the choice of 
$Q$ inside $\pi$.
\end{proof}

\begin{Co}\label{co:nice_in_vertex}
If $R$ is a hole and $N\in R^\perp$ a nice point, then $N$ lies in the vertex of 
$S_R$.
\end{Co}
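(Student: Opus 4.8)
The plan is to argue by contradiction. Suppose $N\in R^\perp$ is nice but $N\notin V_R$, where $V_R=\erz{R,\alpha}$ is the vertex of $S_R$ and $\alpha$ is the vertex of $\L^R$. Since $R$ is a hole and $N$ is nice, $N\neq R$, so I project everything from $R$ into the quotient polar space $\S_{n-1}$. Because $R$ lies on no generator of $\L$, every generator of $\L$ meets $R^\perp$ in an $(n-2)$-dimensional subspace (a generator inside $R^\perp\cap\S_{n}=R\S_{n-1}$ would contain $R$), so projection gives a surjection $p\colon\L\to\L'$ onto the induced generator blocking set $\L'$, which by the induction hypothesis contains the example $\L^R$ of $\S_{n-1}$ with vertex $\alpha$ and size $q+1$. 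The first thing I would record is that $N\notin V_R$ is equivalent to $\bar N\notin\alpha$, since a point of $R^\perp\setminus\{R\}$ projects into $\alpha$ exactly when it lies in $\erz{R,\alpha}$.

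Next I would count the generators of $\L$ through $N$ by pushing them through $p$. Each such generator contains $N\in R^\perp$, so it maps to a generator of $\L'$ through $\bar N$. Because $\L^R$ is a cone with vertex $\alpha$ over a conic $\q(2,q)$ or over a spread of $\q^+(3,q)$, and $\bar N\notin\alpha$, the projection of $\bar N$ from $\alpha$ lies on at most one element of the base (a point meets at most one base element in both cases), so $\bar N$ lies on at most one generator of $\L^R$. Writing $e':=|\L'|-|\L^R|=|\L'|-(q+1)\ge 0$ for the number of extra generators, at most $1+e'$ generators of $\L'$ pass through $\bar N$. Note this argument uses neither the precise dimension of $V_R$ nor which of the two examples occurs.

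The main obstacle is that $p$ need not be injective on the generators through $N$: several generators of $\L$ sharing an $(n-2)$-space inside one common generator of $\S_{n}$ through $R$ can all collapse onto the single example-generator through $\bar N$, so one cannot directly transfer the bound $1+e'$ from $\L'$ to $\L$. I would resolve this by global fibre accounting. The total multiplicity excess is $\sum_{\bar g\in\L'}(|p^{-1}(\bar g)|-1)=|\L|-|\L'|=(q+1+\delta)-(q+1+e')=\delta-e'$. Hence the number of generators of $\L$ through $N$ is at most $\sum_{\bar g\ni\bar N}|p^{-1}(\bar g)|\le(1+e')+(\delta-e')=1+\delta$, the two occurrences of $e'$ cancelling, which is exactly the device that makes the estimate tight.

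Finally, since $N$ is nice it lies on at least $q-\delta$ generators of $\L$, so $q-\delta\le 1+\delta$, i.e. $\delta\ge\tfrac{q-1}{2}$, contradicting $\delta<\delta_0\le\tfrac{q-1}{2}$. Therefore $N\in V_R$. I expect the only delicate points to be the two structural inputs feeding the count, namely the equivalence $N\in V_R\Leftrightarrow\bar N\in\alpha$ and the fact that $\bar N\notin\alpha$ admits at most one example-generator through $\bar N$; once these are in place, the cancellation of $e'$ is what upgrades a naive bound of order $q/3$ into one valid across the full range $\delta<\tfrac{q-1}{2}$.
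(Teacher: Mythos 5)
Your proof is correct, and its skeleton is the same as the paper's: project from the hole $R$, invoke the induction hypothesis to obtain the cone $\L^R$ with vertex $\alpha$ inside the projected set $\L'$, and play the number of generators of $\L$ through the nice point $N$ against the fact that a point outside $\alpha$ lies on at most one element of $\L^R$. Where you genuinely differ is in the counting, and your version is sharper. The paper's proof is two lines: at least $q-2\delta\ge 2$ of the generators of $\L$ through $N$ project onto elements of $\L^R$, and since two elements of $\L^R$ meet only in points of the vertex of $S_R$, the point $N$ must lie in that vertex. This tacitly assumes that two of those generators have \emph{distinct} projections --- exactly the non-injectivity issue you flag. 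With only the crude per-fibre bound (each element of $\L'$ has at most $\delta-e'+1$ preimages under $p$), distinctness is forced only when $q-2\delta>\delta+1$, i.e. $\delta<\frac{q-1}{3}$; in the range $\frac{q-1}{3}\le\delta<\frac{q-1}{2}$ the paper's argument as written does not exclude that all of these generators collapse onto a single element of $\L^R$. Your global excess accounting, $\sum_{\bar g\in\L'}\bigl(|p^{-1}(\bar g)|-1\bigr)=\delta-e'$, combined with the bound $1+e'$ on the number of elements of $\L'$ through $\bar N$, closes precisely this loophole: the two occurrences of $e'$ cancel, giving at most $1+\delta$ generators of $\L$ through any point projecting outside $\alpha$, hence the contradiction $q-\delta\le 1+\delta$ for the full range $\delta<\frac{q-1}{2}$. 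In short: same route, but your fibre bookkeeping supplies a step the paper elides, and that step is needed for the statement to hold under the stated hypothesis on $\delta$ rather than only for $\delta<\frac{q-1}{3}$.
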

\begin{proof}
A nice point lies in at least $q-\delta$ generators of $\L$ and at least $q-2\delta \geq
2$ if these must belong to $\L^R$. As two elements of $\L^R$ necessarily meet in
a point of the vertex of $S_R$, the assertion follows.
\end{proof}

\begin{Le}\label{le:bigvertex}
Let $n \ge 4$. If $\beta$ denotes the subspace generated by all nice points,
then $\dim(\beta)\ge n-3$. 
\end{Le}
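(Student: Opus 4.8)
The plan is to extract, from a single suitable hole $R$, a family of nice points that already spans a subspace of dimension $n-3$; since $\beta$ contains the span of any collection of nice points, this is all that is needed, and no second hole has to be invoked.

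First I would fix a hole $R$ of the type guaranteed by the standing assumption of this subsection, so that $\L^R$ has a non-trivial vertex $\alpha$ and the vertex $\erz{R,\alpha}$ of $S_R$ is $(n-2)$-dimensional. By this convention $\alpha$ is then an $(n-3)$-dimensional subspace, a positive dimension since $n\ge 4$. Because $R\notin\erz{\alpha}$, the lines through $R$ meeting $\alpha$ are exactly the lines $\erz{R,X}$ with $X$ a point of $\alpha$, and each of them lies in the totally singular subspace $\erz{R,\alpha}\subseteq R^\perp$.

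Next I would apply Lemma~\ref{le:nicepoint}, which places a nice point $N_X$ on each line $\erz{R,X}$. Since $R$ is a hole it lies on no generator of $\L$ and hence is not nice, so $N_X\neq R$; therefore $N_X$ projects from $R$ precisely onto the point $X$ of $\alpha$ (and, consistently with Corollary~\ref{co:nice_in_vertex}, $N_X$ lies in the vertex $\erz{R,\alpha}$ of $S_R$). Letting $X$ run through all points of $\alpha$, the projections from $R$ of the nice points $N_X$ cover all points of $\alpha$, and hence span the whole $(n-3)$-dimensional space $\alpha$.

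Finally I would close with a dimension count: projection from the single point $R$ lowers the projective dimension of a subspace by at most one, so a set of points whose projections from $R$ span $\alpha$ must itself span a subspace of dimension at least $\dim\alpha=n-3$. Thus $\erz{N_X:X\in\alpha}$ has dimension at least $n-3$, and as every $N_X$ is a nice point we get $\beta\supseteq\erz{N_X:X\in\alpha}$, whence $\dim(\beta)\ge n-3$. The only points requiring care are this elementary linear-algebra step and the observation that each $N_X$ is genuinely distinct from $R$, so that the $N_X$ really do project onto all of $\alpha$; everything else is an immediate consequence of the two cited results.
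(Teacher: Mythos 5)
Your argument breaks down at its very first step: you assert that the standing assumption lets you choose a hole $R$ for which the vertex $\alpha$ of $\L^R$ is $(n-3)$-dimensional. Neither the standing assumption nor the induction hypothesis gives this. The induction hypothesis only guarantees that $\L^R$ contains one of the examples of Table~\ref{tab:ex} for $\q(2n-2,q)$, and that table has two types: a cone with vertex $\pi_{n-3}$ and base a conic $\q(2,q)$, but also a cone with vertex $\pi_{n-4}$ and base a spread of $\q^+(3,q)$. In the second case $\dim\alpha=n-4$, the lines $\erz{R,X}$, $X\in\alpha$, yield nice points spanning only a subspace of dimension at least $n-4$, and your proof stops one short of the claim. (The phrase ``the $(n-2)$-dimensional subspace $\erz{R,\alpha}$'' in the paragraph introducing the vertex of $S_R$ is loose writing, not a hypothesis you may invoke: the proof of Lemma~\ref{le:nicepoint} itself distinguishes the cases of an $(n-3)$-dimensional and an $(n-4)$-dimensional vertex, and Lemma~\ref{le:special_generator} needs precisely the present lemma in order to manufacture a hole whose $S_R$ really has an $(n-2)$-dimensional vertex. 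Assuming that dimension here is therefore circular.)

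Everything else in your write-up is correct and coincides with the paper's first step: nice points exist on the lines $\erz{R,X}$ by Lemma~\ref{le:nicepoint}, they differ from $R$ because a hole lies on no element of $\L$, and projection from the point $R$ drops the dimension of a span by at most one. But in the bad case this only yields $\dim(\beta)\ge n-4$. The missing idea is a second hole: if $\gamma$ denotes the span of all nice points and $\dim(\gamma)=n-4$, then $\dim(\gamma^\perp)=n+3<2n$, so there is a hole $P\notin\gamma^\perp$, i.e.\ $\gamma\not\subseteq P^\perp$. Running your argument at $P$ produces nice points lying in $P^\perp$ whose span has dimension at least $n-4$; this span is contained in $\beta=\gamma$, yet it cannot equal $\gamma$ because it lies in $P^\perp$ and $\gamma$ does not. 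This contradiction rules out $\dim(\gamma)=n-4$ and gives $\dim(\beta)\ge n-3$.
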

\begin{proof}
Suppose that $R$ is a hole. If $n \ge 4$, then by the induction hypothesis,
the vertex of $\L^R$ has dimension at least $n-4$. Hence, using
Lemma~\ref{le:nicepoint}, the nice points generate a subspace $\gamma$ of
dimension at least $n-4$. Suppose that $\dim(\gamma)=n-4$, then
$\dim(\gamma^\perp)=n+3 < 2n$, and so we find a hole $P \not \in \gamma^\perp$.
Consider this hole $P$, then the same argument gives us a subspace $\gamma'$
spanned by nice points in $P^\perp$ of dimension at least $n-4$, different from
$\gamma$. So $\dim(\beta)\ge n-3$.
\end{proof}

\begin{Le}\label{le:special_generator}
There exists a hole $R$ and a generator $g$ on the vertex of $S_R$ such that $g$
meets exactly one element of $\L$ in an $(n-2)$-dimensional subspace and such
that all other elements of $\L$ do not meet $g$ or meet $g$ only in points of
the vertex of $S_R$.
\end{Le}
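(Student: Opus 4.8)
The plan is to exploit the cone structure of $V^\perp\cap\q(2n,q)$, where $V:=\langle R,\alpha\rangle$ is the $(n-2)$-dimensional vertex of $S_R$ for the hole $R$ furnished by the standing assumption. Since $V$ is an $(n-2)$-dimensional subspace contained in the polar space, we have $V^\perp\cap\q(2n,q)=V\q(2,q)$, a cone over a conic, so there are exactly $q+1$ generators $g_0,\dots,g_q$ of $\q(2n,q)$ through $V$; they are precisely the generators of this cone, and $g_i\cap g_j=V$ for $i\neq j$. As $R\in V$ is a hole, no element of $\L$ contains $V$, so none of the $g_j$ lies in $\L$. I would take the required generator $g$ to be one of these $g_j$, selected by a counting argument.

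The geometric heart of the argument is the local observation that each $h\in\L$ meets at most one of the $g_j$ in a point outside $V$. Indeed, $h\subseteq\q(2n,q)$ forces $h\cap V^\perp\subseteq V^\perp\cap\q(2n,q)=V\q(2,q)$; projecting this intersection from $V$ into the plane $V^\perp/V$, its image lies on the conic $\q(2,q)$. Since a conic contains no line, the image is a single point $P_j$ (or is empty), so $h\cap V^\perp$ is contained in the single generator $g_j=\langle V,P_j\rangle$. Consequently $h\cap g_k\subseteq g_j\cap g_k=V$ for every $k\neq j$, which proves the claim; for $k\neq j$ such an intersection is a proper subspace of $V$, since no element of $\L$ contains $V$, and in particular is not $(n-2)$-dimensional.

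Next I would pin down the behaviour of the elements $\bar\L^R$ of $\L$ that project from $R$ onto the pencil $\L^R$ through $\alpha$. For $h\in\bar\L^R$ the $(n-2)$-dimensional space $R^\perp\cap h$ projects to a generator of the pencil through $\alpha$, so its lift $\langle R,R^\perp\cap h\rangle$ is an $(n-1)$-dimensional totally isotropic subspace containing $V$, hence equals one of the $g_j$; call it $g_{\sigma(h)}$. Since $g_{\sigma(h)}\supseteq V\ni R$ gives $g_{\sigma(h)}\subseteq R^\perp$, one gets $h\cap g_{\sigma(h)}=h\cap R^\perp=R^\perp\cap h$, an $(n-2)$-dimensional subspace not contained in $V$ (else $V\subseteq h$). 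Thus each $h\in\bar\L^R$ meets $g_{\sigma(h)}$ in an $(n-2)$-dimensional subspace lying partly outside $V$, and since $\L^R$ is the full pencil of $q+1$ generators through $\alpha$, the map $\sigma$ is onto $\{g_0,\dots,g_q\}$.

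Finally I would count. For each $j$ let $N_j$ be the set of $h\in\L$ meeting $g_j$ in a point outside $V$. By the local observation the sets $N_j$ are pairwise disjoint, so $\sum_{j}|N_j|\le|\L|=q+1+\delta$; by surjectivity of $\sigma$ each $N_j$ contains an element of $\bar\L^R$, so $|N_j|\ge1$ for all $q+1$ indices. Hence $\sum_j(|N_j|-1)\le\delta$, and since $\delta<\tfrac{q-1}{2}$ at least $q+1-\delta\ge1$ of the generators satisfy $|N_j|=1$. Fix such a $g_j$. Its unique element $h$ of $N_j$ must be the element of $\bar\L^R$ guaranteed above, so $h$ meets $g_j$ in an $(n-2)$-dimensional subspace; every other element of $\L$ lies outside $N_j$ and therefore meets $g_j$ only in points of $V$, that is, only in the vertex of $S_R$, or not at all, and (no element containing $V$) never in an $(n-2)$-dimensional subspace. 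Taking $g:=g_j$ completes the proof. I expect the main obstacle to be the local observation of the second paragraph: recognising that $h\cap V^\perp$ is forced into a single generator through $V$ by the absence of lines on the base conic. Once this is in place, the identification of $\bar\L^R$ and the final counting are routine.
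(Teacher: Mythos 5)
Your closing argument is sound, and it is essentially the paper's own final step: once one has a hole $R$ for which the vertex $V$ of $S_R$ is $(n-2)$-dimensional, then $V^\perp\cap\q(2n,q)=V\q(2,q)$ is a cone over a conic, each element of $\L$ meets at most one of the $q+1$ generators through $V$ in points outside $V$ (your projection-onto-the-conic argument is a clean, arguably cleaner, way of seeing what the paper deduces from collinearity inside the cone $\pi_{n-3}\q(2,q)$), the elements of $\L$ projecting onto the pencil $\L^R$ hit all $q+1$ of these generators, and the pigeonhole count with $|\L|=q+1+\delta$, $\delta<\frac{q-1}{2}$, produces the desired generator $g$.

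The genuine gap is in your opening line: ``$V:=\langle R,\alpha\rangle$ is the $(n-2)$-dimensional vertex of $S_R$ for the hole $R$ furnished by the standing assumption.'' The standing assumption only provides a hole $R$ such that $\L^R$ has a \emph{non-trivial} (i.e.\ non-empty) vertex $\alpha$. For $n=3$ this does force $\L^R$ to be a pencil, since the only alternative in $\q(4,q)$ is a regulus, whose vertex is empty; so for $n=3$ your proof is complete. But for $n\ge 4$ the induction hypothesis allows $\L^R$ to be the other example of Table~\ref{tab:ex}, namely the cone $\pi_{n-4}\q^+(3,q)$ over a regulus, whose vertex is non-empty (hence non-trivial) but only $(n-4)$-dimensional. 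In that case $V=\langle R,\alpha\rangle$ has dimension $n-3$, the quotient $V^\perp\cap\q(2n,q)$ is a cone over $\q(4,q)$ rather than over a conic, and your key observation collapses: $\q(4,q)$ contains lines, so nothing prevents an element of $\L$ from meeting several generators through $V$ in points outside $V$, and the lifts of the elements of $\L^R$ no longer exhaust the generators through $V$. This is exactly the difficulty that the paper's Lemmas~\ref{le:nicepoint} and~\ref{le:bigvertex} and Corollary~\ref{co:nice_in_vertex} are designed to overcome: for $n\ge 4$ one takes an $(n-3)$-dimensional subspace $\gamma$ spanned by nice points and a hole $R\in\gamma^\perp$; since nice points in $R^\perp$ must lie in the vertex of $S_R$, this manufactures a hole whose $S_R$ has vertex of dimension exactly $n-2$, i.e.\ forces the pencil case, after which your argument applies verbatim. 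Without some substitute for that step, your proof establishes the lemma only for $n=3$.
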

\begin{proof}
First let $n=3$. By the assumption, there exists a hole $R$ such that $\L^R$ has
a non-trivial vertex, which is a point $X$. So the vertex of $S_R$ is the line
$RX$ and has dimension $n-2$.

Now let $n\geq 4$. By Lemma~\ref{le:bigvertex}, we find a subspace $\gamma$ of
dimension $n-3$ spanned by nice points. Consider a hole $R \in \gamma^\perp$.
Clearly, the vertex of $S_R$ will be spanned by the projection of $\gamma$ from
$R$ and $R$, so has dimension $n-2$. 

So for $n\geq 3$, we always find a hole $R$ such that the vertex $V$ of $S_R$ has
dimension $n-2$, and $V = \langle R,\pi_{n-3}\rangle$, $\pi_{n-3}$ the vertex of
$\L^R$. As $\L^R$ consists of the $q+1$ generators of a cone $\alpha \q(2,q)$,
points in different elements of $\L^R$ are collinear only when they are
contained in $\pi_{n-3}$. So the projection from $R$ of any $(n-2)$-dimensional
intersection $\pi_i$ of an element $\L$ and $S_R$, meets at most one element of
$\L^R$ outside of the vertex $\pi_{n-3}$. Hence, before projection, no element
of $\L$ meets two generators of $\q(2n,q)$ on $V$ in points outside of $V$.
Also, at least $q+1$ elements of $\L$ meet $S_R$ in an $(n-2)$-dimensional
subspace that is projected from $R$ on an element of $\L^R$. So at most $\delta$
elements of $\L$ can meet a generator on $V$ in points outside of $V$, and thus
we find a generator of $\q(2n,q)$ on $V$ only meeting elements of $\L$ in points
of $V$.
\end{proof}

\begin{Le}\label{le:anja_argument}
Let $n\geq 3$. There exists an $(n-3)$-dimensional subspace contained in at
least $q$ elements of $\L$.
\end{Le}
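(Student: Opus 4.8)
The plan is to reduce to the configuration produced by Lemma~\ref{le:special_generator} and then to locate the required $(n-3)$-subspace inside the vertex $V$ of $S_R$. So first I fix a hole $R$ for which the vertex $V=\erz{R,\alpha}$ of $S_R$ has dimension $n-2$, together with the generator $g\supseteq V$ furnished by Lemma~\ref{le:special_generator}, which meets exactly one element $g_1\in\L$ in an $(n-2)$-space and meets every other element of $\L$ only in points of $V$. Recall that at least $q+1$ elements of $\L$ lie in $\bar{\L}^R$, i.e. meet $R^\perp$ in an $(n-2)$-space projecting onto a generator of $\L^R=\alpha\q(2,q)$; for each such $g_i$ the intersection $\beta_i:=g_i\cap V$ is an $(n-3)$-space not containing $R$ (a hyperplane of $V$ avoiding $R$), and these $\beta_i$ are exactly the subspaces that I want to show are mostly equal, since in the target example $\pi_{n-2}\q(2,q)$ they all coincide with a fixed $(n-3)$-space $R^\perp\cap\pi_{n-2}$.

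Second, I would pin down the candidate subspace. By Lemma~\ref{le:nicepoint} and Corollary~\ref{co:nice_in_vertex} every nice point of $R^\perp$ lies in $V$, and for $n\ge4$ Lemma~\ref{le:bigvertex} supplies enough nice points to span an $(n-3)$-subspace $\sigma\subseteq V$; for $n=3$ the subspace $\sigma$ is the single nice point on $V=\erz{R,\alpha}$. Since any element of $\L$ through all the chosen nice points then contains $\sigma$ and lies in $g_i\cap V=\beta_i$, which also has dimension $n-3$, such an element satisfies $\beta_i=\sigma$. For $n=3$ this already places $\sigma$ in at least $q-\delta$ elements of $\L$, and an analogous lower bound holds for $n\ge4$ through the vertex structure of Lemma~\ref{le:bigvertex}.

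Third, and this is the heart of the matter, I must improve the bound from $q-\delta$ to $q$. A bare pigeonhole argument over the hyperplanes $\beta_i$ of $V$ only yields concentration of order $q-O(\delta)$, so the smallness of $\L$ by itself is insufficient and the special generator $g$ must be exploited. Here I would choose a second hole $R'\in g\setminus\bigl(V\cup(g\cap g_1)\bigr)$ (such holes fill most of $g$), set $\ell=\erz{R,R'}\subseteq g$, and use $V\subseteq R'^\perp$ (valid since $g$ is totally singular) to obtain, for every $g_i\in\bar{\L}^R\cap\bar{\L}^{R'}$, the equalities $\beta_i=g_i\cap V=g_i\cap\ell^\perp$. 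Because $g$ meets every element of $\L$ except $g_1$ inside $V$, the at most $\delta$ elements that could carry a deviating $\beta_i$ are tightly constrained; combining the data coming from $R$ and from several such second holes $R'$ on $g$ should force all but at most one admissible $\beta_i$ to equal $\sigma$, whence $\sigma\subseteq g_i$ for at least $q$ indices $i$.

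The main obstacle is precisely this final concentration step: the straightforward count is off by a multiple of $\delta$, so one must extract genuine rigidity from the special generator $g$ (the fact that it meets only one element of $\L$ outside $V$) to annihilate the $O(\delta)$ deficient elements and reach the exact bound $q$. I expect the bookkeeping to require treating separately the exceptional element $g_1$ and the at most $\delta$ elements of $\L$ outside $\bar{\L}^R$, and to use the collinearity $V\subseteq R'^\perp$ repeatedly as $R'$ ranges over the holes of $g$.
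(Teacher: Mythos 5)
Your setup coincides with the paper's: you start from the hole $R$ and the special generator $g\supseteq V$ of Lemma~\ref{le:special_generator}, and you pass to a second hole on $g\setminus V$. But the proof has a genuine gap, and you have located it yourself: the ``final concentration step'' that you call the main obstacle is precisely the content of the lemma, and it is never carried out. What you actually establish is (at best) an $(n-3)$-space lying in $q-\delta$ elements of $\L$, and your device for removing the $\delta$-loss --- combining data from several holes $R'$ on $g$ --- is a hope, not an argument. Your intermediate step is also shaky for $n\ge 4$: Lemma~\ref{le:bigvertex} only says that \emph{all} nice points together span a space of dimension at least $n-3$; distinct nice points may lie on quite different subsets of $\L$, so the number of elements of $\L$ containing $n-2$ chosen nice points can only be bounded below by $|\L|-(n-2)(1+2\delta)$, which is nowhere near $q-\delta$ as $n$ grows. (Also, your claimed equality $g_i\cap V=g_i\cap\ell^\perp$ need not hold: $g_i\cap\ell^\perp$ can be $(n-2)$-dimensional.)

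The paper avoids all such counting by applying the induction hypothesis to the projection from the second hole $Q\in g\setminus V$ and exploiting the classification of $\L^Q$. Since at least $q-\delta$ of the $g_i$ meet both $S_R$ and $S_Q$ in $(n-2)$-spaces, the projection $\tilde g_1$ of $g_1$ from $Q$ --- which coincides with the projection of $g$, because $g_1\cap Q^\perp=g_1\cap g$ --- meets many elements of $\L^Q$ in $(n-3)$-spaces. If $\L^Q$ were a cone $\pi_{n-4}\R$ over a regulus, then $\tilde g_1$ would be a generator of that cone not belonging to $\L^Q$, hence completely covered by the elements of $\L^Q$; in particular the projection of $R$ would be covered, forcing the line $\langle R,Q\rangle\subseteq g$ to meet some element of $\L\setminus\{g_1\}$. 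That is impossible, since such elements meet $g$ only inside $V$ and $\langle R,Q\rangle\cap V=\{R\}$ with $R$ a hole. Therefore $\L^Q=\pi'_{n-3}\q(2,q)$, and $\tilde g_1\in\L^Q$, whence the $(n-2)$-space $\langle Q,\pi'_{n-3}\rangle$ lies inside $g$. Now each of the remaining $q$ elements of $\L^Q$ is the projection of an element of $\L\setminus\{g_1\}$ meeting $\langle Q,\pi'_{n-3}\rangle$ in an $(n-3)$-space; by the choice of $g$ these intersections lie in $V$, hence all of them equal the single $(n-3)$-space $V\cap\langle Q,\pi'_{n-3}\rangle$. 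This yields exactly $q$ elements of $\L$ through one $(n-3)$-space with no $\delta$-loss, because the count runs over the $q+1$ generators of the cone $\L^Q$ rather than over $\L$ itself. This dichotomy-plus-vertex argument is the idea missing from your proposal.
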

\begin{proof}
Consider the special hole $R$ from Lemma~\ref{le:special_generator}. Call
again $V=\erz{R,\pi_{n-3}}$ the vertex of $S_R$, with $\pi_{n-3}$ the
vertex of $\L^R$. Denote the elements of $\L$ intersecting $S_R$ in an
$(n-2)$-dimensional subspace by $g_i$. By Lemma~\ref{le:special_generator}, we
find a generator $g$ on $V$ intersected by a unique element $g_1$ of $\L$ in an
$(n-2)$-dimensional subspace, and intersected by further elements $g_i$ of $\L$
in at most $(n-3)$-dimensional subspaces contained in $V$. So we find a hole $Q
\neq P$, $Q \in g \setminus V$.  

Clearly, at least $q-\delta$ elements of $\L$ that meet $S_R$ in an
$(n-2)$-dimensional subspace, also meet $S_Q$ in an $(n-2)$-dimensional 
subspace and are projected on elements of $\L^Q$. Consider now the hole $Q$, and
suppose that $\L^Q$ is a cone $\pi_{n-4}\R$, $\R$ a regulus. The generator
$g_1$ is projected from $Q$ on a subspace $\tilde{g}_1$ not in $\L^Q$,
since $\tilde{g}_1$ meets at least $q-\delta$ of the projected spaces $g_i$, $i
\neq 1$, in an $(n-3)$-dimensional space, which has larger dimension than the
vertex of $\L^Q$.  But $\tilde{g}_1$ lies in $\pi_{n-4}\R$, since it intersects
at least $q-\delta$ spaces $g_i$ in an $(n-3)$-dimensional subspace. Hence,
$\tilde{g}_1$ meets the $q+1$ elements of $\L^Q$ in different $(n-3)$-spaces and
is completely covered. So the projection of $R$ from $Q$ is covered by elements
of $\L^Q$, and hence, the line $l=\langle R,Q\rangle$ must meet an element of
$\L \setminus\{g_1\}$, a contradiction. So $\L^Q$ is a cone $\pi'_{n-3}\q(2,q)$. 

It follows that $\tilde{g}_1 \in \L^Q$, so $\pi'_{n-3} \subset
\tilde{g}_1$, and $g_1$ and $V$ are projected from $Q$ on $\tilde{g}_1$.
Before projection from $R$, the elements $g_i$ meet $V$ in $(n-3)$-dimensional
subspaces contained in $V$.

The subspace $\pi'_{n-3}$ lies in the projection from $Q$ of elements of $\L$
meeting $\langle \pi'_{n-3},Q\rangle$ in an $(n-3)$-dimensional subspace. But
the choice of $g$ implies that there is only a unique element of $\L$ meeting
$\langle \pi'_{n-3},Q\rangle$ in an $(n-3)$-dimensional subspace and in points
outside of $V$ (the element meeting $g$ in $g_1$), so, at least $q$ other
elements of $\L$ intersect $V$ in the same $(n-3)$-dimensional subspace. 
\end{proof}

The following lemma summarizes in fact Lemmas~\ref{le:nicepoint},
\ref{le:bigvertex}~and~\ref{le:special_generator}, \ref{le:anja_argument}, and
Corollary~\ref{co:nice_in_vertex}. The condition on $\delta$ enables the use of
the induction hypothesis.

\begin{Le}\label{le:general3}
Let $n \geq 3$. Suppose that $\L$ is a minimal generator blocking set of size
$q+1+\delta$ of $\q(2n,q)$, $\delta \leq \delta_0$. If there exists a hole $R$
that projects $\L$ on a generator blocking set containing a minimal generator
blocking set of $\q(2n-2,q)$ that has a non-trivial vertex, then $\L$ is a
generator blocking set of $\q(2n,q)$ listed in Table~\ref{tab:ex}.
\end{Le}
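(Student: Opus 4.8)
The plan is to feed the output of Lemma~\ref{le:anja_argument} into the general-position criterion of Lemma~\ref{fivespace}. By Lemma~\ref{le:anja_argument} there is an $(n-3)$-dimensional subspace $\pi_{n-3}$ contained in at least $q$ elements of $\L$. Since here $\frac{t}{s}=1$, Lemma~\ref{fivespace} will finish the proof the moment we exhibit an $(n+1)$-dimensional subspace $\Pi$ containing more than $\frac{t}{s}+1+\delta=2+\delta$ of these generators; because $\delta\le\delta_0\le\frac{q-1}{2}$ gives $q>2+\delta$ (away from the smallest $q$), collecting the $\geq q$ generators through $\pi_{n-3}$ in one $(n+1)$-space is more than enough. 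Thus the whole content of the lemma is the production of $\Pi$.

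First I would pass to the residual geometry. Every generator through $\pi_{n-3}$ is totally singular and contains $\pi_{n-3}$, hence lies in $\pi_{n-3}^\perp$, and $\pi_{n-3}^\perp\cap\q(2n,q)=\pi_{n-3}\q(4,q)$ is a cone over a $\q(4,q)$ of order $(q,q)$ spanning a $\pg(4,q)$. Quotienting by $\pi_{n-3}$ turns the $\geq q$ generators into $\geq q$ lines of this residual $\q(4,q)$. An $(n+1)$-dimensional $\Pi$ holding (most of) them exists precisely when these lines lie in a common solid $\pg(3,q)$ of $\pg(4,q)$: then $\Pi=\erz{\pi_{n-3},\mbox{solid}}$ has dimension $(n-3)+3+1=n+1$, and $\Pi\cap\q(2n,q)$ equals $\pi_{n-2}\q(2,q)$ or $\pi_{n-3}\q^+(3,q)$, exactly the two cones appearing in Table~\ref{tab:ex} for $\q(2n,q)$.

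To force this coplanarity I would use the hole $R$ from the hypothesis. By Lemma~\ref{le:special_generator} the projection $\L^R$ is the pencil $\alpha\q(2,q)$ whose vertex $\alpha$ is the image of $\pi_{n-3}$, the vertex of $S_R$ is $V=\erz{R,\pi_{n-3}}$, and the special generator $g$ on $V$ meets all but one of the $\geq q$ elements exactly in $\pi_{n-3}$ (they meet $V$ only in $\pi_{n-3}$). In the residual $\q(4,q)$ this says that all but one of the $\geq q$ lines are skew to the fixed line $\overline{g}$ and miss the point $\overline{V}$ (the common image of $R$), while the pencil structure of $\L^R$ dictates how they meet the tangent cone $\overline{V}^\perp\cap\q(4,q)$. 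If some two of the $\geq q$ generators meet only in $\pi_{n-3}$, their span $\Pi_0$ is an $(n+1)$-space with $\Pi_0\cap\q(2n,q)=\pi_{n-3}\q^+(3,q)$; if instead every two share an $(n-2)$-space, they are concurrent in the residual and already generate the pencil cone $\pi_{n-2}\q(2,q)$. In the first case I would choose a second hole outside $\Pi_0^\perp$ and apply Lemma~\ref{fourspace} in the manner of Lemma~\ref{sixspace} to trap all but at most $\delta$ of the remaining generators through $\pi_{n-3}$ inside $\Pi_0$, leaving $\geq q-\delta>2+\delta$ generators in $\Pi_0$, whereupon Lemma~\ref{fivespace} identifies $\L$.

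The hard part is exactly this coplanarity step: excluding the possibility that the $\geq q$ residual lines of $\q(4,q)$ span the entire $\pg(4,q)$ instead of a solid. The two-hole hyperplane-trapping argument is the natural tool, but it has to be arranged so that the two associated spaces $S_R$ and $S_Q$ intersect in the right dimension and so that the line $\overline{g}$ / point $\overline{V}$ data survives projection from the second hole, mirroring the delicate dimension bookkeeping of Lemmas~\ref{sixspace} and \ref{le:q7q}. Once the solid is pinned down, what remains is the routine verification of $q-\delta>2+\delta$ under $\delta\le\delta_0$, together with the small-$q$ bookkeeping already absorbed into $\delta_0=\min\{\frac{q-1}{2},\epsilon\}$.
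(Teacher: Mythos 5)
Your skeleton coincides with the paper's: Lemma~\ref{le:anja_argument} supplies an $(n-3)$-space $\alpha$ (your $\pi_{n-3}$) lying in at least $q$ elements of $\L$, and Lemma~\ref{fivespace} finishes once more than $\frac{t}{s}+1+\delta=2+\delta$ generators of $\L$ are caught in a single $(n+1)$-dimensional subspace. But the step you yourself label ``the hard part'' --- actually exhibiting that subspace --- is the entire content of the paper's proof, and what you offer in its place (a case split on whether two of the residual lines of $\q(4,q)$ are skew or concurrent, followed by a two-hole trapping argument ``in the manner of Lemma~\ref{sixspace}'' whose dimension bookkeeping you explicitly leave unarranged) is a plan, not a proof. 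It is also not clear the plan would succeed as stated: Lemma~\ref{fourspace} bounds how many generators of $\L$ meet $S_P$ in an $(n-2)$-space outside a chosen hyperplane of $S_P$; it does not by itself force the remaining generators through $\alpha$ into the particular space $\Pi_0$ spanned by two of them, which is the stronger coplanarity statement your route requires.

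The idea you are missing is shorter and needs neither the residual case analysis nor Lemma~\ref{le:special_generator}. Take any hole $H\notin\alpha^\perp$. Since $H$ is a hole, every $g\in\L$ meets $H^\perp$ in exactly an $(n-2)$-space, and every $g\in\L$ through $\alpha$ satisfies $g=\langle \alpha, g\cap H^\perp\rangle$, because $\alpha\cap H^\perp$ is only $(n-4)$-dimensional. Now $|\L^H|=q+1$ and $|\L|=q+1+\delta$, so at most $\delta$ elements of $\L$ fail to meet $S_H$ in an $(n-2)$-space; hence at least $q-\delta$ of the generators through $\alpha$ have $g\cap H^\perp\subseteq S_H\cap\alpha^\perp$ (the containment in $\alpha^\perp$ is automatic, as $g$ is totally singular and contains $\alpha$). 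Since $H\in S_H$ but $H\notin\alpha^\perp$, the space $S_H\cap\alpha^\perp$ is a proper subspace of the $(n+1)$-dimensional $S_H$, so it has dimension at most $n$, and it contains the $(n-4)$-dimensional $\alpha\cap H^\perp$. Therefore
\[
\dim\,\langle \alpha, S_H\cap\alpha^\perp\rangle \;\le\; (n-3)+n-(n-4)\;=\;n+1,
\]
and this single $(n+1)$-space contains all $q-\delta$ of those generators. As $q-\delta>\delta+2$ under the hypothesis on $\delta$, Lemma~\ref{fivespace} applies at once. So the gap in your proposal is precisely this one-hole span argument; without it, the proof is incomplete.
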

\begin{proof}
By Lemma~\ref{le:anja_argument}, we can find an $(n-3)$-dimensional subspace
$\alpha$ of $\q(2n,q)$ that is contained in at least $q$ elements of $\L$.
Consider now a hole $H \not \in \alpha^\perp$. Then $H^\perp \cap \alpha^\perp$
is an $(n+1)$-dimensional space containing at least $q-\delta$ intersections of
$H^\perp$ with elements of $\L$ on $\alpha$ through the $(n-4)$-dimensional
subspace $H^\perp \cap \alpha$. Since $S_H$ is $(n+1)$-dimensional, these
$q-\delta$ $(n-2)$-dimensional subspaces lie in the $n$-dimensional space $S_H
\cap \alpha^\perp$. Hence, we find in the $(n+1)$-dimensional space $\langle
\alpha,\S_H\cap \alpha^\perp \rangle$ at least $q-\delta > \delta +2$ elements
of $\L$. Lemma~\ref{fivespace} assures that $\L$ is one of the generator
blocking sets of $\q(2n,q)$ listed in Table~\ref{tab:ex}.
\end{proof}

Finally, we can prove Theorem~\ref{th:rankn} (a).

\begin{Le}\label{co:q2nhigh}
Theorem~\ref{th:rankn} (a) is true for $\q(2n,q)$, $n\geq 3$.
\end{Le}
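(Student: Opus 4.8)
The plan is to prove the statement by induction on $n$, routing the two possible behaviours of the projection $\L^R$ from a hole $R$ into the two structural lemmas already established. First I would reduce to the case that $\L$ is minimal: if $\L$ is not minimal, discard inessential generators to obtain a minimal generator blocking set $\L'\subseteq\L$ of size $q+1+\delta'$ with $\delta'\le\delta<\delta_0$; any example of Table~\ref{tab:ex} contained in $\L'$ is automatically contained in $\L$, so it suffices to treat minimal $\L$. The base case $n=2$ of the induction is exactly Proposition~\ref{pro:q4q}: the bound $\delta<\epsilon$ keeps $|\L|$ below the size of the smallest non-trivial blocking set of $\pg(2,q)$ and forces $\L$ to be a pencil or a regulus, the two rank-$2$ entries of Table~\ref{tab:ex}.

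For the inductive step ($n\ge 3$) I would first note that $|\L|=q+1+\delta$ is far below the size of any cover of $\q(2n,q)$, so a hole $R$ exists and the induced generator blocking set $\L^R$ in the quotient polar space $\q(2n-2,q)$ is defined. By the induction hypothesis $\L^R$ contains one of the two examples of $\q(2n-2,q)$, namely $\pi_{n-3}\q(2,q)$ with vertex $\pi_{n-3}$, or $\pi_{n-4}\q^+(3,q)$ with vertex $\pi_{n-4}$. The decisive dichotomy is whether this example carries a non-trivial vertex.

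For $n\ge 4$ both vertices have dimension at least $0$ (since $n-4\ge 0$), so $\L^R$ always contains a minimal generator blocking set of $\q(2n-2,q)$ with a non-trivial vertex; Lemma~\ref{le:general3} then concludes directly that $\L$ is one of the examples of $\q(2n,q)$ in Table~\ref{tab:ex}. The only genuinely separate situation is $n=3$, where the second example degenerates to a regulus with trivial vertex $\pi_{-1}$. Here I would split according to the holes: if some hole $R$ projects $\L$ onto an example with a non-trivial vertex (a pencil), Lemma~\ref{le:general3} again finishes; if instead every hole projects $\L$ onto a regulus, then Lemma~\ref{le:q6qregulus} applies and produces $q+1$ generators of $\q(6,q)$ through a common point $P$ projecting to a regulus, which is precisely the cone example $\pi_0\q^+(3,q)$ for $\q(6,q)$.

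The main obstacle is not any single estimate --- all of those have been absorbed into the preceding lemmas --- but organising the case distinction so that it is both exhaustive and correctly matched to the two lemmas. One must verify by a clean dimension count that for $n\ge 4$ both table examples really do carry a non-trivial vertex, and check that for $n=3$ the two options ``some hole gives a non-trivial vertex'' and ``every hole gives a regulus'' exhaust all possibilities and route respectively to Lemma~\ref{le:general3} and Lemma~\ref{le:q6qregulus}. I would also be careful that the induction hypothesis is invoked at the correct threshold $\delta_0$, so that the smaller defect $\delta'$ obtained after passing to a minimal subset still satisfies the hypotheses required by both lemmas.
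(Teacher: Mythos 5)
Your proof is correct and takes essentially the same route as the paper's: induction on $n$ with Proposition~\ref{pro:q4q} as the base, the $n=3$ case split according to whether some hole projects $\L$ onto an example with a non-trivial vertex (Lemma~\ref{le:general3}) or every hole yields a regulus (Lemma~\ref{le:q6qregulus}), and Lemma~\ref{le:general3} alone for $n\ge 4$ since both quotient examples then carry a non-trivial vertex. Your explicit reduction to a minimal $\L$ and the remark that holes exist are details the paper leaves implicit, but they are consistent with its argument.
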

\begin{proof}
Proposition~\ref{pro:q4q} assures that the assumptions of either
Lemma~\ref{le:q6qregulus} or Lemma~\ref{le:general3}, $n=3$ are true. Hence, 
Theorem~\ref{th:rankn} (a) follows for $n=3$. But then the assumption
of Lemma~\ref{le:general3} is true for $\q(2n,q)$ and $n=4$, and then
Theorem~\ref{th:rankn} (a) follows by induction.
\end{proof}

\section{Remarks}

We mentioned already that a maximal partial spread is in fact a special generator
blocking set. The results of Theorem~\ref{th:rankn} imply an improvement of the
lower bound on the size of maximal partial spreads in the polar spaces
$\q^-(2n+1,q)$, $\q(2n,q)$, and $\h(2n,q^2)$ when the rank is at least $3$.
In Table \ref{tab:spreads}, we summarize the known lower bounds on the size of
small maximal partial spreads of polar spaces. The results for $\q^+(2n+1,q)$,
$\w(2n+1,q)$ and $\h(2n+1,q^2)$ are proved in \cite{KMS}.

\begin{table}
\begin{center}
\begin{tabular}{|c|l|}
\hline
Polar space & Lower bound\\
\hline
$\q^-(2n+1,q)$ & $n\geq 3: q^2+\frac{1}{2}(3q-\sqrt{5q^2+2q+1})$\\
\hline
$\q^+(4n+3,q)$ & $n \geq 1$, $q\geq 7: 2q+1$\\
\hline
$\q(2n,q)$ & $n\geq 3: q+1+\delta_0$, with
$\delta_0=\mathrm{min}\{\frac{q-1}{2}, \epsilon\}$, \\
& $\epsilon$ such that $q+1+\epsilon$ is the size of the smallest non-trivial \\
& blocking set in $\pg(2,q)$.\\
\hline
$\w(2n+1,q)$ & $n\geq 2$, $q\geq 5: 2q+1$\\
\hline
$\h(2n,q^2)$ & $n\geq 3: q^3+q-2$\\
\hline
$\h(2n+1,q^2)$ & $q\geq 13$ and $n\geq 2: 2q+3$\\
\hline

\end{tabular}
\caption{Bounds on the size of small maximal partial spreads}\label{tab:spreads}
\end{center}
\end{table}

One can wonder what happens with generator blocking sets of the polar spaces
$\q^+(2n+1,q)$, $\w(2n+1,q)$, $q$ odd, and $\h(2n+1,q^2)$. Unfortunately, the
approach presented in Section~\ref{sec:rank2} for these polar spaces, fails,
which makes the completely approach of this paper not usable for these polar
spaces in higher rank.

In \cite{BSS:2010}, an overview of the size of the smallest non-trivial blocking
sets of $\pg(2,q)$ is given. When $q$ is a prime, then $\epsilon =
\frac{q+1}{2}$. So when $q$ is a prime, the condition on $\delta$ in the case of
generator blocking sets of $\q(2n,q)$, $n \ge 3$, drops to $\delta <
\frac{q-1}{2}$.

\section*{Acknowledgements}
The research of the first author was also supported by a postdoctoral research
contract on the research project {\em Incidence Geometry} of the Special Research
Fund ({\em Bijzonder Onderzoeksfonds}) of Ghent University. The research of the
second author was supported by a research grant of the Research Council of Ghent
University. The first and second author thank the Research Foundation Flanders
(Belgium) (FWO) for a travel grant and thank Klaus Metsch for his hospitality
during their stay at the Mathematisches Institut of the Universit\"at Gie\ss{}en.

\end{document}